\newcommand\abs[1]{\left\lvert#1\right\rvert}
\newcommand\ceil[1]{\left\lceil#1\right\rceil}
\newcommand\floor[1]{\left\lfloor#1\right\rfloor}
\DeclareMathOperator\forb{forb}
\DeclareMathOperator\Avoid{Avoid}
\DeclareMathOperator\ext{ext}
\renewcommand*\env@matrix[1][*\c@MaxMatrixCols c]{%
  \hskip -\arraycolsep
  \let\@ifnextchar\new@ifnextchar
  \array{#1}}
\newtheorem{thm}{Theorem}[section]
\newtheorem{lemma}[thm]{Lemma}
\newtheorem{prop}[thm]{Proposition}
\newtheorem{cor}[thm]{Corollary}
\newtheorem{definition}[thm]{Definition}
\newcommand{\ncols}[1]{| #1 |}
\title{Multivalued forbidden numbers of two-rowed configurations -- the missing cases}
\date{}
\author{Wallace Peaslee\thanks{Supported by a UK Engineering and Physical Sciences Research Council Doctoral Training Partnership grant.}  \\University of Cambridge\\Department of Applied Mathematics and Theoretical Physics\\Cambridge, United Kingdom
\and Attila Sali\thanks{Research was partially supported by the National Research, Development and Innovation Office (NKFIH) grants K--132696 and SNN-135643. }   \\HUN-REN Alfr\'ed R\'enyi Institute of Mathematics and\\Department of Computer Science, BUTE\\ Budapest, Hungary
\and Jun Yan\thanks{Supported by the Warwick Mathematics Institute Centre for Doctoral Training, and by funding from the UK EPSRC (Grant number: EP/W523793/1).}  \\University of Warwick\\ Mathematics Institute\\ Coventry, United Kingdom}
\begin{document}
\maketitle{}
\begin{abstract}
    The present paper considers extremal combinatorics questions in the language of matrices. An $s$-matrix is a matrix with entries in $\{0,1,\ldots, s-1\}$. An $s$-matrix is simple if it has no repeated columns. A matrix $F$ is a configuration in a matrix $A$, denoted $F\prec A$, if it is a row/column permutation of a submatrix of $A$. $\Avoid(m,s,F)$ is the set of $m$-rowed, simple $s$-matrices not containing a configuration of $F$ and $\forb(m,s, F)=\max\{|A|\colon A \in \Avoid(m,s,F)\}$. Dillon and Sali initiated the systematic study of $\forb(m,s, F)$ for $2$-matrices $F$, and computed $\forb(m,s, F)$ for all 2-rowed $F$ when $s>3$. In this paper we tackle the remaining cases when $s=3$. In particular, we determine the asymptotics of $\forb(m,3,p\cdot K_2)-\forb(m,3,p\cdot I_2)$ for $p>3$, where $K_2$ is the $2\times 4$ simple $2$-matrix and $I_2$ is the $2\times 2$ identity matrix, as well as the exact values of $\forb(m,3,F)$ for many 2-rowed $2$-matrices $F$.
    \newline
    \textbf{Keywords:} Forbidden configurations, (0,1)- matrices, $s$-matrices, extremal set theory 
\end{abstract}

\section{Introduction}
Several questions of extremal combinatorics can be conveniently expressed in the language of $(0,1)$-matrices. An matrix $A$ is said to be \emph{simple} if it contains no repeated columns. There is a natural correspondence between columns of an $m$-rowed $(0,1)$-matrix and subsets of $[m]$. We consider an extremal set theory problem in matrix terminology as follows. Let $\ncols{A}$ be the number of columns of $A$. For a given matrix $F$, we say $F$ is a \emph{configuration} in $A$, denoted $F\prec A$, if there is a submatrix of $A$ which can be obtained by permuting the rows and columns of $F$. 
Define 
$$\Avoid(m,F)=\left\{A\colon A \hbox{ is an }m\hbox{-rowed }(0,1)\hbox{-matrix and } F\not\prec A\right\},$$ 
$$\forb(m,F)=\max\{\ncols{A}:A\in\Avoid(m,F)\}.$$
We say a matrix $A\in\Avoid(m,F)$ is {\emph{extremal}} if $\ncols{A}=\forb(m,F)$, and let 
$$\ext(m,F)=\{A\in\Avoid(m,F)\colon\ncols{A}=\forb(m,F)\}.$$ 

The foremost theorem in forbidden configurations is Theorem~\ref{thm:sauer} below. Here, $K_k$ denotes the $k\times 2^k$ matrix containing all possible $0,1$-columns.
\begin{thm}\label{thm:sauer}\cite{sauer,shelah,vapnik}
    \[
    \forb(m,K_k)=\binom{m}{k-1}+\binom{m}{k-2}+\cdots+\binom{m}{0}.
    \]
\end{thm}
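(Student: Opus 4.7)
The plan is to prove the bound in two parts. For the lower bound, take $A$ to be the simple $m$-rowed $(0,1)$-matrix whose columns are exactly the characteristic vectors of the subsets of $[m]$ of size strictly less than $k$; this matrix has $\sum_{i=0}^{k-1}\binom{m}{i}$ columns, and it avoids $K_k$ because any occurrence of $K_k$ on a $k$-row subset $S$ would force $A$ to contain the all-ones pattern on $S$, i.e.\ a column of weight at least $k$, contradicting the construction.

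For the upper bound, let $A\in\Avoid(m,K_k)$ and split the columns of $A$ according to the entry in the last row. Stripping that row yields two simple $(m-1)$-rowed families of columns $A_0$ and $A_1$; set $B=A_0\cup A_1$ and $C=A_0\cap A_1$, so that $\ncols{A}=\ncols{B}+\ncols{C}$. Clearly $B$ is a simple $(m-1)$-rowed matrix avoiding $K_k$. The heart of the argument is the observation that $C$ avoids $K_{k-1}$: any occurrence of $K_{k-1}$ in $C$ on some $(k-1)$-subset of rows $S$ would, by definition of $C$, lift to an occurrence of $K_k$ in $A$ on the rows $S\cup\{m\}$, since every column pattern produced appears in $A$ with both a $0$ and a $1$ in the last row. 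This yields the recursion
\[
\forb(m,K_k)\le\forb(m-1,K_k)+\forb(m-1,K_{k-1}),
\]
and an induction on $m+k$ using Pascal's rule together with the trivial base cases $\forb(m,K_1)=1$ (avoiding $K_1$ forces every row to be constant, hence simplicity gives at most one column) and $\forb(m,K_k)=2^m=\sum_{i=0}^m\binom{m}{i}$ whenever $m<k$ (no $k$-row subset exists) closes the argument.

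Since the theorem is classical, there is no genuine obstacle; the only point requiring care is the bookkeeping around the block $C$. One must check that $C$ is still simple — immediate from the simplicity of $A$ — and that the lift of $K_{k-1}\prec C$ to $K_k\prec A$ is valid, in the sense that each column of $C$ restricted to $S$ genuinely appears in $A$ with both choices of last-row entry, so that the $2^{k-1}$ patterns of $K_{k-1}$ on $S$ double to the $2^k$ patterns of $K_k$ on $S\cup\{m\}$. Once this is stated cleanly, the induction runs on autopilot.
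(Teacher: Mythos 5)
The paper states this theorem only with citations to Sauer, Shelah, and Vapnik--Chervonenkis and does not include a proof, so there is nothing internal to compare against. Your argument is a correct and standard proof of the Sauer--Shelah lemma: the lower bound via the matrix of all characteristic vectors of sets of size below $k$ is fine (the absence of any column of weight at least $k$ already rules out $K_k$, since $K_k$ contains the all-ones column on its $k$ rows), and the inductive upper bound via the decomposition into $B=A_0\cup A_1$ and $C=A_0\cap A_1$, together with the observation that $B$ avoids $K_k$ and $C$ avoids $K_{k-1}$, gives the recursion $\forb(m,K_k)\leq\forb(m-1,K_k)+\forb(m-1,K_{k-1})$ whose solution you correctly identify via Pascal's rule and the two base cases. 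The proposal is complete and correct.
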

Further interesting results and research problems can be found in the survey paper \cite{survey}. However, our main interest in the present paper is the generalization of these concepts to $s$-matrices, which have entries in $\{0,1,\ldots, s-1\}$. Such matrices can be thought of as $s$-coloured set systems or as representations of collections of functions from a given finite set into $\{0,1,\dots,s-1\}$.
The concepts of \emph{simple} matrices and \emph{configurations} naturally extend. For a given collection $\mathcal F$ of matrices, we denote by $\Avoid(m,s,{\mathcal F})$ the collection of $m$-rowed, simple $s$-matrices that avoid every matrix $F \in \mathcal F$. The main extremal function in the study of forbidden configurations is
\[
    \forb(m,s,{\mathcal F})=\max\{|A|\colon A \in \Avoid(m,s,{\mathcal F})\},
\]
which we call the \emph{forbidden number}, and we similarly define 
$$\ext(m,s,F)=\{A\in\Avoid(m,s,F)\colon\ncols{A}=\forb(m,s,F)\}.$$ 

Alon \cite{alon1983} and Steele \cite{steele1978existence} gave a generalization of Theorem~\ref{thm:sauer} to 
$s$-matrices, with the bound given being an exponential function of $m$. This is a special case of the following more general phenomenon proved by F\"uredi and Sali \cite{FS}. A matrix is called an \emph{$(i,j)$-matrix} if its entries are all from the set $\{i,j\}$.

\begin{thm}
Let ${\mathcal F}$ be a family of $s$-matrices. If for every pair of distinct $i,j\in\{0,1,\dots,s-1\}$ there is an $(i,j)$-matrix in ${\mathcal F}$, then $\forb(m,s,{\mathcal F}) = O(m^k)$ for some positive integer $k$. If ${\mathcal F}$ has no $(i,j)$-matrix for some distinct $i,j \in \{0,1,\dots,s-1\}$, then $\forb(m,s,{\mathcal F}) = \Omega(2^m)$.
\end{thm}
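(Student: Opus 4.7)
\emph{Lower bound.} If $\mathcal{F}$ contains no $(i,j)$-matrix for some distinct $i,j\in\{0,\ldots,s-1\}$, I would exhibit the $m\times 2^m$ simple matrix $B$ whose columns enumerate all of $\{i,j\}^m$. Every submatrix of $B$ has entries only in $\{i,j\}$, while each $F\in\mathcal F$ carries some entry outside $\{i,j\}$, so no $F\in\mathcal F$ is a configuration of $B$. Hence $B\in\Avoid(m,s,\mathcal F)$ and $\forb(m,s,\mathcal F)\ge 2^m$.

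\emph{Upper bound, reduction.} For the polynomial direction, for each pair $\{i,j\}$ pick an $(i,j)$-matrix $F_{ij}\in\mathcal F$, and take $k$ large enough that $F_{ij}\prec K_k^{(i,j)}$ for every pair, where $K_k^{(i,j)}$ denotes the $k\times 2^k$ simple $(i,j)$-matrix whose columns enumerate $\{i,j\}^k$. Such $k$ exists: if $F_{ij}$ has $k_{ij}$ rows and each distinct column appears with multiplicity at most $\mu_{ij}$, then $k\ge k_{ij}+\lceil\log_2\mu_{ij}\rceil$ suffices, since restricting $K_k^{(i,j)}$ to any $k_{ij}$ of its rows realizes every pattern in $\{i,j\}^{k_{ij}}$ exactly $2^{k-k_{ij}}$ times. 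Consequently every $A\in\Avoid(m,s,\mathcal F)$ also avoids each $K_k^{(i,j)}$, and it suffices to show $\forb\bigl(m,s,\{K_k^{(i,j)}\}_{i<j}\bigr)=O(m^{g(s,k)})$ for some polynomial $g$.

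\emph{Upper bound, polynomial bound via generalized shattering.} I would call $S\subseteq[m]$ \emph{pair-shattered} by $A$ if $K_{|S|}^{(i,j)}\prec A|_S$ for some pair $\{i,j\}$; the reduced hypothesis becomes that no $k$-subset is pair-shattered. I would prove $|A|=O(m^{g(s,k)})$ by induction on $s$. The base case $s=2$ is exactly Theorem~\ref{thm:sauer}, since pair-shattering then coincides with ordinary VC-shattering and the only pair is $\{0,1\}$. For the inductive step, fix a row $r$, partition the columns of $A$ by their value at $r$, and delete row $r$ from each class to obtain $(m-1)$-rowed simple $s$-matrices still avoiding every $K_k^{(i,j)}$. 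The plan is to combine these classes cleverly using pair-avoidance between values to descend to an $(s-1)$-value problem at only polynomial cost.

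\emph{Main obstacle.} The bottleneck is closing this induction without an exponential blowup: a blind partition by row value already costs a factor $s$ per row and yields only an $s^m$ bound. Extracting polynomial growth requires exploiting pair-avoidance \emph{between} value classes---which controls how two classes can jointly pair-shatter the remaining rows---to amalgamate classes at $r$ with only polynomial loss, in spirit similar to the shifting/compression argument behind the classical Sauer--Shelah proof. An alternative, possibly cleaner, route is to relate pair-shattering dimension to Natarajan dimension (absorbing a Ramsey-type overhead that is harmless for constant $s$) and then invoke Natarajan's multiclass VC bound directly. Either way, the crux is the amalgamation/extraction step that converts $s$-ary pair-avoidance into the correct polynomial growth rate.
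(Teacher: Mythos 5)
The paper does not prove this theorem but cites it as a result of F\"uredi and Sali \cite{FS}, so there is no in-paper argument to compare against; I assess your proposal on its own merits. Your lower bound is correct: if $\mathcal{F}$ has no $(i,j)$-matrix, then every $F\in\mathcal{F}$ has an entry outside $\{i,j\}$, so the $m\times 2^m$ simple matrix whose columns enumerate $\{i,j\}^m$ avoids every $F\in\mathcal F$. Your reduction to the family $\{K_k^{(i,j)}\}_{i<j}$ is also correct and is the standard first step in this type of argument.

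The genuine gap is in the polynomial upper bound, and you flag it yourself: neither of your two proposed routes is actually carried out. The induction on $s$ as described (partition columns by their value at a fixed row) does not close without a substantively new idea, for precisely the exponential-blowup reason you name. Your alternative route via Natarajan dimension does work, and the missing step is a short pigeonhole argument: if a row set $S$ is Natarajan-shattered by pointwise-distinct witness functions $f_0,f_1$, then some unordered pair $\{i,j\}$ satisfies $\{f_0(x),f_1(x)\}=\{i,j\}$ on a subset $S'\subseteq S$ with $|S'|\geq |S|/\binom{s}{2}$, and $S'$ is then pair-shattered in your sense (for $T'\subseteq S'$ apply the Natarajan witness at $T=(T'\cap f_0^{-1}(i))\cup((S'\setminus T')\cap f_0^{-1}(j))$). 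Thus the absence of a pair-shattered $k$-set forces Natarajan dimension at most $(k-1)\binom{s}{2}$, and Natarajan's multiclass Sauer--Shelah bound then gives $|A|=O\bigl(m^{(k-1)\binom{s}{2}}\bigr)$ for fixed $s$. As submitted, without this (or an equivalent) argument spelled out, the proposal leaves the harder half of the theorem unproved.
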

Several papers \cite{anstee2023multivalued,ansteelu,ELLIS202024} considered special sets of forbidden configurations for which the forbidden numbers are of polynomial magnitudes. The systematic investigation of exact exponential bounds when forbidding $(0,1)$-configurations was started in \cite{dillon2021exponential}. They provided exact formulas for the forbidden numbers $\forb(m,s,F)$ of many $(0,1)$-matrices $F$, including all $2$-rowed matrices when $s > 3$. Along the way, they exposed some interesting qualitative differences between the cases $s=2$, $s = 3$, and $s > 3$. 

Some important matrices include $I_k$, the $k\times k$ identity matrix  and $K_k$, the $k\times 2^k$ matrix of all possible (0,1)-columns on $k$ rows. We use the following notation for general 2-rowed configurations: $$F(a,b,c,d)=\left[\begin{array}{@{}c@{}}\\ \\ \end{array}\right. \overbrace{
\begin{array}{@{}c}00\cdots 0\\ 00\cdots 0\\ \end{array}}^a \overbrace{
\begin{array}{c}11\cdots 1\\ 00\cdots 0\\ \end{array}}^b \overbrace{
\begin{array}{c}00\cdots 0\\ 11\cdots 1\\\end{array}}^c 
\overbrace{
\begin{array}{c@{}}11\cdots 1\\ 11\cdots 1\\\end{array}}^d \left.\begin{array}{@{}c@{}}\\ \\ \end{array}\right].$$ 
For example, $F(1,1,1,1)=K_2$. Let $p\cdot F$ denote the concatenation of $p$ copies of $F$, so that in particular $p\cdot K_2$ and $F(p,p,p,p)$ are the same as configurations, while $p\cdot I_2$ is the same as $F(0,p,p,0)$.

When $s>3$, the forbidden numbers $\forb(m,s,F(a,b,c,d))$ for all 2-rowed configurations were determined exactly in \cite{dillon2021exponential}. Under mild conditions similar to Lemma \ref{reduction} below, it was shown that $\forb(m,s,F(a,b,c,d))=\forb(m,s,F(a,p,p,d))$, where $p=\min\{b,c\}$. In particular, it turned out that $\forb(m,s,p\cdot K_2)=\forb(m,s,p\cdot I_2)$, which implies that $\forb(m,s,p\cdot K_2)=\forb(m,s,F(a,p,p,d)$ if $s>3$ and $a,d\le p$.

For general 2-rowed configurations $F(a,b,c,d)$, the case when $s=3$ is harder than when $s>3$. In particular, it was shown in \cite{dillon2021exponential} that $\forb(m,3,p\cdot K_2)>\forb(m,3,p\cdot I_2)$ if $p\ge 3$, exhibiting a different behaviour to the $s>3$ case. More generally, $\forb(m,3,F(a,b,c,d))$ were determined in \cite{dillon2021exponential} only when $\max\{a,d\}\ge\min\{b,c\}$. Our goal in this paper is to study the missing cases when $\max\{a,d\}<\min\{b,c\}$. 

The following important reduction lemma was proved in \cite{dillon2021exponential}.
\begin{lemma}\cite{dillon2021exponential}\label{reduction}
Let $p=\min\{b,c\}$. If $2^{m-2}\geq(\max\{a,b,c,d\}-1)m^2$ and $p\geq1$, then
\[\forb(m,3,F(a,b,c,d))=\forb(m,3,F(a,p,p,d)).\]
\end{lemma}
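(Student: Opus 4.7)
The plan is to prove the two inequalities separately. The easier direction is immediate: since $p = \min\{b,c\}$, at least one of $b, c$ equals $p$ and the other is at least $p$, so $F(a,p,p,d)$ appears as a column-submatrix of $F(a,b,c,d)$, giving $F(a,p,p,d) \prec F(a,b,c,d)$. Hence any matrix avoiding $F(a,p,p,d)$ automatically avoids $F(a,b,c,d)$, so $\Avoid(m,3,F(a,p,p,d)) \subseteq \Avoid(m,3,F(a,b,c,d))$ and therefore $\forb(m,3,F(a,p,p,d)) \leq \forb(m,3,F(a,b,c,d))$.

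For the reverse inequality, we would assume without loss of generality that $b \leq c$, so $p = b$, and take $A \in \ext(m,3,F(a,b,c,d))$. It suffices to produce a simple 3-matrix $A' \in \Avoid(m,3,F(a,b,b,d))$ with the same number of columns. If $A$ itself already avoids $F(a,b,b,d)$ we are done; otherwise $F(a,b,b,d) \prec A$ is witnessed by some row pair $(r_1,r_2)$ where the counts $\alpha,\beta,\gamma,\delta$ of the $(0,1)$-column types $\binom{0}{0},\binom{1}{0},\binom{0}{1},\binom{1}{1}$ satisfy $\alpha\geq a$, $\beta\geq b$, $\gamma\geq b$, and $\delta\geq d$. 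Avoidance of $F(a,b,c,d)$ forces $\gamma\leq c-1$, so only a bounded excess of $\binom{0}{1}$-type columns causes the trouble.

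The main step is an exchange argument: for each such bad row pair $(r_1,r_2)$, locate a column $\sigma$ of $A$ whose restriction to rows $(r_1,r_2)$ is $\binom{0}{1}$ and promote its row-$r_1$ entry from $0$ to $2$, producing a column with $(r_1,r_2)$-projection $\binom{2}{1}$ that is irrelevant to both forbidden configurations. The modified matrix has the same number of columns, and iterating such swaps over all bad row pairs should produce the desired $A'$. The numerical hypothesis $2^{m-2}\geq(\max\{a,b,c,d\}-1)m^2$ is intended to enter through a union/counting argument: the total number of row pairs that support a configuration dangerously close to a violation of $F(a,b,c,d)$ is at most $\binom{m}{2}\cdot O(\max\{a,b,c,d\})$, which is dominated by the $\Omega(2^m)$ simple columns available in $A$ (for instance, the 3-matrix consisting of all columns over $\{0,2\}$ automatically lies in $\Avoid(m,3,F(a,b,c,d))$ whenever $p\geq 1$, giving the requisite exponential lower bound on $\forb(m,3,F(a,b,c,d))$). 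This leaves enough room to choose a safe swap that neither collides with an existing column nor creates a new violation.

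The main obstacle is the detailed bookkeeping in this exchange step: a single modification of one entry alters the $(0,1)$-projection of $A$ on all $m-1$ row pairs involving $r_1$, so one must simultaneously ensure that (i) the new column does not coincide with an existing column of $A$ (preserving simpleness), (ii) no row pair involving $r_1$ becomes a new witness for $F(a,b,c,d)$, and (iii) the procedure makes strict progress, terminating with no bad row pair remaining. Calibrating the iteration so that these three conditions are jointly maintained under the hypothesis $2^{m-2}\geq(\max\{a,b,c,d\}-1)m^2$ is the technical heart of the proof; the exponent $m-2$ and the quadratic factor $m^2$ correspond precisely to the number of affected row pairs times the maximum allowed tightness per pair, so the inequality is the natural threshold beyond which the exchange argument has slack to succeed.
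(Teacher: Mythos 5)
Your first paragraph is correct: $F(a,p,p,d)\prec F(a,b,c,d)$ gives $\forb(m,3,F(a,p,p,d))\leq\forb(m,3,F(a,b,c,d))$ directly, and this is a clean observation. Note, though, that the paper itself does not prove Lemma~\ref{reduction}; it is cited from Dillon--Sali \cite{dillon2021exponential}, so there is no in-paper proof to compare against. Given that all of the present paper's machinery (the standard decomposition of Definition~\ref{def:edge-nonedge}, the transitivity argument of Lemma~\ref{transitive}, and the role of the $2^{m-3}$ threshold) is structural, it is very likely that the original proof of this reduction is also structural in flavour, proceeding by a decomposition/counting argument rather than by the local exchange you propose.

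The reverse inequality $\forb(m,3,F(a,b,c,d))\leq\forb(m,3,F(a,p,p,d))$ is where your proposal has a genuine gap: it is a sketch of an exchange argument whose two load-bearing steps are exactly the ones you defer. First, the simplicity/collision issue is real and not resolved. When you promote a $0$ in row $r_1$ of a column $\sigma$ to $2$, the resulting column may already be present in $A$, and nothing in your proposal rules out that \emph{every} candidate modification for a given bad row pair collides. Your appeal to the exponential lower bound $\forb(m,3,F(a,b,c,d))=\Omega(2^m)$ is not to the point: having many columns available in the ambient space does not prevent the finitely many specific companion columns from all sitting inside $A$. Second, the termination claim (``iterating such swaps over all bad row pairs should produce the desired $A'$'') is asserted, not proved; a single bad pair may require up to $c-p$ modifications, and each modification must individually be collision-free, so you need a per-step argument that a safe choice exists, not just a global count. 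Finally, the hypothesis $2^{m-2}\geq(\max\{a,b,c,d\}-1)m^2$ is invoked only heuristically. In the structural proofs in this paper, a hypothesis of that shape is used to guarantee that the marked columns (at most $(\max-1)\binom{m}{2}$ of them) are dominated by the unmarked columns, so that the partial order $T$ must be transitive; it is not used to provide ``slack'' for entry-flipping swaps. The exchange approach is not obviously wrong, but as written it does not identify where collisions are blocked, how progress is measured, or where the numerical condition enters as a verified inequality, and so it does not constitute a proof.
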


Note that by Lemma~\ref{reduction}, we may assume without loss of generality that $b=c=p$ provided that $m$ is large. It was also proved in \cite{dillon2021exponential} that $\forb(m,3,F(p,p,p,p))=\forb(m,3,p\cdot K_2)=2^m+m2^{m-1}+(p-1)\binom{m}{2}$. In what follows, we investigate the forbidden numbers $\forb(m,3,F(a,p,p,d))$ for various parameters $a,d<p$.

In Section~\ref{sec:fabcd}, we begin by introducing some terminologies and notations. Then, we prove first a generalization of a lemma in \cite{dillon2021exponential}, which roughly says that any matrix in $\Avoid(m,3,F(a,p,p,d))$ that contains many columns must satisfy some stability-type structural properties. In Section~\ref{sec:gp}, we determine the difference between $\forb(m,3,p\cdot K_2)$ and $\forb(m,3,p\cdot I_2)$ asymptotically. Throughout the paper we let $r=\ceil{\log_2(p-1)}$, so $2^{r-1}<p-1\leq 2^r$. Our first, maybe surprising, observation is that the difference above does not depend on $m$.
\begin{prop}\label{prop:gpindep}
For all $p\geq 2$, $\forb(m,3,p\cdot K_2)-\forb(m,3,p\cdot I_2)$ is independent of $m$ as long as $m\geq 2r+2$.
\end{prop}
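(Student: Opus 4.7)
Our plan is to establish the recursive identity
\begin{equation*}
\forb(m+1, 3, p \cdot I_2) - \forb(m, 3, p \cdot I_2) \;=\; \forb(m+1, 3, p \cdot K_2) - \forb(m, 3, p \cdot K_2)
\end{equation*}
for all $m \geq 2r+2$. Using the known formula for $\forb(m, 3, p \cdot K_2)$, the right-hand side equals $\Delta(m) := 2^m + (m+2)\,2^{m-1} + (p-1)\,m$. Once this identity is in hand, the difference $\forb(m, 3, p \cdot K_2) - \forb(m, 3, p \cdot I_2)$ is invariant under $m \mapsto m+1$ throughout the range $m \geq 2r+2$, proving the proposition.

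For the lower bound on the $p \cdot I_2$-increment, we construct an $(m+1)$-row $p \cdot I_2$-avoiding matrix of size $\forb(m, 3, p \cdot I_2) + \Delta(m)$ by appending a new row to an extremal $m$-row $p \cdot I_2$-avoider and adjoining carefully chosen new columns. The key is to split the column multiplicities along the new row so that pair avoidance on $(i, m+1)$-pairs is preserved; the flexibility of choosing, per such pair, which of the $(1,0)$ or $(0,1)$ counts to keep below $p$ provides enough room to realize $\Delta(m)$ additional columns of the shape that appears in the extremal $p \cdot K_2$ construction.

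For the upper bound, let $B \in \ext(m+1, 3, p \cdot I_2)$ and partition $B = B^{(0)} \sqcup B^{(1)} \sqcup B^{(2)}$ by the value in row $m+1$. Each $B^{(i)}$ restricted to rows $1, \dots, m$ lies in $\Avoid(m, 3, p \cdot I_2)$, so naively $|B| \leq 3\,\forb(m, 3, p \cdot I_2)$, which is far too weak. We sharpen this using the structural lemma from Section~\ref{sec:fabcd}: it characterises the columns in a near-extremal $p \cdot I_2$-avoider and constrains the two- and three-way overlaps between $B^{(0)}, B^{(1)}, B^{(2)}$. Combined with the $p \cdot I_2$-avoidance on pairs $(i, m+1)$, which on each such pair forces one of the $(1,0)$ or $(0,1)$ counts below $p$, this yields $|B| \leq \forb(m, 3, p \cdot I_2) + \Delta(m)$.

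The main obstacle is executing the upper bound: the gap between the naive $3\,\forb$ bound and the target value is roughly a factor of $3/2$, so closing it demands delicate counting of the overlaps of the three submatrices rather than just their sizes. The threshold $m \geq 2r+2$ is precisely the regime in which the structural lemma becomes rigid enough to enable this accounting, and corresponds to having enough rows on each side of an $r$-related partition to avoid pathological configurations that can occur for smaller $m$.
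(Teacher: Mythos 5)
Your overall plan—prove the increment identity
\begin{equation*}
\forb(m+1,3,p\cdot I_2)-\forb(m,3,p\cdot I_2)=\forb(m+1,3,p\cdot K_2)-\forb(m,3,p\cdot K_2)
\end{equation*}
for $m\geq2r+2$—is exactly what the paper does. However, both halves of your proposal have genuine issues as stated.

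For the lower bound, you append a new row $m+1$ and want to gain $\Delta(m)$ columns, with $(p-1)m$ of them being new one-mark columns supported on the pairs $(i,m+1)$. If the new row is literally the last row in the transitive order, the pairs $(i,m+1)$ with $i\leq r$ are scarce: there are only $2^{i-1}<p-1$ columns whose unique mark sits at $(i,m+1)$, so you cannot realize $p-1$ of them and you come up short by $\sum_{i=1}^{r}(p-1-2^{i-1})$. The correct construction inserts the new row at an interior index $k$ with $r+1\leq k\leq m+1-r$ (which exists precisely when $m\geq2r+2$), because then every pair $(j,k)$ and $(k,j)$ is abundant. Your remark about ``flexibility of choosing, per such pair, which of $(1,0)$ or $(0,1)$ counts to keep below $p$'' could be interpreted as secretly achieving this—choosing some new-pair orientations as $(0,1)$ and others as $(1,0)$ places the new row at an interior position of the resulting transitive tournament—but you never say this, and without that observation the appended-at-the-end construction does not give $\Delta(m)$. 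This needs to be made explicit.

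The upper bound is where the real gap lies. You propose partitioning an extremal $B\in\ext(m+1,3,p\cdot I_2)$ by the value in row $m+1$ and then ``sharpening'' the naive $3\cdot\forb(m,3,p\cdot I_2)$ bound by ``counting of the overlaps.'' That is a plan, not an argument: you have not said what the overlap constraints are, how the transitivity lemma yields them, or how they close a multiplicative gap of roughly $3/2$. The paper's actual route is quite different and much more specific: one first shows (Lemma~\ref{maxonemark}) that an extremal matrix may be assumed to contain all $2^{m+1}+(m+1)2^m$ mark-free columns and the maximum number of one-mark columns, chosen so that every abundant pair is saturated by one-mark columns. Consequently every column with at least two marks has all its marks at scarce pairs $(i,j)$ with $i\leq r<m+2-r\leq j$, which forces that column to have the entry $2$ at any interior row $k$ with $r+1\leq k\leq m+1-r$. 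Deleting row $k$ therefore injects the $\geq2$-mark columns into an $m$-row $p\cdot I_2$-avoider, and together with the explicit counts of $0$-mark and $1$-mark columns this gives the upper bound in one shot. No inclusion--exclusion over $B^{(0)},B^{(1)},B^{(2)}$ is needed. As written, your proposal does not reach that conclusion, and it is far from clear that the last-row decomposition can be made to work.
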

This allows us to define $g_p$ to be the common value of $\forb(m,3,p\cdot K_2)-\forb(m,3,p\cdot I_2)$ for $m\geq 2r+2$, and we prove the following asymptotic result for $g_p$.  
\begin{thm}\label{thm:gpasymp}
$$g_p\sim\frac12p(\log_2p)^2.$$
\end{thm}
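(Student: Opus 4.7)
By Proposition~\ref{prop:gpindep} the quantity $g_p$ depends only on $p$ (once $m\geq 2r+2$), so I am free to take $m$ arbitrarily large. I aim to prove matching upper and lower bounds $g_p=(\tfrac12\pm o(1))pr^2$, using $r=\lceil\log_2(p-1)\rceil\sim\log_2 p$. The starting observation is the qualitative difference between the two avoidance conditions: on any pair of rows $(i,j)$, avoiding $p\cdot I_2$ forces the $\binom{1}{0}$- or $\binom{0}{1}$-count (taken over columns whose entries on rows $i,j$ lie in $\{0,1\}$) to be less than $p$, while avoiding $p\cdot K_2$ allows instead the $\binom{0}{0}$- or $\binom{1}{1}$-count to be the small one. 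The surplus $g_p$ therefore measures how much extra column budget one gains by permitting this swap.

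\textit{Lower bound.} I would construct an explicit $A\in\Avoid(m,3,p\cdot K_2)$ with $\ncols{A}\geq\forb(m,3,p\cdot I_2)+(\tfrac12-o(1))pr^2$. Taking an extremal matrix in $\ext(m,3,p\cdot I_2)$ as a base -- whose structure should follow from the stability lemma promised in Section~\ref{sec:fabcd} -- I would append additional columns designed so that, on a carefully chosen set of row-pairs, they push the $\binom{0}{1}$- and $\binom{1}{0}$-counts beyond $p-1$ (so $p\cdot I_2$ is contained) while keeping the $\binom{0}{0}$- or $\binom{1}{1}$-count below $p$ (so $p\cdot K_2$ is still avoided). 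A recursive construction parameterised by the binary expansion of $p-1$, partitioning the $m$ rows into blocks of sizes $2^0,2^1,\dots,2^{r-1}$, should yield $\sim\binom{r}{2}$ independent layers each contributing $\sim p$ extra columns.

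\textit{Upper bound and main obstacle.} For the matching upper bound I would take any $A\in\Avoid(m,3,p\cdot K_2)$ and, invoking the stability lemma of Section~\ref{sec:fabcd} to reduce to a near-extremal canonical form, attach to each pair $(i,j)$ a label in $\{00,01,10,11\}$ indicating its ``small class''. Pairs with label in $\{01,10\}$ already witness $p\cdot I_2$-avoidance on those rows; only pairs with label in $\{00,11\}$ produce surplus, and such a label constrains the columns with value $0$ (respectively $1$) in both rows $i,j$. A double-counting argument, grouping such pairs according to the binary bits of $p-1$ they exploit, should give the bound $(\tfrac12+o(1))pr^2$. The principal obstacle in both directions is pinning down the leading constant $\tfrac12$: a crude count gives $O(pr^2)$ easily, but the factor $\tfrac12$ demands that the small-class labels of the surplus pairs align on a depth-$r$ binary tree on rows, and the lower-bound construction must exactly realise this hierarchical structure.
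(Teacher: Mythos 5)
Your plan has the two directions effectively swapped, which loses the easy half and makes the hard half even harder. Since $\forb(m,3,p\cdot K_2)=2^m+m2^{m-1}+(p-1)\binom m2$ is known exactly, the upper bound $g_p\leq(\tfrac12+o(1))p(\log_2p)^2$ is equivalent to a \emph{lower} bound on $\forb(m,3,p\cdot I_2)$, i.e., a single explicit construction in $\Avoid(m,3,p\cdot I_2)$. The paper obtains this at once from Lemma~\ref{prelimlower}: fix the orientation where every pair of rows has its ``small class'' equal to $0$-above-$1$, include every column with no mark together with $\min\{p-1,2^{m-1+i-j}\}$ one-mark columns at each pair $(i,j)$; the $\binom{r+1}2$ \emph{scarce} pairs (those with $2^{m-1+i-j}<p-1$) each cost roughly $p$ columns, and the error term $(r-1)2^r+1$ is computed exactly. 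Your proposed upper bound instead analyses an arbitrary element of $\Avoid(m,3,p\cdot K_2)$, which can only re-derive the known value of $\forb(m,3,p\cdot K_2)$; to conclude anything about $g_p$ you would still have to exhibit a nearly as large member of $\Avoid(m,3,p\cdot I_2)$, which is precisely the construction your sketch never produces.

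The real work is the lower bound $g_p\geq(\tfrac12-o(1))p(\log_2p)^2$, an \emph{upper} bound on $\forb(m,3,p\cdot I_2)$. Your plan to append $\sim\tfrac12 pr^2$ fresh columns to an arbitrary $B\in\ext(m,3,p\cdot I_2)$ while staying in $\Avoid(m,3,p\cdot K_2)$ would indeed give this if it could be carried out, but it demands structural control over \emph{every} extremal $B$ that the stability lemma alone does not provide: Lemma~\ref{transitive} fixes the tournament orientation and Lemma~\ref{maxonemark} then determines the $0$- and $1$-mark content, but the $\geq2$-mark columns remain unconstrained, and it is exactly these that could spoil an append argument. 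The paper instead bounds $\forb(m,3,p\cdot I_2)$ by an intrinsic count on $B$ (Proposition~\ref{gplower}). After normalising by Lemma~\ref{maxonemark}, every mark of a $\geq2$-mark column must lie on a scarce pair; Lemma~\ref{foursigma} then bounds the number of columns with $i=bc$ scarce marks by $\tau(i)r2^r$, the factor $\tau(i)$ appearing because such a column is a block of $c$ zeros above a block of $b$ ones with $bc=i$; finally a truncated averaging in the number of marks, with a parameter $k\to\infty$, absorbs the divisor-function error and yields the sharp constant $\tfrac12$. These are the actual obstacles, and none of them is produced by the recursive/binary-tree heuristic you describe; in particular, your sketch contains no mechanism for preventing many-mark columns from inflating $\forb(m,3,p\cdot I_2)$, which is where the whole difficulty lives.
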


In Section~\ref{subsec:p34}, we compute $\forb(m,3,F(a,p,p,d))$ for all $p\leq4$ and $\max\{a,d\}<p$, which all turn out to be equal to $\forb(m,3,p\cdot I_2)$.

This no longer holds for larger values of $p$, but in Section~\ref{sec:p-k} we have further exact results for many cases. Let \[N_r=\binom{\ceil{\frac{r}{2}}+1}{2}+\binom{\floor{\frac{r}{2}}+1}{2}.\]
The next theorem determines $\forb(m,3,F(p-q,p,p,p-q))$ for a wide range of values $p$ and $q$.

\begin{thm}\label{thm:p-k}
For all $m\geq 2N_r+2$ and $0\leq q\leq p-1$, $$\forb(m,3,F(p-q,p,p,p-q))=\forb(m,3,p \cdot K_2)-qN_r$$
if one of the following holds
\begin{itemize}
    \item $p\geq2^{r-1}+2q+1$,
    \item $p\leq2^{r-1}+2q$, $qN_r\leq2^{r-3}$ and $\frac12(\floor{\frac r2}+1)(p-1-2^{r-1})\geq q$. 
\end{itemize}
In particular, this is true for all fixed constants $q$ and all $p$ sufficiently large.
\end{thm}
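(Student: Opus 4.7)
The plan is to prove matching upper and lower bounds on $\forb(m,3,F(p-q,p,p,p-q))$. Since $F(p-q,p,p,p-q)\prec F(p,p,p,p)=p\cdot K_2$, one has the trivial bound $\forb(m,3,F(p-q,p,p,p-q))\le\forb(m,3,p\cdot K_2)=2^m+m2^{m-1}+(p-1)\binom{m}{2}$, and the substance of the theorem is to sharpen this by exactly $qN_r$ and to match it by an explicit construction.

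For the lower bound, I would construct a matrix in $\Avoid(m,3,F(p-q,p,p,p-q))$ of size $\forb(m,3,p\cdot K_2)-qN_r$. The construction should resemble the $p\cdot K_2$ extremal---keeping all columns with at most one $0$, plus a carefully reorganised collection of two-zero columns---but modified using a distinguished subset of roughly $r+2$ rows partitioned into two balanced parts of sizes $\ceil{r/2}+1$ and $\floor{r/2}+1$. The quantity $N_r=\binom{\ceil{r/2}+1}{2}+\binom{\floor{r/2}+1}{2}$ will then arise as the number of within-part pairs produced by this partition, and each such pair will contribute $q$ missing columns compared to the $p\cdot K_2$ extremal. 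The two alternative hypotheses on $p$ are precisely what ensures that the bipartition can be balanced appropriately and that $F(p-q,p,p,p-q)$ is avoided on every pair of rows, both inside and outside the bipartition; the second hypothesis is plausibly needed to keep the construction valid in the regime where $p-1$ is close to $2^{r-1}$ and the usual budget of two-zero columns per pair becomes tight.

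For the upper bound, I would apply the stability-type lemma from Section~\ref{sec:fabcd} to show that any $A\in\Avoid(m,3,F(p-q,p,p,p-q))$ of near-maximum size must adhere to a structure close to that of the $p\cdot K_2$ extremal, so that most columns carry at most one $0$ and the remaining zero-heavy columns concentrate on a small number of rows. The additional step is to show that the stronger $F(p-q,p,p,p-q)$-constraint forces a deficit of at least $qN_r$ columns. This should come from a double-counting or LP-duality argument on the $N_r$ internal pairs of the bipartition, showing that regardless of which pair-threshold is violated (whether the $00$, $11$, $01$, or $10$ count), at least $q$ columns per internal pair must be lost. The main obstacle is precisely this accounting step: a naive bound gives a loss of order $q\binom{m}{2}$, whereas the theorem asserts the $m$-independent constant $qN_r$. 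Extracting this constant will require a delicate localisation argument, and the proof will likely split according to the two parameter regimes of the theorem, the second case $p\le 2^{r-1}+2q$ requiring additional balancing expressed in the quantity $\tfrac12(\floor{r/2}+1)(p-1-2^{r-1})$.
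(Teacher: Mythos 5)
Your high-level plan — matching lower and upper bounds, using the stability/transitivity lemma, seeking an $m$-independent deficit governed by a split of roughly $r$ special rows into two near-equal parts of sizes $\lceil r/2\rceil+1$ and $\lfloor r/2\rfloor+1$ — is broadly aligned with the paper, and your number-theoretic reading of $N_r$ as a count of within-part pairs of that split is correct. But there is a genuine gap in the upper bound, and a conceptual slip in the lower bound.

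On the lower bound: note that this is a $3$-matrix problem, so the construction is not organised around columns with ``few zeros.'' It is organised around \emph{marks} in the sense of Definition~\ref{def:edge-nonedge}: one designates $\binom{\lceil r/2\rceil+1}{2}$ pairs among the first $\lceil r/2\rceil+1$ rows as $0$-non-edges and $\binom{\lfloor r/2\rfloor+1}{2}$ pairs among the last $\lfloor r/2\rfloor+1$ rows as $1$-non-edges, then counts no-mark and one-mark columns via Lemmas~\ref{lemma:correspondence} and~\ref{lemma:0mark}. Each non-edge admits only $p-q-1$ (rather than $p-1$) one-mark columns, giving the $qN_r$ deficit. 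The paper's Proposition~\ref{prop:lowerbound} and Corollary~\ref{cor:lowerbound} do exactly this. Your description, while gesturing at the right bipartition, stays in $(0,1)$-matrix language and would need to be translated into this mark framework to verify that both $00$ and $11$ repetitions are simultaneously controlled.

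The deeper gap is in the upper bound. You propose ``a double-counting or LP-duality argument on the $N_r$ internal pairs of the bipartition,'' but this presupposes the bipartition is dictated by the extremal $A$, which is precisely what must be proved. An arbitrary $A\in\Avoid(m,3,F(p-q,p,p,p-q))$ with many columns may place its non-edges anywhere. The paper's central contribution here is the notion of \emph{scarce} pairs (a constant number, $\binom{r+1}{2}$, near the two ends of the row order), together with the classification of scarce pairs as \emph{fixed}, \emph{weakly fixed}, or neither depending on how many non-edges pass through them. The trade-off is: each non-edge costs $q$ columns, but non-edges can ``fix'' scarce pairs, and an unfixed scarce pair costs on the order of $\tfrac12(p-1-2^{r-1})$ columns. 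Showing that this trade-off never beats $qN_r$ requires the triangular-array extremal results (Lemmas~\ref{lemma:M-N}, \ref{lemma:LP}, \ref{lemma:induction}), which are discrete optimisation statements about row/column operations on the array with entries $r+2-i-j$ — these have no analogue in your sketch. The two hypotheses of the theorem correspond exactly to two regimes of this trade-off: $p\ge 2^{r-1}+2q+1$ guarantees every unfixed scarce pair is individually expensive enough, while the second clause ($qN_r\le 2^{r-3}$ and $\tfrac12(\lfloor r/2\rfloor+1)(p-1-2^{r-1})\ge q$) is needed to bound the savings from economising on non-edges when unfixed pairs are cheap. Without the scarce-pair/fixing machinery and the triangular-array lemmas, the $m$-independent bound $qN_r$ does not emerge from naive counting, which is the obstacle you yourself flag but do not resolve.
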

With a more careful analysis of the proof of Theorem~\ref{thm:p-k}, we prove the following result.
\begin{cor}\label{thm:p-1ppp-1}
For all $m\geq 2N_r+2$ and $p\geq 2$, $$\forb(m,3,F(p-1,p,p,p-1))=\forb(m,3,p \cdot K_2)-N_r.$$
\end{cor}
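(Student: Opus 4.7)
The plan is to invoke Theorem~\ref{thm:p-k} with $q=1$ whenever its hypotheses hold and to treat the remaining values of $p$ by a direct analysis. The first condition $p \geq 2^{r-1}+3$ fails only at the boundary $p = 2^{r-1}+2$ for each $r$, and the second condition then requires $N_r \leq 2^{r-3}$, which holds for $r \geq 7$ but not for smaller $r$ since $N_r \sim r^2/4$ while $2^{r-3}$ grows exponentially. Combined with the requirement $r \geq 2$ for the last inequality of condition~2, this leaves the small cases $p \in \{2,3,4\}$ together with the boundary values $p \in \{6, 10, 18, 34\}$ (corresponding to $r = 3, 4, 5, 6$) that are not directly covered by Theorem~\ref{thm:p-k}.

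For $p \leq 4$, Section~\ref{subsec:p34} establishes that $\forb(m,3, F(p-1,p,p,p-1)) = \forb(m,3, p\cdot I_2)$, so the corollary reduces to verifying the identity $g_p = N_r$ for $p \in \{2,3,4\}$ with $r = \ceil{\log_2(p-1)}$. This is a direct computation using the known formula $\forb(m,3, p\cdot K_2) = 2^m + m\cdot 2^{m-1} + (p-1)\binom{m}{2}$ and the corresponding formulas for $\forb(m,3, p\cdot I_2)$ from \cite{dillon2021exponential}, noting that $N_0 = 0$, $N_1 = 1$, $N_2 = 2$. For each boundary value $p \in \{6, 10, 18, 34\}$, the strategy is to revisit the proof of Theorem~\ref{thm:p-k} with $q = 1$ in mind. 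The lower bound construction from that proof is unconditional: one removes $qN_r = N_r$ carefully chosen columns from an extremal $p \cdot K_2$-avoiding matrix and checks that the result lies in $\Avoid(m,3, F(p-1,p,p,p-1))$, independently of the auxiliary inequalities. It is the upper bound that requires the hypotheses, and there the condition $qN_r \leq 2^{r-3}$ enters only to absorb a slack term in a counting step fed by the stability lemma of Section~\ref{sec:fabcd}. For $q = 1$ this slack is exactly $N_r$, and it can be matched by an explicit integer count using the small values $N_3 = 4$, $N_4 = 6$, $N_5 = 9$, $N_6 = 12$.

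The main obstacle I expect is re-running this upper-bound counting step cleanly for each of the four boundary values of $p$ without invoking the sufficient inequality $qN_r \leq 2^{r-3}$. Since the exceptional parameters are finite and explicit, and the stability lemma already pins down the structure of any near-extremal $A \in \Avoid(m,3, F(p-1,p,p,p-1))$ up to a small number of columns, the argument should collapse to verifying a handful of concrete integer inequalities for each $p \in \{6, 10, 18, 34\}$. If a uniform tightening of the counting step proves delicate, the four remaining cases can always be dispatched individually using the explicit values of $N_r$.
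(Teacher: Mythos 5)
Your overall scaffold is sound and largely mirrors the paper's proof: dispatch $p\leq4$ via Section~\ref{subsec:p34} together with the equality $g_p=N_r$ for those $p$, apply Theorem~\ref{thm:p-k} with $q=1$ for all $p$ matching one of its two hypotheses, and re-run the Theorem~\ref{thm:p-k} upper-bound argument for the remaining boundary $p$. Your identification of the boundary values $p=2^{r-1}+2$ with $r\in\{3,4,5,6\}$ (hence $p\in\{6,10,18,34\}$) as exactly the uncovered cases for $p\geq 5$ is correct, and the lower bound does indeed come unconditionally from Corollary~\ref{cor:lowerbound}.

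However, there is a genuine gap where the proposal becomes a promissory note. You write that for the boundary $p$ the upper bound ``should collapse to verifying a handful of concrete integer inequalities'' and, if that proves delicate, the four cases ``can always be dispatched individually using the explicit values of $N_r$.'' This is precisely the hard part of the proof, and you have not carried it out. The paper's argument in the ``not all scarce pairs fixed or weakly fixed'' regime requires deriving inequality \eqref{eq:notfixed}, strengthening it to \eqref{eq:Nrupperbd} in terms of the number $N$ of non-edges and the counts $w_f,n_f$ of weakly-fixed and not-even-weakly-fixed scarce pairs, and then verifying \eqref{eq:Nrupperbd} separately for each $r\in\{2,\dots,6\}$. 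That verification is not a matter of plugging in $N_r$; it rests on structural facts about the triangular array $T_r$ under $N$ row/column operations (for instance, if $N\leq 2$ and $r=3$ then at least one diagonal entry must be weakly fixed, or if $N\leq 2$ and $r=4$ then at least two of the three upper-left entries of $T_4$ are not even weakly fixed). Your proposal does not introduce the fixed/weakly-fixed dichotomy at all, which suggests you have not internalized the mechanism that makes the slack absorbable when $qN_r>2^{r-3}$. Without working through that structural counting, the boundary cases are not actually settled, and that is where all the difficulty lives.

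One further small omission: the way you state the $p\leq 4$ reduction, you still need to observe (as the paper does) that the Section~\ref{subsec:p34} identities hold under the hypothesis $m\geq 2N_r+2$ directly, without invoking Lemma~\ref{reduction}, since Lemma~\ref{reduction} would demand the exponentially stronger $2^{m-2}\geq(\max\{b,c\}-1)m^2$. This is easy but should be noted to match the stated $m\geq 2N_r+2$ range.
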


Determining the exact forbidden number in the remaining cases when $q$ is relatively large might be difficult, given that the exact value of $\forb(m,3,p\cdot I_2)$ is not known. In particular, the following result shows that the exact formula in Theorem \ref{thm:p-k} may not hold if those conditions are not satisfied. 
\begin{prop}\label{prop:notalwaystrue}
If $m\geq 2N_r+2$, $0\leq q\leq p-1$ and $d:=q-(\ceil{\frac{r}2}+1)(p-1-2^{r-1})>0$, then \[\forb(m,3,F(p-q,p,p,p-q))\geq\forb(m,3,p \cdot K_2)-qN_r+\floor{\frac r2}d.\]
\end{prop}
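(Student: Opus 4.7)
This is a lower-bound statement, so the natural route is an explicit construction: exhibit a matrix $A \in \Avoid(m,3,F(p-q,p,p,p-q))$ with $\ncols{A} \geq \forb(m,3,p\cdot K_2) - qN_r + \floor{\frac r2} d$. My starting point is the construction underlying Theorem~\ref{thm:p-k}, in which one begins with an extremal matrix $A_0 \in \ext(m,3,p\cdot K_2)$ and deletes $qN_r$ of its columns, organized as $N_r$ groups of $q$ columns indexed by carefully chosen pairs of rows, so that on each indexed pair either the pattern $\binom{0}{1}$ or the pattern $\binom{1}{0}$ ends up with multiplicity at most $p-1-q$. When the hypotheses of Theorem~\ref{thm:p-k} hold, this count is tight; but when $d>0$, its second condition fails and $d$ quantifies how badly, leaving slack in the argument that Proposition~\ref{prop:notalwaystrue} claims to be at least $\floor{\frac r2}d$.

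To exploit the slack, decompose $N_r = \binom{\ceil{\frac r2}+1}{2} + \binom{\floor{\frac r2}+1}{2}$, viewing the $N_r$ critical row-pairs as the edges of two disjoint cliques on row sets of sizes $\ceil{\frac r2}+1$ and $\floor{\frac r2}+1$. The inequality $q > (\ceil{\frac r2}+1)(p-1-2^{r-1})$, which is exactly $d>0$, says that on the smaller of these cliques the per-row "column budget" of $p-1-2^{r-1}$ available in $A_0$ is exceeded by the $q$-fold deletions required along each incident row-pair. Deletions on distinct row-pairs inside the smaller clique must therefore share columns, and a direct count (row-by-row along the $\floor{\frac r2}+1$ vertices, each contributing $d$ forced coincidences when subtracted against the Theorem~\ref{thm:p-k} tally) shows that at least $\floor{\frac r2}d$ of the columns counted by $qN_r$ are counted twice. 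Re-instating one representative from each coincidence returns $\floor{\frac r2}d$ columns to $A_0$.

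The main obstacle is the consistency check: the resulting matrix must still avoid $F(p-q,p,p,p-q)$, which reduces to verifying row-pair by row-pair that at least one of $\binom{0}{1},\binom{1}{0}$ retains multiplicity strictly less than $p-q$. The restored columns only affect pairs inside the smaller clique, and on each such pair the second deletion group incident to the pair keeps the relevant multiplicity below the threshold. Executing this cleanly requires a careful description of how the two-$2$ columns of $A_0$ are assigned to row-pairs, together with the hypothesis $m \geq 2N_r+2$, which provides enough room to place the two cliques on disjoint rows isolated from the rest of the matrix; granted these ingredients, the verification should be a routine audit pair-by-pair.
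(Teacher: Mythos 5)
Your high-level plan — produce an explicit construction beating the baseline $\forb(m,3,p\cdot K_2)-qN_r$ — is right, and you correctly identify the split $N_r=\binom{\ceil{\frac r2}+1}{2}+\binom{\floor{\frac r2}+1}{2}$ into two row-cliques. However, the central mechanism you propose — that when $d>0$, the $q$-fold deletions on distinct row-pairs inside the smaller clique "must share columns," so $\floor{\frac r2}d$ columns are double-counted and can be reinstated — does not hold. In the construction underlying Corollary~\ref{cor:lowerbound}/Proposition~\ref{prop:lowerbound}, the $q$ columns one declines to include at a non-edge $(i,j)$ are $1$-mark columns whose unique mark lies at $(i,j)$, and by Lemma~\ref{lemma:correspondence} these sets are pairwise disjoint across distinct non-edges (a $1$-mark column has exactly one marked pair). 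So there is no overcounting to undo, and reinstating any such column would immediately reintroduce a forbidden configuration at its non-edge. The "consistency check" you defer to would therefore fail.

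The correct route, and the one the paper takes, is to change the non-edge configuration itself rather than to repair the old one: keep the $\binom{\ceil{\frac r2}+1}{2}$ $0$-non-edges on rows $1,\ldots,\ceil{\frac r2}+1$, but place only $\binom{\floor{\frac r2}}{2}$ $1$-non-edges, on rows $m-\floor{\frac r2}+1,\ldots,m$ (one fewer vertex in the smaller clique). This uses $\floor{\frac r2}$ fewer non-edges, saving $q$ columns at each; the price is that the $(\ceil{\frac r2}+1)\floor{\frac r2}$ scarce pairs $(i,m+1-j)$ with $i\leq\ceil{\frac r2}+1$ and $j\leq\floor{\frac r2}$ drop from being fixed to only weakly fixed, and by Lemma~\ref{lemma:correspondence} a weakly fixed pair admits just $2^{r-1}$ one-mark columns instead of $p-1$. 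Summing the $0$-mark columns via Lemma~\ref{lemma:0mark} and the $1$-mark columns over all pair types gives a net gain of $\floor{\frac r2}q-(\ceil{\frac r2}+1)\floor{\frac r2}(p-1-2^{r-1})=\floor{\frac r2}d$ over the baseline. You touched the right quantities ($p-1-2^{r-1}$, the two-clique split) but stopped short of seeing that the improvement comes from trading non-edges for weakly fixed pairs, not from reusing deleted columns.
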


We also have the following more general lower bound.
\begin{prop}\label{prop:lowerbound}
For every $0\leq q_0,q_1\leq p-1$, every $0\leq r_1,r_2\leq r$ satisfying $r_1+r_2=r$, and every $m\geq 2\binom{r_1+1}2+2\binom{r_2+1}2+2$, we have
\[\forb(m,3,F(p-q_0,p,p,p-q_1))\geq\forb(m,3,p\cdot K_2)-\min\{q_0,q_1\}\binom{r_1+1}2-\max\{q_0,q_1\}\binom{r_2+1}2.\]
\end{prop}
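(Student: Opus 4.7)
First, observe that $F(a,b,c,d)$ and $F(d,c,b,a)$ coincide after globally swapping $0\leftrightarrow 1$ in every row, a bijection that preserves simplicity and column count. Consequently, $\forb(m,3,F(p-q_0,p,p,p-q_1))=\forb(m,3,F(p-q_1,p,p,p-q_0))$, and I may assume without loss of generality that $q_0\le q_1$, so that $\min\{q_0,q_1\}=q_0$ and $\max\{q_0,q_1\}=q_1$. The goal is then to exhibit a simple $m$-rowed $3$-matrix $A\in\Avoid(m,3,F(p-q_0,p,p,p-q_1))$ with at least $\forb(m,3,p\cdot K_2)-q_0\binom{r_1+1}{2}-q_1\binom{r_2+1}{2}$ columns.

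My plan is to take an extremal matrix $A^*$ realizing $\forb(m,3,p\cdot K_2)=2^m+m\cdot 2^{m-1}+(p-1)\binom{m}{2}$ and delete a carefully chosen set of columns. Every such $A^*$ has, on each pair of rows, at least one of the four $\{0,1\}$-projection counts $|00|,|01|,|10|,|11|$ bounded by $p-1$; moreover, the extremal construction from \cite{dillon2021exponential} admits sufficient flexibility to prescribe, pair by pair, which count plays this bounded role. Using the hypothesis $m\ge 2\binom{r_1+1}{2}+2\binom{r_2+1}{2}+2$, I fix two disjoint row subsets $R_1,R_2\subseteq[m]$ of sizes $r_1+1$ and $r_2+1$, retaining at least $\binom{r_1+1}{2}+\binom{r_2+1}{2}$ rows outside $R_1\cup R_2$ to serve as distinguishing markers. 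The key is to choose $A^*$ so that on pairs inside $R_1$ the bounded count is the $00$-count, on pairs inside $R_2$ the bounded count is the $11$-count, and on all remaining pairs it is the $01$- or $10$-count. From this $A^*$ I then delete, for each pair in $R_1$, exactly $q_0$ columns contributing to its $00$-count, taking each deleted column to have entries in $\{1,2\}$ on every row outside that pair (so that its removal does not alter counts on any other pair), and symmetrically for $R_2$ with $q_1$ and the $11$-count, yielding a total of $q_0\binom{r_1+1}{2}+q_1\binom{r_2+1}{2}$ deletions.

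A pair-by-pair case check then verifies that the resulting matrix $A$ lies in $\Avoid(m,3,F(p-q_0,p,p,p-q_1))$: on pairs inside $R_1$ the $00$-count is at most $p-1-q_0<p-q_0$; on pairs inside $R_2$ the $11$-count is at most $p-1-q_1<p-q_1$; on all other pairs, the $01$- or $10$-count is at most $p-1<p$, inherited from $A^*$ since deletion cannot increase any count. In each case one of the four "at least" conditions defining $F(p-q_0,p,p,p-q_1)$ fails. The main obstacle I expect is establishing the needed flexibility of the extremal construction for $p\cdot K_2$: namely, that its $(p-1)\binom{m}{2}$ "pair-indexed" extra columns can be arranged to realize any prescribed assignment of the bounded count across pairs, provided enough distinct marker rows are available. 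This is where the factor of $2$ in the row-count hypothesis is essential, and carrying out the verification requires unpacking the explicit extremal construction from \cite{dillon2021exponential}.
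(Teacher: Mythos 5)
You have the right high-level shape — \(0{\leftrightarrow}1\)-complementation to assume \(q_0\le q_1\), then exhibit a lower-bound construction with the stated number of columns — but the proposal defers exactly the part that carries all the difficulty, and what you sketch for that part would not go through as written.

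The central gap is your assertion that an extremal matrix \(A^*\in\ext(m,3,p\cdot K_2)\) ``admits sufficient flexibility to prescribe, pair by pair, which count plays this bounded role.'' This is not free, and it is not a matter of unused marker rows: whether \(|A^*|\) can stay at \(2^m+m2^{m-1}+(p-1)\binom m2\) after declaring certain pairs to be \(00\)- or \(11\)-bounded is governed by a quantitative compatibility condition on the \emph{scarce} pairs \((i,j)\) (those with \(2^{i-1+m-j}<p-1\)). In the paper this is the notion of a scarce pair being \emph{fixed}, encapsulated in Lemma~\ref{lemma:correspondence}: after placing 0-non-edges on pairs inside the top \(r_1+1\) rows and 1-non-edges on pairs inside the bottom \(r_2+1\) rows, one must verify that \(a_i+b_j\ge r+2-i-j\) for every scarce pair, and this verification uses the hypothesis \(r_1+r_2=r\) essentially. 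Your plan never uses \(r_1+r_2=r\), and your \(R_1,R_2\) are just two disjoint row sets rather than a prefix and a suffix, which is itself not innocuous — the count of available \(1\)-mark columns at a pair depends on its position along the row order, so the feasibility of the prescription is position-dependent. Without establishing that all scarce pairs are fixed, you cannot conclude \(|A^*|=\forb(m,3,p\cdot K_2)\), which is the starting point of your deletion argument.

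There is also a smaller inaccuracy in the deletion step: a column equal to \(0\) on the pair \((i,j)\in R_1\) and in \(\{1,2\}\) elsewhere \emph{does} change the projection counts at other pairs (e.g.\ it may contribute to a \(01\)- or \(11\)-count involving one of \(i,j\) and a row where the entry is \(1\)). That is harmless for avoidance, since deletion only decreases counts, but the statement ``its removal does not alter counts on any other pair'' is false and suggests you would not have noticed the more serious issue of whether the \(q_0\binom{r_1+1}{2}+q_1\binom{r_2+1}{2}\) columns you want to delete are actually present in \(A^*\) and pairwise distinct (in principle a single column could have a \(00\)-projection on an \(R_1\)-pair and a \(11\)-projection on an \(R_2\)-pair). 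The paper sidesteps all of this by \emph{directly constructing} the avoiding matrix \(A\): take the \(2^m+m2^{m-1}\) no-mark columns (Lemma~\ref{lemma:0mark}), then for each non-non-edge pair exactly \(p-1\) single-mark columns, for each 0-non-edge \(p-q_0-1\), for each 1-non-edge \(p-q_1-1\), all guaranteed to exist and be distinct by Lemma~\ref{lemma:correspondence}. You need an argument of that granularity; ``delete from an extremal \(A^*\)'' without controlling the structure of \(A^*\) does not provide it.
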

For fixed $p,q_0,q_1$, we may optimize the right-hand side by viewing $r_1$ as the variable and using $r_2=r-r_1$. That is, assuming $q_0\leq q_1$, we minimize $q_0\binom{r_1+1}2+q_1\binom{r-r_1+1}2$. Easy calculation shows that if $q_1=\alpha q_0$ for some $\alpha\ge 1$, then the minimum is achieved when $r_1$ is the closest integer to $\frac{(2r+1)\alpha-1}{2(\alpha+1)}$, as long as this fraction is at most $r$, which is equivalent to $\alpha\leq 2r+1$. Otherwise, $r_1=r$ is optimal. Note that if $q_0=q_1$ and so $\alpha=1$, then the minimum is achieved when $r_1=\ceil{\frac r2}, r_2=\floor{\frac r2}$. The resulting lower bound in this case is tight in many cases by Theorem \ref{thm:p-k}. We suspect that a more involved version of this proof could potentially show that after optimization, the general lower bound for $\forb(m,3,F(p-q_0,p,p,p-q_1))$ given by Proposition \ref{prop:lowerbound} is also tight in several cases.

\section{Results}\label{sec:fabcd}
Recall that from Lemma \ref{reduction}, we may assume that $b=c$, as long as $m$ is sufficiently large. From now on, we will assume $b=c=p\geq 3$, $0\leq a,d\leq p-1$ unless stated otherwise. Generalizing some ideas from \cite{dillon2021exponential}, we now make some definitions to help better understand the structure of matrices in $\Avoid(m,3,F(a,p,p,d))$.

\begin{definition}\label{def:edge-nonedge}
Let $A\in\Avoid(m,3,F(a,p,p,d))$. To each pair $(i,j)$ with $1\leq i<j\leq m$, associate a vector $\begin{bmatrix}x\\y\end{bmatrix}\in\left\{\begin{bmatrix}0\\0\end{bmatrix}, \begin{bmatrix}0\\1\end{bmatrix}, \begin{bmatrix}1\\0\end{bmatrix}, \begin{bmatrix}1\\1\end{bmatrix}\right\}$ so that if $\begin{bmatrix}x\\y\end{bmatrix}=\begin{bmatrix}0\\1\end{bmatrix}$ or $\begin{bmatrix}1\\0\end{bmatrix}$ then rows $i,j$ contain at most $p-1$ copies of $\begin{bmatrix}x\\y\end{bmatrix}$, if $\begin{bmatrix}x\\y\end{bmatrix}=\begin{bmatrix}0\\0\end{bmatrix}$ then rows $i,j$ contain at most $a-1$ copies of $\begin{bmatrix}0\\0\end{bmatrix}$ and if $\begin{bmatrix}x\\y\end{bmatrix}=\begin{bmatrix}1\\1\end{bmatrix}$then rows $i,j$ contain at most $d-1$ copies of $\begin{bmatrix}1\\1\end{bmatrix}$. Such a vector must exist as $F(a,p,p,d)\nprec A$, and if multiple choices are possible, we pick one arbitrarily. In all cases, we say a column $v$ of $A$ has a mark at the pair $(i,j)$ if $v_i=x$ and $v_j=y$. 

Let $B$ be the matrix consisting of all columns of $A$ with no mark, and let $C$ be the matrix consisting of all columns of $A$ with at least one mark. We call $A=\begin{bmatrix}B\mid C\end{bmatrix}$ the standard decomposition of $A$. 

We also associate to $A$ a directed graph $T$ on $[m]$ as follows: for each pair $1\leq i<j\leq m$, if $\begin{bmatrix}0\\1\end{bmatrix}$ is associated with $(i,j)$, add the directed edge $ij$; if $\begin{bmatrix}1\\0\end{bmatrix}$ is associated with $(i,j)$, add the directed edge $ji$; otherwise do nothing. Let $N$ denote the number of pairs $(i,j)$ that are associated with either $\begin{bmatrix}0\\0\end{bmatrix}$ or $\begin{bmatrix}1\\1\end{bmatrix}$. We say that these pairs form 0-non-edges and 1-non-edges in $T$, respectively.
\end{definition}

Using the notations above, we have some immediate observations. It follows from the definition that $B$ is in $\Avoid(m,3,K_2)$ so $\abs{B}\leq 2^m+m2^{m-1}$. We also have $\abs{C}\leq(p-1)(\binom{m}{2}-N)+N(\max\{a,d\}-1)=(p-1)\binom{m}{2}-N(p-\max\{a,d\})$, since every column in $C$ contains at least one mark. 

The following key lemma, generalisaing one from \cite{dillon2021exponential}, shows that if $|B|$ is large, then the associated directed graph $T$ is transitive.

\begin{lemma}\label{transitive}
If $\abs{B}>2^m+m2^{m-1}-2^{m-3}$, then $T$ is transitive. 
\end{lemma}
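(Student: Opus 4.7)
The plan is to prove the contrapositive: assuming $T$ is not transitive, I will show $|B|\le 2^m+m2^{m-1}-2^{m-3}$. Non-transitivity of $T$ yields distinct rows $i,j,k\in[m]$ with $i\to j$ and $j\to k$ in $T$ but $i\not\to k$; the pair $(i,k)$ is therefore in one of three bad configurations: a reverse edge $k\to i$, a $0$-non-edge, or a $1$-non-edge.

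I partition the columns of $B$ by their $2$-support $S=\{\ell:v_\ell=2\}$ and write $|B|=\sum_{S\subseteq[m]}L(S)$, where $L(S)$ counts columns of $B$ with $2$-support $S$. After deleting the rows in $S$, each such column is a $(0,1)$-vector on $n:=m-|S|$ rows, and the resulting $(0,1)$-matrix is simple and avoids $K_2$ (inherited from $B$ having no marks). Sauer--Shelah then gives $L(S)\le n+1$, and summing over $S$ recovers the crude $|B|\le 2^m+m2^{m-1}$. The goal is then to prove the strengthened bounds $L(S)\le n-1$ (reverse-edge case) or $L(S)\le n$ (non-edge cases) for each of the $2^{m-3}$ subsets $S\subseteq[m]\setminus\{i,j,k\}$. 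Summing the savings of $2$ or $1$ per such $S$ yields a total improvement of at least $2^{m-2}$ or $2^{m-3}$ respectively, and in either case $|B|\le 2^m+m2^{m-1}-2^{m-3}$.

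For the strengthened bounds, I group the columns of the $(0,1)$-restriction on $[m]\setminus S$ by their triangle pattern $\tau=(v_i,v_j,v_k)$. The bad-triple constraints restrict the triangle to a set $\mathcal{T}$ of size $2$ (namely $\{000,111\}$ in the reverse-edge case) or $3$ (in each non-edge case). For every outside row $\ell$ and every $r\in\{i,j,k\}$, the $K_2$-freeness of the restriction on the pair $(r,\ell)$ forbids at least one of the four $(0,1)^2$-patterns, which---given the common value of $v_r$ within each triangle class---forces a specified union of the sub-matrices $M_\tau$ (the columns with triangle pattern $\tau$) to be constant on row $\ell$. A short case check shows that in the reverse-edge case the pair $(i,\ell)$ alone forces at least one $M_\tau$ with $\tau\in\mathcal{T}$ to be constant on $\ell$, and in each non-edge case combining the disjunctions from the pairs $(j,\ell)$ and $(k,\ell)$ forces at least two of the three $M_\tau$'s to be constant on $\ell$. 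Letting $L_\tau\subseteq[m]\setminus S\setminus\{i,j,k\}$ denote the set of outside rows on which $M_\tau$ is constant, this gives $\sum_{\tau\in\mathcal{T}}|L_\tau|\ge(|\mathcal{T}|-1)(n-3)$. Since each $M_\tau$ projected off its constant rows is a simple $K_2$-free $(0,1)$-matrix on $n-3-|L_\tau|$ rows, $|M_\tau|\le n-2-|L_\tau|$; summing over $\tau\in\mathcal{T}$ gives $L(S)\le |\mathcal{T}|(n-2)-(|\mathcal{T}|-1)(n-3)$, which simplifies to $n-1$ when $|\mathcal{T}|=2$ and to $n$ when $|\mathcal{T}|=3$.

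The main obstacle will be the case check in the non-edge settings, where each outside row $\ell$ gives four ``miss one pattern'' options from the pair $(j,\ell)$ and four from $(k,\ell)$, and one needs to verify that every one of the sixteen combinations forces at least two of the three triangle classes to be constant on $\ell$, thereby yielding the required lower bound on $\sum_{\tau}|L_\tau|$.
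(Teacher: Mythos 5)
Your proposal is correct and follows the same global strategy as the paper: decompose $B$ by the set of rows where the entry is $2$, apply Sauer--Shelah on each piece, and show that pieces whose $(0,1)$-rows include $\{i,j,k\}$ admit an improved bound ($n-1$ in the reverse-edge case, $n$ in the non-edge cases), with the savings summed over the $2^{m-3}$ such pieces. Where you diverge is in how the improvement on each piece is obtained: the paper simply notes that in the reverse-edge case the triangle restriction $B(X)|_{\{i,j,k\}}$ has only the patterns $000,111$, so two of the rows $i,j,k$ are redundant and can be deleted, giving $\forb(n-2,2,K_2)=n-1$; and in the non-edge case one of the rows is constant, giving $\forb(n-1,2,K_2)=n$. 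Your argument instead splits the piece into triangle classes $M_\tau$, proves for each outside row $\ell$ that at least $|\mathcal{T}|-1$ of the classes are constant on $\ell$ (via the forbidden pair pattern inherited from $K_2$-freeness), and recombines through $L(S)\le\sum_\tau(n-2-|L_\tau|)\le|\mathcal{T}|(n-2)-(|\mathcal{T}|-1)(n-3)$. This is sound and produces exactly the same numbers, but it replaces a one-line ``delete the redundant rows'' observation with a class-by-class outside-row case analysis; the paper's route exploits the same structural facts ($|\mathcal{T}|=2$ means two of $i,j,k$ are determined by the third; $|\mathcal{T}|=3$ means one of them is constant) more economically, bypassing the sixteen-case check you flag as the main obstacle.
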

\begin{proof}
Assume $T$ is not transitive, then there exists a triple $(i,j,k)$ of distinct vertices such that $ij,jk$ are edges of $T$ but either $ki$ is an edge of $T$ or there is no edge between $k$ and $i$. For a column $v$ in $B$, define its \emph{support} to be the set of row indices $i$ such that $v_i$ is either 0 or 1. For any set $X\subset[m]$ of rows of $B$, let $B(X)$ be the matrix consisting of columns of $B$ with support $X$. Then its restriction $B(X)|_X$ of $B(X)$ to the rows in $X$ is simple and is in $\Avoid(\abs{X},2,K_2)$. Thus, if $\{i,j,k\}\not\subset X$, then $\abs{B(X)}=\abs{B(X)|_X}\leq\forb(|X|,2,K_2)=\abs{X}+1$ by Theorem \ref{thm:sauer}. 

If, however, $\{i,j,k\}\subset X$, then consider $B(X)|_{\{i,j,k\}}$. If $ij,jk,ki$ are all edges of $T$, the only possible columns in $B(X)|_{\{i,j,k\}}$ are $\begin{bmatrix}0\\0\\0\end{bmatrix}$ and $\begin{bmatrix}1\\1\\1\end{bmatrix}$. So $B(X)|_{X\setminus\{i,j\}}$ is simple and avoids $K_2$. Thus $\abs{B(X)}=\abs{B(X)|_{X\setminus\{i,j\}}}\leq\forb(|X|-2,2,K_2)=\abs{X}-1$. If $ij,jk$ are edges of $T$ but there is no edge between $i$ and $k$, there are two cases. If rows $i,k$ are associated with $\begin{bmatrix}0\\0\end{bmatrix}$, then any column in $B(X)|_{\{i,j,k\}}$ has a $1$ in entry $i$ so ${B(X)|_{X\setminus\{i\}}}$ is simple and avoids $K_2$. If rows $i,k$ are associated with $\begin{bmatrix}1\\1\end{bmatrix}$, then any column in $B(X)|_{\{i,j,k\}}$ has a $0$ in entry $k$ so ${B(X)|_{X\setminus\{k\}}}$ is simple and avoids $K_2$. In both cases, we have $\abs{B(X)}\leq\forb(|X|-1,2,K_2)=\abs{X}$.

\indent Therefore, if $ij,jk,ki$ are all edges of $T$, by counting columns in $B$ according to their support, we have
\begin{align*}
\abs{B}&\leq\sum_{\substack{X\subset[m]\\\{i,j,k\}\not\subset X}}(\abs{X}+1)+\sum_{\substack{X\subset[m]\\\{i,j,k\}\subset X}}(\abs{X}-1)\\
&=\sum_{X\subset[m]}(\abs{X}+1)-2\sum_{\substack{X\subset[m]\\\{i,j,k\}\subset X}}1\\
&=\sum_{j=0}^m\binom{m}{j}(j+1)-2\sum_{j=0}^{m-3}\binom{m-3}{j}=2^m+m2^{m-1}-2^{m-2}.    
\end{align*}
A similar calculation shows that $\abs{B}\leq2^m+m2^{m-1}-2^{m-3}$ if $ij,jk$ are edges of $T$ but neither $ik$ nor $ki$ is. Both contradict the assumption that $\abs{B}>2^m+m2^{m-1}-2^{m-3}$, which completes the proof.
\end{proof}

Note that if $\abs{A}>2^m+m2^{m-1}-2^{m-3}+(p-1)\binom{m}{2}$, then as $|C|\leq(p-1)\binom m2$, we must have $|B|>2^m+m2^{m-1}-2^{m-3}$. Therefore, by Lemma \ref{transitive}, $T$ is transitive, and hence the directed comparability graph of a finite poset $P$. Thus, by taking a linear extension of $P$, we can rearrange the rows of $A$ so that no pair $1\leq i<j\leq m$ is associated with $\begin{bmatrix}1\\0\end{bmatrix}$. In particular, unless $(i,j)$ is associated with $\begin{bmatrix}0\\0\end{bmatrix}$ or $\begin{bmatrix}1\\1\end{bmatrix}$, there is no occurrence of 0 above 1 in rows $i,j$ in $B$.

\subsection{Asymptotics of $\forb(m,3,p\cdot K_2)-\forb(m,3,p\cdot I_2)$}\label{sec:gp}
By \cite{dillon2021exponential}, if $2^{m-2}\geq p-1$, then $\forb(m,3,p\cdot K_2)=2^m+m2^{m-1}+(p-1)\binom m2$, but the exact value of $\forb(m,3,p\cdot I_2)$ is not known for general $p$. In this subsection, we determine asymptotically the difference between $\forb(m,3,p\cdot K_2)$ and $\forb(m,3,p\cdot I_2)$.

We first give a lower bound construction that uses only 0-mark and 1-mark columns.
\begin{lemma}\label{prelimlower}
$\forb(m,3,p\cdot I_2)\geq2^m+m2^{m-1}+(p-1)\left(\binom{m}{2}-\binom{r+1}{2}\right)+(r-1)2^r+1$.
\end{lemma}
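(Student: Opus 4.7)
The plan is to exhibit an explicit construction. I would let $B$ be the set of all $m$-rowed columns $v\in\{0,1,2\}^m$ such that no pair of rows $(i,j)$ with $i<j$ has $(v_i,v_j)=(0,1)$; equivalently, all $1$-entries of $v$ lie at strictly smaller row-indices than all $0$-entries, with $2$'s freely interspersed. Grouping these columns by the smallest row-index that contains a $0$ (or handling separately the case with no $0$-entry, in which case $v\in\{1,2\}^m$), a direct count gives $\abs{B}=2^m+m\cdot 2^{m-1}$.

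Next, for each pair $(i,j)$ with $i<j$, I would define $X_{i,j}$ as the collection of columns $v$ with $v_i=0$, $v_j=1$, $v_k\in\{1,2\}$ for $k<i$, $v_k=2$ for $i<k<j$, and $v_k\in\{0,2\}$ for $k>j$. A short case check across all pairs $(i',j')\neq(i,j)$ with $i'<j'$ would verify that every column in $X_{i,j}$ has its unique $(0,1)$-pattern at $(i,j)$ and at no other pair; hence $\abs{X_{i,j}}=2^{i-1}\cdot 2^{m-j}=2^{(i-1)+(m-j)}$. I then set $A=B\cup C$, where $C=\bigcup_{i<j}X'_{i,j}$ and $X'_{i,j}\subseteq X_{i,j}$ is any chosen subset of size $\min\{p-1,\,\abs{X_{i,j}}\}$.

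Simplicity of $A$ would follow because the $X_{i,j}$ are pairwise disjoint (each element has a unique identifying mark) and all are disjoint from $B$. Avoidance of $p\cdot I_2$ would follow because for each pair $(i,j)$ the total number of $(0,1)$-patterns in rows $(i,j)$ of $A$ is exactly $\abs{X'_{i,j}}\leq p-1$; since a copy of $p\cdot I_2$ requires $p$ occurrences of both $\begin{bmatrix}0\\1\end{bmatrix}$ and $\begin{bmatrix}1\\0\end{bmatrix}$ in some ordered pair of rows, and we will have capped the $(0,1)$-direction, no such copy can exist.

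For the count, $\abs{A}=\abs{B}+\sum_{i<j}\min\{p-1,\,\abs{X_{i,j}}\}$. Since $2^{r-1}<p-1\leq 2^r$, the inequality $\abs{X_{i,j}}\geq p-1$ holds precisely when $(i-1)+(m-j)\geq r$, i.e.\ $j-i\leq m-r-1$; such ``non-short'' pairs number $\binom{m}{2}-\binom{r+1}{2}$ and each contributes $p-1$ to the sum. For the ``short'' pairs with $j-i=m-k$, $k\in\{1,\ldots,r\}$, there are exactly $k$ pairs with $\abs{X_{i,j}}=2^{k-1}$, contributing in total $\sum_{k=1}^{r}k\cdot 2^{k-1}=(r-1)2^r+1$ via the identity $\sum_{k=1}^{r}k\cdot 2^k=(r-1)2^{r+1}+2$. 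Summing the three contributions yields the claimed lower bound; the only non-routine step is this final piece of bookkeeping, as the construction itself is essentially forced once one commits to unique-mark columns, and the verifications of simplicity and avoidance are immediate.
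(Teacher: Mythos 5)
Your proposal is correct and is essentially the paper's construction: associate every pair of rows with the $(0,1)$-pattern, take all $2^m+m2^{m-1}$ columns with no occurrence of a $0$ above a $1$, and for each pair $(i,j)$ add $\min\{p-1, 2^{(i-1)+(m-j)}\}$ columns whose unique $(0,1)$-occurrence is at $(i,j)$. Your bookkeeping groups scarce pairs by $k=m-(j-i)$ rather than the paper's double sum over $(i,j)$, but the resulting computation $\sum_{k=1}^r k\cdot 2^{k-1}=(r-1)2^r+1$ is the same.
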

\begin{proof}
Let every pair $i<j$ of rows be associated with $\begin{bmatrix}0\\1\end{bmatrix}$, so a mark in a column corresponds exactly to an instance of 0 above 1.     

If a column $v$ contains a unique mark at $(i,j)$, then $v_{\ell}\in\{1,2\}$ for all $1\le\ell<i$, $v_{\ell}=2$ for all $i<\ell<j$, and $v_{\ell}\in\{0,2\}$ for $j<\ell\le m$. Thus, there are exactly $2^{m-1+i-j}$ such columns, and we can include $\min\{p-1,2^{m-1+i-j}\}$ of them. Let $r=\ceil{\log_2{(p-1)}}$, then $2^{m-1+i-j}<p-1$ if and only if $i\in[r]$ and $m-r+i\leq j\leq m$. Therefore, the number of 1-mark columns we can include is
\[(p-1)\left(\binom{m}{2}-\binom{r+1}{2}\right)+\sum_{i=1}^{r}\sum_{j=m-r+i}^{m}2^{m-1+i-j}=(p-1)\left(\binom{m}{2}-\binom{r+1}{2} \right)+(r-1)2^r+1.\]
Since there are $2^m+m2^{m-1}$ columns with no 0 above 1, and we can always include all of them, the result follows.
\end{proof}

If $A\in\ext(m,3,p\cdot I_2)$, let $A=\begin{bmatrix}B\mid C\end{bmatrix}$ be the standard decomposition. Since $|A|\geq2^m+m2^{m-1}+(p-1)\left(\binom{m}{2}-\binom{r+1}{2}\right)+(r-1)2^r+1$ by Lemma \ref{prelimlower}, and $|C|\leq(p-1)\binom{m}2$ from definition, we have that $|B|>2^m+m2^{m-1}-2^{m-3}$ as long as $m\geq 2r+2$. Hence, by Lemma~\ref{transitive}, we can suitably permute the rows of $A$ so that every pair $i<j$ of rows is associated with $\begin{bmatrix}0\\1\end{bmatrix}$. Note that in this case the directed graph $T$ is actually a tournament as there are no non-edges. 

Motivated by Lemma \ref{prelimlower} above,  we say a pair of rows $i<j$ is \textit{abundant} if there are at least $p-1$ possible columns with exactly one mark at $(i,j)$, or equivalently if $2^{m-1+i-j}\geq p-1$. A pair of rows $i<j$ is called \textit{scarce} if it is not abundant. It follows that if $r=\ceil{\log_2{(p-1)}}$, then there are $\binom{r+1}{2}$ scarce pairs of rows, which are exactly $(i,j)$ for $i\in[r]$ and $m-r+i\leq j\leq m$. The next lemma shows that we can always assume the maximum number of $1$-mark columns are present in $A$. 

\begin{lemma}\label{maxonemark}
There exists $A\in\ext(m,3,p\cdot I_2)$ containing $2^m+m2^{m-1}$ 0-mark columns and $(p-1)\left(\binom{m}{2}-\binom{r+1}{2}\right)+(r-1)2^r+1$ $1$-mark columns. More precisely, for every abundant pair $(i,j)$ of rows, $A$ contains exactly $p-1$ columns with their only mark at $(i,j)$, and for every scarce pair $(i,j)$ of rows, $A$ contains all columns with their only mark at $(i,j)$.
\end{lemma}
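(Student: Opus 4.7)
The plan is to begin with any $A\in\ext(m,3,p\cdot I_2)$ and, via two swap arguments, modify it into an extremal matrix of the prescribed form. After the row permutation afforded by the discussion following Lemma~\ref{transitive}, I will assume every pair $(i,j)$ with $i<j$ is associated with $\begin{bmatrix}0\\1\end{bmatrix}$, so the standard decomposition reads $A=\begin{bmatrix}B\mid C\end{bmatrix}$. For each pair $(i,j)$, I write $c_{ij}$ for the number of columns $v$ of $A$ with $(v_i,v_j)=(0,1)$, so $c_{ij}\le p-1$ by definition of the association.

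The first step is to force $\abs{B}=2^m+m2^{m-1}$. The upper bound $\abs{B}\le 2^m+m2^{m-1}$ follows immediately since $B\in\Avoid(m,3,K_2)$. For the matching lower bound, I would argue that if any column $w$ with no 0-above-1 pattern were missing from $A$, then $A+w$ is simple and has the same $c_{ij}$ values as $A$ (since $w$ contributes no $\begin{bmatrix}0\\1\end{bmatrix}$), so $A+w\in\Avoid(m,3,p\cdot I_2)$, contradicting extremality of $A$.

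Next I would, among extremal matrices with $B$ full, pick $A$ maximizing the number of 1-mark columns. For each pair $(i,j)$, letting $n_{ij}$ count columns of $A$ whose unique mark is at $(i,j)$, I aim to show $n_{ij}=\min\{p-1,2^{m-1+i-j}\}$ at every pair. If not, some pair satisfies $n_{ij}<\min\{p-1,2^{m-1+i-j}\}$, so some 1-mark column $v$ at $(i,j)$ is missing from $A$ and $n_{ij}\le p-2$. If $c_{ij}<p-1$, then $A+v$ is simple, still in $\Avoid(m,3,p\cdot I_2)$, and larger than $A$, contradicting extremality. Otherwise $c_{ij}=p-1$, and since $n_{ij}<c_{ij}$ some multi-mark column $u$ of $A$ must have a mark at $(i,j)$; the swap $A'=A-u+v$ preserves $\abs{A}$ and simplicity, leaves $c'_{ij}=p-1$ and only decreases other $c_{k\ell}$'s, so $A'\in\ext(m,3,p\cdot I_2)$ has one more 1-mark column than $A$, contradicting our choice.

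The main delicacy is confirming this second swap is legal: $v\notin A$ follows from $n_{ij}<2^{m-1+i-j}$, the existence of a multi-mark column $u$ with a mark at $(i,j)$ follows from $c_{ij}-n_{ij}\ge 1$, and $v\ne u$ since they carry different numbers of marks. Once $n_{ij}=\min\{p-1,2^{m-1+i-j}\}$ is secured at every pair, summing over abundant pairs (contributing $p-1$ each) and scarce pairs (contributing $2^{m-1+i-j}$ each) recovers the claimed total of $(p-1)\bigl(\binom{m}{2}-\binom{r+1}{2}\bigr)+(r-1)2^r+1$ 1-mark columns together with the fine-grained per-pair distribution required by the lemma.
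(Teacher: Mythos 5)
Your proof is correct and follows essentially the same route as the paper: take an extremal $A$ that (after the row permutation guaranteed by Lemma~\ref{transitive}) contains all $2^m+m2^{m-1}$ mark-free columns, then among such $A$ maximize the number of $1$-mark columns, and show any deficit at a pair $(i,j)$ can be repaired either by inserting a missing $1$-mark column (when $c_{ij}<p-1$) or by the swap $A-u+v$ (when $c_{ij}=p-1$), with the legality checks you flag ($v\notin A$, $u$ exists, $u\neq v$) being exactly the points the paper leaves implicit. Your case split on $c_{ij}<p-1$ versus $c_{ij}=p-1$ is marginally cleaner than the paper's split on $c_{ij}<\min\{p-1,2^{m-1+i-j}\}$ versus equality, but the argument is the same.
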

\begin{proof}
We can always take all $2^m+m2^{m-1}$ columns with no mark. Let $A\in\ext(m,3,p\cdot I_2)$ maximize the number of 1-mark columns. Assume for some pair $i<j$ of rows, $A$ does not contain $\min\{2^{m-1+i-j},p-1\}$ columns with their unique marks at $(i,j)$. Then there exists a column $v$ with its only mark at $(i,j)$ that is not in $A$. There are two cases. If $A$ contains less than $\min\{2^{m-1+i-j},p-1\}$ marks at $(i,j)$,  $\begin{bmatrix}A\mid v\end{bmatrix}$ is then in $\Avoid(m,3,p\cdot I_2)$ with more columns than $A$, contradiction. If $A$ contains exactly $\min\{2^{m-1+i-j},p-1\}$ marks at $(i,j)$, then $A$ contains a column $u$ with at least two marks including a mark at $(i,j)$. Remove column $u$ and add column $v$ to get a new matrix $A'$. Then $A'$ is in $\ext(m,3,p\cdot I_2)$ with one more $1$-mark column than $A$, contradiction. Thus, for all pair $i<j$ of rows, $A$ contains $\min\{2^{m-1+i-j},p-1\}$ columns with their unique marks at $(i,j)$, which add up to $(p-1)\left(\binom{m}{2}-\binom{r+1}{2}\right)+(r-1)2^r+1$ $1$-mark columns in total. 
\end{proof}

We can now prove Proposition \ref{prop:gpindep}, which shows that $\forb(m,3,p\cdot K_2)-\forb(m,3,p\cdot I_2)$ is the same constant for all $m\geq 2r+2$.
\begin{proof}[Proof of Proposition \ref{prop:gpindep}]
Since $\forb(m,3,p\cdot K_2)=2^m+m2^{m-1}+(p-1)\binom{m}{2}$ by \cite{dillon2021exponential}, it suffices to show that for all $m\geq 2r+2$
\begin{align*}
    \forb(m+1,3,p\cdot I_2) - \forb(m,3,p\cdot I_2) &= \forb(m+1,3,p\cdot K_2) - \forb(m,3,p\cdot K_2)\\
    &= 2^{m-1}(m+4) + (p-1)m.
\end{align*}

First we prove that $\forb(m+1,3,p\cdot I_2) - \forb(m,3,p\cdot I_2) \geq 2^{m-1}(m+4) +(p-1)m$. Let $A\in\ext(m,3,p\cdot I_2)$. From above and by Lemma~\ref{transitive} we can assume $A$ contains at most $p-1$ instances of $0$ above $1$ in each pair $i<j$ of rows. $A$ must contain all $2^m + m2^{m-1}$ columns with no mark. Let $S$ denote the number of columns in $A$ with at least one mark. 

Consider the following matrix $A'$ with $m+1$ rows. First take all $2^{m+1} + (m+1)2^{m}$ columns with no $0$ above $1$. Then, fix an index $r+1\leq k\leq m + 1 - r$, which exists as $m\geq2r+2$. For every column $v$ in $A$ with at least one mark, define a new column $v'$ with $m+1$ rows by $v_i'=v_i$ if $i<k$, $v_k'=2$ and $v_i'=v_{i-1}$ if $k<i\leq m+1$. The new columns $v'$ obtained in this way are all distinct and have marks only on row set $\{1,2,\ldots ,k-1,k+1,\ldots m+1\}$ corresponding to the marks in the original columns. Add them all to $A'$. Moreover, for each $1 \leq j < k$, the pair $(j,k)$ is abundant so we can find $p-1$ columns with their unique marks at $(j,k)$. Similarly, for each $k < j\leq m+1$, the pair $(k,j)$ is abundant so we can find $p-1$ columns with their unique marks at $(k,j)$. Add all such $(p-1)m$ columns to $A'$. Then for any two rows $i<j$ of $A'$, there are at most $p-1$ columns with 0 above 1 by construction, so $A' \in \Avoid(m+1, 3, p \cdot I_2)$, which implies $\forb(m+1, 3, p\cdot I_2) \geq \abs{A'}  = 2^{m+1} + (m+1)2^{m} + S + (p-1)m$. Therefore,
\begin{align*}
    &\phantom{{}\geq{}}\forb(m+1, 3, p \cdot I_2) - \forb(m, 3, p \cdot I_2)\\
    &\geq \left( 2^{m+1} + (m+1)2^{m} + S + (p-1)m \right ) - \left (2^m + m2^{m-1} + S \right)\\
    &= 2^{m-1}(m+4) + (p-1)m.
\end{align*}

Now we prove that $\forb(m+1,3,p\cdot I_2) - \forb(m,3,p\cdot I_2) \leq 2^{m-1}(m+4) + (p-1)m$. Let $A\in\ext(m+1,3,p\cdot I_2)$. We can assume that $A$ has all $2^{m+1} + (m+1)2^{m}$ columns with no mark. By Lemma \ref{maxonemark}, we can also assume $A$ has the maximum number $(p-1)\left (\binom{m+1}{2}-\binom{r+1}{2} \right )+(r-1)2^r+1$ of $1$-mark columns. 

\indent Fix a row index $k$ with $r+1\leq k\leq m+1-r$. Let $S$ be the number of columns in $A$ with at least two marks and let $v$ be any one of them. For any abundant pair $(i,j)$, every column with a mark at $(i,j)$ is a $1$-mark column by Lemma \ref{maxonemark}, so all marks in $v$ correspond to scarce pairs. In particular, if $v$ has a mark at $(i,j)$, then $i\leq r<k<m+2-r\leq j$, which implies that $v_k=2$, as otherwise $v$ will have a mark at abundant pair $(i,k)$ or $(k,j)$. It follows that all columns in $A$ with at least two marks remain distinct and do not lose any mark after removing row $k$. Now we construct an $m$-rowed matrix $A'$ by taking all $2^m+m2^{m-1}$ columns with no mark, $(p-1)(\binom{m}{2}-\binom{r+1}{2})+(r-1)2^r+1$ columns with exactly one mark, as well as all columns obtained by removing the row $k$ entries from all columns in $A$ with at least two marks. Then $A'\in\Avoid(m,3,p\cdot I_2)$, so we have
\begin{align*}
\forb(m,3,p\cdot I_2)\geq\abs{A'}&=\left(2^m+m2^{m-1}\right)+(p-1)\left(\binom{m}{2}-\binom{r+1}{2}\right)+(r-1)2^r+1+S\\
&=\forb(m+1,3,p\cdot I_2)-2^{m-1}(m+4)-(p-1)m.
\end{align*}
This completes the proof.
\end{proof}

This allows us to define $g_p$ to be the common value of $\forb(m,3,p\cdot K_2)-\forb(m,3,p\cdot I_2)$ for all $m\geq 2r+2$. By \cite{dillon2021exponential}, we have $g_1=g_2=0, g_3=1, g_4=2$ and $g_5=5$. In the remaining part of this section, we prove Theorem \ref{thm:gpasymp}, which establishes the asymptotic formula $g_p\sim\frac12p(\log_2p)^2$. As will be seen later, it follows from Lemma \ref{prelimlower} that $g_p\leq\frac12p(\log_2p)^2+o(p(\log_2p)^2)$. To prove the reverse inequality, we use the following two results.

\begin{lemma}\label{foursigma}
Let $m\geq 2r+2$. Suppose $b,c\geq 1$ and $b+c\leq r+1$. Then there are at most $2^r(r-b-c+1)+2^{b+c-2}\leq r2^r$ $m$-rowed $0,1,2$-columns with exactly $bc$ marks formed by $c$ 0's above $b$ 1's, all corresponding to scarce pairs of rows.
\end{lemma}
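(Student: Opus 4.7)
The plan is to characterize the columns in question, count them exactly via a Vandermonde convolution, and then invoke a classical central-binomial estimate. First, I would argue that any such $m$-rowed $\{0,1,2\}$-column must have its $c$ zeros at rows forming a set $I$ and its $b$ ones at rows forming a set $J$ with $\max I<\min J$ (so as to produce exactly $bc$ marks), and all other entries equal to $2$. For every resulting mark $(i,j)$ (with $i\in I$ and $j\in J$) to be a scarce pair, it suffices and is necessary to impose scarcity on the extremal pair $(\max I,\min J)$, which rearranges to $I\subseteq[r]$ and $\min J\geq m-r+\max I$.

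Next, I would parameterize by $k=\max I\in\{c,c+1,\ldots,r-b+1\}$. For each such $k$, the remaining $c-1$ zero-rows may be chosen from $[k-1]$ in $\binom{k-1}{c-1}$ ways, and the $b$ one-rows from the $(r-k+1)$-element set $\{m-r+k,\ldots,m\}$ in $\binom{r-k+1}{b}$ ways. The substitution $u=k-1$ converts the resulting sum into the standard Vandermonde convolution $\sum_{u}\binom{u}{c-1}\binom{r-u}{b}=\binom{r+1}{b+c}$, so the total count is exactly $\binom{r+1}{s}$ where $s=b+c$.

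Finally, I would bound $\binom{r+1}{s}$ using the classical inequality $\binom{n}{\lfloor n/2\rfloor}\leq 2^{n-1}$ (valid for $n\geq 2$), which yields $\binom{r+1}{s}\leq 2^r$ for every $r\geq 1$ and every $0\leq s\leq r+1$. When $s\leq r$, we have $r-s+1\geq 1$, so $\binom{r+1}{s}\leq 2^r\leq 2^r(r-s+1)+2^{s-2}$. When $s=r+1$, the first summand vanishes and we instead use $\binom{r+1}{r+1}=1\leq 2^{r-1}=2^{s-2}$ (since $r\geq 1$). The secondary inequality $2^r(r-s+1)+2^{s-2}\leq r\cdot 2^r$ for $2\leq s\leq r+1$ is an elementary algebraic check.

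The only real care is in the scarcity reduction and the Vandermonde step; once the count is recognized as $\binom{r+1}{s}$, the claimed bound reduces to well-known central-binomial estimates, so I anticipate no serious obstacle beyond bookkeeping.
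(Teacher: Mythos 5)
Your proposal contains a fundamental error at the very first step, where you characterize the columns being counted. You claim that in any such column the $c$ zeros occupy a set $I$, the $b$ ones occupy a set $J$ with $\max I<\min J$, and ``all other entries equal $2$.'' That last clause is false. An entry in a row \emph{above} $\min I$ can be either $1$ or $2$ (it cannot be $0$, else it would create further marks with the rows of $J$, but $1$ is harmless), and an entry in a row \emph{below} $\max J$ can be either $0$ or $2$. Only the rows strictly between $\min I$ and $\max J$ that are not in $I\cup J$ are forced to equal $2$. The paper explicitly records this: each column comes with a free factor of $2^{i_1-1}\cdot 2^{m-j_b}$ where $i_1=\min I$ and $j_b=\max J$, and that exponential factor is the whole reason the answer is of order $r2^r$ rather than polynomial in $r$. (Compare the one-mark count $2^{m-1+i-j}$ used throughout Section~2, e.g.\ in Lemma~\ref{prelimlower}.)

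Because you dropped this factor, your tally of the columns reduces to a pure choice of the sets $I\subseteq[r]$ and $J\subseteq\{m-r+\max I,\dots,m\}$, which via Vandermonde gives $\binom{r+1}{b+c}$. A small sanity check shows this is too small: with $b=c=1$, $r=2$, $m=6$, there are three scarce pairs $(1,5),(1,6),(2,6)$, and the numbers of columns with a unique mark at each are $2,1,2$ respectively, for a total of $5$; your formula gives $\binom{3}{2}=3$. The paper's bound $2^r(r-b-c+1)+2^{b+c-2}=5$ is exactly right here. To repair the argument you would need to reinstate the weight $2^{i_1-1+m-j_b}$ inside the sum over $I$ and $J$, which is precisely the multi-sum the paper evaluates (pushing two of the binomial sums to infinity to get the factor $2\cdot 2$) rather than a Vandermonde convolution. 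The final inequality $2^r(r-b-c+1)+2^{b+c-2}\leq r2^r$ you state at the end is, in contrast, a routine verification and unproblematic.
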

\begin{proof}
Let the positions of the $c$ 0's be $i_1<\cdots<i_c$ and the positions of the $b$ 1's be $j_1<\cdots<j_b$. All $bc$ marks correspond to scarce pairs if and only if $j_1-i_c\geq m-r$. Moreover, any such column $v$ satisfies $v_k=2$ for all $i_1<k<j_b$ that is not one of $i_1,\ldots,i_c$ or $j_1,\ldots,j_b$. Therefore, if $b,c\geq2$, then the number of such columns is

\begin{align*}
&\phantom{{}={}}\sum_{i_c=c}^{r-b+1}\sum_{j_1=m+i_c-r}^{m-b+1}\sum_{i_1=1}^{i_c-c+1}\binom{i_c-i_1-1}{c-2}\sum_{j_b=j_1+b-1}^{m}\binom{j_b-j_1-1}{b-2}2^{i_1-1+m-j_b}\\
&=\sum_{i_c=c}^{r-b+1}\sum_{j_1=m+i_c-r}^{m-b+1}\sum_{i=c-2}^{i_c-2}\binom{i}{c-2}\sum_{j=b-2}^{m-j_1-1}\binom{j}{b-2}2^{m-3+i_c-j_1-i-j}\\
&=\sum_{i_c=c}^{r-b+1}\sum_{j_1=m+i_c-r}^{m-b+1}2^{m-3+i_c-j_1}\left(\sum_{i=c-2}^{i_c-2}\binom{i}{c-2}2^{-i}\right)\left(\sum_{j=b-2}^{m-j_1-1}\binom{j}{b-2}2^{-j}\right)\\
&\leq\sum_{i_c=c}^{r-b+1}\sum_{j_1=m+i_c-r}^{m-b+1}2^{m-3+i_c-j_1}\left(\sum_{i=c-2}^{\infty}\binom{i}{c-2}2^{-i}\right)\left(\sum_{j=b-2}^{\infty}\binom{j}{b-2}2^{-j}\right)\\
&=\sum_{i_c=c}^{r-b+1}\sum_{j_1=m+i_c-r}^{m-b+1}2^{m-3+i_c-j_1}\cdot2\cdot2=\sum_{i_c=c}^{r-b+1}2^{i_c}\sum_{j_1=m+i_c-r}^{m-b+1}2^{m-1-j_1}\\
&=\sum_{i_c=c}^{r-b+1}2^{i_c}(2^{r-i_c}-2^{b-2})=\sum_{i_c=c}^{r-b+1}(2^r-2^{i_c+b-2})\\
&=2^r(r-b-c+1)+2^{b+c-2}\leq r2^r,  
\end{align*}
as required. The cases when $b=1$ or $c=1$ can be verified with similar calculations. 
\end{proof}

\begin{prop}\label{gplower}
Let $k\geq 2$ be fixed, then we have $g_p\geq\frac{(p-1)k}{k+1}\binom{r+1}{2}-r2^r\sum_{i=1}^k\tau(i)$, where $\tau(n)$ is the number of divisors of $n$.
\end{prop}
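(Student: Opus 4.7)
The plan is to start from an extremal $A\in\ext(m,3,p\cdot I_2)$ with $m\geq 2r+2$, put it into the canonical form supplied by Lemmas~\ref{transitive} and \ref{maxonemark}, and then carefully bound the number of \emph{multi-mark} columns. By Lemma~\ref{prelimlower} combined with Lemma~\ref{transitive} (both applicable since $m\geq 2r+2$), we may reorder the rows so that every pair $i<j$ is associated with $\begin{bmatrix}0\\1\end{bmatrix}$. Lemma~\ref{maxonemark} then lets us further assume that $A$ contains all $2^m+m2^{m-1}$ no-mark columns and the maximum number $(p-1)\bigl(\binom{m}{2}-\binom{r+1}{2}\bigr)+(r-1)2^r+1$ of single-mark columns. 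Writing $M$ for the number of multi-mark columns in $A$, and using $\forb(m,3,p\cdot K_2)=2^m+m2^{m-1}+(p-1)\binom{m}{2}$, this reduces the proposition to an upper bound on $M$ via the identity
\[g_p=(p-1)\binom{r+1}{2}-(r-1)2^r-1-M.\]

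The first ingredient is a slot-counting inequality. Because each abundant pair $(i,j)$ already has its full quota of $p-1$ marks supplied by single-mark columns, every mark appearing in a multi-mark column must lie at a scarce pair, and the remaining capacity of the scarce pair $(i,j)$ for multi-mark marks is $p-1-2^{m-1+i-j}$. Summing gives
\[\sum_{s\geq 2}sM_s\leq S^*:=(p-1)\binom{r+1}{2}-(r-1)2^r-1,\]
where $M_s$ denotes the number of multi-mark columns with exactly $s$ marks.

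The second ingredient is a structural bound supplied by Lemma~\ref{foursigma}. For any multi-mark column $v$, I would let $S'$ (resp.\ $T'$) be the set of $0$-positions (resp.\ $1$-positions) contributing to at least one mark. Since each such mark must be scarce and $m\geq 2r+2$, one checks that $S'\subseteq[r]$, $T'\subseteq[m-r+1,m]$, and $\max S'<\min T'$; setting $c=|S'|$, $b=|T'|$, all $bc$ potential pairs yield marks, so $s=bc$ and $v$ fits the hypotheses of Lemma~\ref{foursigma}. Summing its $r2^r$ bound over the $\tau(s)$ ordered factorisations of $s$ gives $M_s\leq\tau(s)\,r2^r$ for every $s\geq 2$.

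To finish I would split $M=\sum_{s=2}^kM_s+\sum_{s>k}M_s$, bounding the first sum by $r2^r\sum_{s=2}^k\tau(s)$ and the second by $\frac{1}{k+1}\sum_{s>k}sM_s\leq\frac{S^*}{k+1}$. Substituting into $g_p=S^*-M$ yields $g_p\geq\frac{k}{k+1}S^*-r2^r\sum_{s=2}^k\tau(s)$, and since a one-line calculation gives $\frac{k}{k+1}[(r-1)2^r+1]\leq r2^r=r2^r\tau(1)$, this rearranges to the desired bound $g_p\geq\frac{(p-1)k}{k+1}\binom{r+1}{2}-r2^r\sum_{s=1}^k\tau(s)$. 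I expect the main obstacle to be the execution of the structural step: one has to verify that any ``extra'' $0$'s or $1$'s in $v$ which do not themselves create marks are forced to lie strictly outside the interval $[\min S',\max T']$ and cause no interference, so that each multi-mark column is counted exactly once by Lemma~\ref{foursigma} under a well-defined $(b,c)$.
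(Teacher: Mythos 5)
Your proposal is correct and follows essentially the same route as the paper's proof: both use Lemmas~\ref{transitive} and~\ref{maxonemark} to normalise $A$, invoke Lemma~\ref{foursigma} together with the ``no nested scarce pairs'' consequence of $m\geq 2r+2$ to bound the number of $s$-mark columns by $\tau(s)\,r2^r$ for $2\leq s\leq k$, use the total-marks budget over scarce pairs to control the tail $\sum_{s>k}$ via a factor $\frac{1}{k+1}$, and finally absorb the leftover $\frac{k}{k+1}[(r-1)2^r+1]$ into the $\tau(1)$ term of $r2^r\sum_{i=1}^k\tau(i)$. The only cosmetic difference is that the paper works with the sequence $(a_i)$ and a slack variable $S$ while you isolate the multi-mark budget $S^*$ up front; the structural point you flagged at the end (that a multi-mark column with all-scarce marks decomposes as $c$ fixed $0$-rows above $b$ fixed $1$-rows) is exactly what the paper records with ``there does not exist $x<y<x'<y'$ such that both $(x,y)$ and $(x',y')$ are scarce.''
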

\begin{proof}
Let $A\in\ext(m,3,p\cdot I_2)$. As before we can assume $A$ contains at most $p-1$ instances of 0 above 1 in every pair of rows, and contains the maximum number of 1-mark columns. For $j\geq 0$, let $a_j$ be the number of $j$-mark columns in $A$. Note that if $j\geq 2$ and $v$ is a $j$-mark column, then every mark in $v$ corresponds to a scarce pair of rows. From before, we have $a_0=2^m+m2^{m-1}$ and $a_1=(p-1)\left(\binom{m}{2}-\binom{r+1}{2}\right)+(r-1)2^r+1$. Since there are at most $p-1$ marks at each pair of rows, we have $\sum_{i=1}^{\infty}ia_i\leq(p-1)\binom{m}{2}$. Let $\sum_{i=1}^{\infty}ia_i=(p-1)\binom{m}{2}-S$.

\indent Let $m\geq 2r+2$, $2\leq i\leq k$, and consider an $m$-rowed $0,1,2$-column $v$ with exactly $i$ marks, all of which are scarce. Since $m\geq 2r+2$, there does not exist $x<y<x'<y'$ such that both $(x,y)$ and $(x',y')$ are scarce. So there exists $b,c$ with $bc=i$ such that all $i$ marks of $v$ are formed as $c$ 0's above $b$ 1's. Thus Lemma \ref{foursigma} shows there are at most $\tau(i)r2^r$ such columns, and so $a_i\leq\tau(i)r2^r$. 

\indent We have 
\[
\sum_{i=k+1}^{\infty}a_i\le \sum_{i=k+1}^{\infty}\frac{1}{k+1}ia_i=\frac{1}{k+1}\left((p-1)\binom{m}{2}-S-\sum_{i=1}^kia_i\right)
\]
and therefore $$\sum_{i=1}^{\infty}a_i=\sum_{i=1}^{k}a_i+\sum_{i=k+1}^{\infty}a_i\leq \frac{1}{k+1}\left((p-1)\binom{m}{2}-S\right)+\sum_{i=1}^k\left(1-\frac{i}{k+1}\right)a_i.$$

\indent It follows that
\begin{align*}
g_p&=\forb(m,3,p\cdot K_2)-\forb(m,3,p\cdot I_2)=\forb(m,3,p\cdot K_2)-\abs{A}\\
&=(p-1)\binom{m}{2}-\sum_{i=1}^{\infty}a_i\\
&\geq\frac{(p-1)k}{k+1}\binom{r+1}{2}+\frac{S}{k+1}-\frac{k}{k+1}((r-1)2^r+1)-\sum_{i=2}^k\left(1-\frac{i}{k+1}\right)a_i\\
&\geq\frac{(p-1)k}{k+1}\binom{r+1}{2}-r2^r-\sum_{i=2}^ka_i\\
&\geq\frac{(p-1)k}{k+1}\binom{r+1}{2}-r2^r\sum_{i=1}^k\tau(i).\qedhere
\end{align*}
\end{proof}

\begin{proof}[Proof of Theorem \ref{thm:gpasymp}]
By Lemma \ref{prelimlower}, $\forb(m,3,p\cdot I_2)\ge2^m+m2^{m-1}+(p-1)(\binom{m}{2}-\binom{r+1}2)+(r-1)2^r+1$. Hence, 
\begin{align*}
g_p&\leq\forb(m,3,p\cdot K_2)-2^m-m2^{m-1}-(p-1)\left(\binom{m}{2}-\binom{r+1}2\right)-(r-1)2^r-1\\
&=(p-1)\binom{r+1}2-(r-1)2^r-1=\frac12p(\log_2p)^2+o(p(\log_2p)^2).
\end{align*}

On the other hand, Proposition \ref{gplower} implies that for every fixed $k\geq2$, 
\[g_p\geq\frac{(p-1)k}{k+1}\binom{r+1}{2}-r2^r\sum_{i=1}^k\tau(i)=\left(1-\frac1{k+1}\right)\frac12p(\log_2p)^2+o(p(\log_2p)^2).\]
Combining these two bounds and letting $k\to\infty$, we get $g_p\sim\frac12p(\log_2p)^2$.
\end{proof}

\subsection{$p\leq4$}\label{subsec:p34}
In this subsection, we compute the exact values of $\forb(m,3,F(a,b,c,d))$ when $\max\{a,d\}<\min\{b,c\}=p\leq 4$. If $p=1,2$, then $\forb(m,3,p\cdot I_2)=\forb(m,3,p\cdot K_2)$ by \cite{dillon2021exponential}. Then, for large $m$, Lemma \ref{reduction} implies that $\forb(m,3,p\cdot I_2)\leq\forb(m,3,F(a,b,c,d))=\forb(m,3,F(a,p,p,d))\leq\forb(m,3,p\cdot K_2)$, so $\forb(m,3,F(a,b,c,d))=\forb(m,3,p\cdot I_2)=\forb(m,3,p\cdot K_2)=2^m+m2^{m-1}+(p-1)\binom m2$.

Next, we treat the cases when $p=3$ and $p=4$.
\begin{lemma}\label{minp}
For $p\geq 3$, let $1\leq k\leq\min\{p,g_p\}$. If $\min\{b,c\}=p$, $\max\{a,d\}\leq p-k$, $m\geq 2r+2$ and $2^{m-2}\geq(\max\{b,c\}-1)m^2$, then \[\forb(m,3,F(a,b,c,d))\leq\forb(m,3,p\cdot I_2)+g_p-k=\forb(m,3,p\cdot K_2)-k.\]
\end{lemma}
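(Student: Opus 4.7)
The plan is to reduce via Lemma~\ref{reduction} to the case $b=c=p$, then take any $A\in\Avoid(m,3,F(a,p,p,d))$ and analyze its standard decomposition $A=[B\mid C]$. The second equality in the conclusion is just the definition of $g_p$, so only the first inequality needs an argument. Everything hinges on whether $|B|$ is large enough to invoke Lemma~\ref{transitive}.

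If $|B|\leq 2^m+m2^{m-1}-2^{m-3}$, then combining with the general bound $|C|\leq(p-1)\binom{m}{2}$ gives $|A|\leq\forb(m,3,p\cdot K_2)-2^{m-3}$. Since $k\leq p$ and the hypothesis $2^{m-2}\geq(p-1)m^2$ easily implies $2^{m-3}\geq p\geq k$, we are done in this regime.

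Otherwise $|B|>2^m+m2^{m-1}-2^{m-3}$, so by Lemma~\ref{transitive} the directed graph $T$ is transitive, and after relabelling rows every pair $(i,j)$ with $i<j$ has association in $\left\{\begin{bmatrix}0\\1\end{bmatrix},\begin{bmatrix}0\\0\end{bmatrix},\begin{bmatrix}1\\1\end{bmatrix}\right\}$. Write $N=N_0+N_1$ for the total number of $0$-non-edges and $1$-non-edges. If $N=0$, every pair of rows of $A$ has at most $p-1$ copies of $\begin{bmatrix}0\\1\end{bmatrix}$, so $A\in\Avoid(m,3,p\cdot I_2)$ and $|A|\leq\forb(m,3,p\cdot I_2)=\forb(m,3,p\cdot K_2)-g_p\leq\forb(m,3,p\cdot K_2)-k$ using $k\leq g_p$. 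If $N\geq 1$, the refined mark count
\[|C|\leq(p-1)\left(\binom{m}{2}-N\right)+(a-1)N_0+(d-1)N_1\leq(p-1)\binom{m}{2}-kN\leq(p-1)\binom{m}{2}-k,\]
where the middle inequality uses $a,d\leq p-k$, combined with $|B|\leq 2^m+m2^{m-1}$, again yields $|A|\leq\forb(m,3,p\cdot K_2)-k$.

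The argument is essentially case-bookkeeping once Lemma~\ref{transitive} and the standard decomposition are in hand. The noteworthy point is that both halves of the hypothesis $k\leq\min\{p,g_p\}$ get used: $k\leq p$ absorbs the $2^{m-3}$ savings in the small-$B$ regime, while $k\leq g_p$ is exactly the input for the $N=0$ subcase. I expect no genuine obstacle, only the need to verify that the three sub-cases exhaust all possibilities and each saves at least $k$ columns relative to $\forb(m,3,p\cdot K_2)$.
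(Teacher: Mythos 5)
Your proof is correct and follows essentially the same route as the paper's: reduction to $b=c=p$ via Lemma~\ref{reduction}, standard decomposition, and the $N=0$ versus $N\geq1$ casework using the mark-count bound on $|C|$. However, the detour through Lemma~\ref{transitive} and the separate small-$|B|$ regime is unnecessary: when $N=0$ every pair $(i,j)$ is associated with $\begin{bmatrix}0\\1\end{bmatrix}$ or $\begin{bmatrix}1\\0\end{bmatrix}$ and carries at most $p-1$ copies of that vector, which already rules out $p\cdot I_2\prec A$ without any transitivity, and the $N\geq1$ bound uses only $|B|\leq2^m+m2^{m-1}$ (true for any $B\in\Avoid(m,3,K_2)$) together with $|C|\leq(p-1)\binom{m}{2}-N(p-\max\{a,d\})$; so the paper dispenses with your first case entirely.
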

\begin{proof}
The conditions ensure that Lemma \ref{reduction} applies, so we can assume that $b=c=p$, and that $\forb(m,3,p\cdot I_2)=\forb(m,3,p\cdot K_2)-g_p$ by Proposition \ref{prop:gpindep}. Let $A\in\Avoid(m,3,F(a,p,p,d))$ and let $A=\begin{bmatrix}B\mid C\end{bmatrix}$ be a standard decomposition. Recall from the definition that $N$ is the number of pairs of rows that are associated with either $\begin{bmatrix}0\\0
\end{bmatrix}$ or $\begin{bmatrix}1\\1
\end{bmatrix}$. If $N=0$, then $A\in\Avoid(m,3,p\cdot I_2)$ so $\abs{A}\leq\forb(m,3,p\cdot I_2)=\forb(m,3,p\cdot K_2)-g_p$. If $N\geq 1$, then $\abs{A}\leq\abs{B}+\abs{C}\leq2^m+m2^{m-1}+(p-1)\binom{m}{2}-N(p-(p-k))\leq2^m+m2^{m-1}+(p-1)\binom{m}{2}-k=\forb(m,3,p\cdot K_2)-k$.
\end{proof}

\begin{cor}\label{p=3}
If $\min\{b,c\}=3$, $\max\{a,d\}\leq 2$, $m\geq 4$ and $2^{m-2}\geq(\max\{b,c\}-1)m^2$, then \[\forb(m,3,F(a,b,c,d))=\forb(m,3,3\cdot I_2)=\forb(m,3,3\cdot K_2)-1.\]
\end{cor}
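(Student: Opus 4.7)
The plan is to read off the upper bound directly from Lemma \ref{minp} and pair it with the obvious monotonicity lower bound. For $p = 3$ we have $r = \lceil \log_2(p-1)\rceil = 1$, so the hypothesis $m \geq 2r+2$ of Lemma \ref{minp} specialises to $m \geq 4$, matching the corollary's assumption, while the inequality $2^{m-2} \geq (\max\{b,c\}-1)m^2$ carries over verbatim.

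The paper records $g_3 = 1$, and since $\max\{a,d\} \leq 2 = p-1$, I would take $k = 1$, which satisfies $1 \leq k \leq \min\{p, g_p\} = \min\{3,1\} = 1$. Lemma \ref{minp} then gives
\[
\forb(m,3,F(a,b,c,d)) \leq \forb(m,3, 3 \cdot I_2) + g_3 - 1 = \forb(m,3,3 \cdot I_2) = \forb(m,3,3 \cdot K_2) - 1,
\]
where the last equality is the definition of $g_3$ from Proposition \ref{prop:gpindep}.

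For the matching lower bound, I would observe that $3 \cdot I_2 = F(0,3,3,0)$ is a configuration of $F(a,b,c,d)$ whenever $b,c \geq 3$, which holds since $\min\{b,c\} = 3$. Hence $\Avoid(m,3,F(a,b,c,d)) \supseteq \Avoid(m,3,3\cdot I_2)$, which yields $\forb(m,3,F(a,b,c,d)) \geq \forb(m,3,3\cdot I_2)$. Combining the two bounds gives the claimed equality.

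There is no serious obstacle here: the heavy lifting was done in Lemma \ref{minp}, which in turn relies on Proposition \ref{prop:gpindep} together with the known value $g_3 = 1$. The only thing worth checking is the compatibility of hypotheses, which reduces to the short numerical verification $r = 1$ and $\min\{p, g_p\} = 1$ above.
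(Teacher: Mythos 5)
Your proof is correct and follows essentially the same route as the paper: the paper's proof of this corollary is literally ``Set $p=3, k=1$ in Lemma \ref{minp} and note that $g_3=1$,'' leaving the lower bound implicit, whereas you spell out both the hypothesis check ($r=1$, so $2r+2=4$) and the monotonicity lower bound $\forb(m,3,3\cdot I_2)\leq\forb(m,3,F(a,b,c,d))$ via $3\cdot I_2\prec F(a,b,c,d)$. Same argument, slightly more detail.
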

\begin{proof}
Set $p=3, k=1$ in Lemma \ref{minp} and note that $g_3=1$.
\end{proof}

\begin{prop}\label{p=4}
If $\min\{b,c\}=4$, $\max\{a,d\}\leq 3$, $m\geq 6$ and $2^{m-2}\geq(\max\{b,c\}-1)m^2$, then \[\forb(m,3,F(a,b,c,d))=\forb(m,3,4\cdot I_2)=\forb(m,3,4\cdot K_2)-2.\]
\end{prop}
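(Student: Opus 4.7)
The plan is to follow the strategy of Corollary~\ref{p=3}. After Lemma~\ref{reduction} reduces us to $b=c=4$, the easy lower bound $\forb(m,3,F(a,4,4,d))\geq\forb(m,3,4\cdot I_2)=\forb(m,3,4\cdot K_2)-2$ follows from $4\cdot I_2\prec F(a,4,4,d)$ together with Proposition~\ref{prop:gpindep} and the value $g_4=2$. For the upper bound, when $\max\{a,d\}\leq 2$ Lemma~\ref{minp} applied with $p=4$ and $k=2$ immediately gives $\forb(m,3,F(a,4,4,d))\leq\forb(m,3,4\cdot K_2)-2$. The real work is therefore the case $\max\{a,d\}=3$, which by swapping $0$'s and $1$'s we may assume to be $a=3$, $d\leq 3$.

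So take $A\in\Avoid(m,3,F(3,4,4,d))$ with $|A|>\forb(m,3,4\cdot K_2)-2$, form its standard decomposition $A=[B\mid C]$, and let $N$ be the number of non-edges of the associated graph $T$. If $N=0$ then after a row permutation $A\in\Avoid(m,3,4\cdot I_2)$, contradicting the assumed lower bound on $|A|$. If $N\geq 2$, then $|C|\leq(p-1)\binom{m}{2}-N(p-\max\{a,d\})=3\binom{m}{2}-N$ combined with $|B|\leq 2^m+m2^{m-1}$ already gives $|A|\leq\forb(m,3,4\cdot K_2)-2$, again a contradiction.

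The hard case is $N=1$. The size assumption forces $|B|>2^m+m2^{m-1}-2^{m-3}$, so Lemma~\ref{transitive} applies, and after rearranging rows the unique non-edge lies at some $(i^*,j^*)$ with $i^*<j^*$ and direction $\begin{bmatrix}0\\0\end{bmatrix}$ or $\begin{bmatrix}1\\1\end{bmatrix}$. I would decompose $B$ by the support of its columns into ``standard'' columns (with $1$'s above $0$'s in the support, avoiding the mark at $(i^*,j^*)$) and ``weird'' columns where $(v_{i^*},v_{j^*})=(0,1)$ and the remaining entries are forced by the edges: $v_k\in\{1,2\}$ for $k<i^*$, $v_l=2$ for $i^*<l<j^*$, and $v_l\in\{0,2\}$ for $l>j^*$. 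A direct count in the $\begin{bmatrix}0\\0\end{bmatrix}$ case gives
\[|B|\leq 2^m+m2^{m-1}-(i^*+1)2^{m-3}+2^{m-j^*+i^*-1},\]
and this right-hand side is less than $2^m+m2^{m-1}$ by at least $2^{m-3}\geq 2$ whenever $(i^*,j^*)\neq(1,2)$. For every non-exceptional position this saving on $|B|$ combined with $|C|\leq 3\binom{m}{2}-1$ already delivers $|A|\leq\forb(m,3,4\cdot K_2)-2$.

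The main obstacle is the exceptional position $(i^*,j^*)=(1,2)$ (and symmetrically $(m-1,m)$ for the $\begin{bmatrix}1\\1\end{bmatrix}$ direction), where the $|B|$ bound is sharp. I would handle it by tightening the $|C|$ bound through a rigidity analysis of single-mark columns: a column of $C$ whose only mark is at a pair $(i,j)$ is almost entirely determined by the edge constraints, and one can check that at each of $(1,m)$ and $(2,m)$ only $2$ single-mark columns exist (rather than the $3$ permitted by the mark bound), while at $(1,2)$ the mark bound is itself $2$. Since replacing any single-mark column by one with multiple marks only decreases $|C|$, the resulting IP bound is
\[|C|\leq 2+2+2+3\left(\binom{m}{2}-3\right)=3\binom{m}{2}-3,\]
giving $|A|\leq\forb(m,3,4\cdot K_2)-3$, the final contradiction. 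The delicate step I expect is cleanly justifying this ``limited availability'' count for single-mark columns at the three short pairs and verifying that the IP optimum is attained without resorting to multi-mark columns.
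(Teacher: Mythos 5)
Your overall skeleton mirrors the paper's: the lower bound from $4\cdot I_2\prec F(a,4,4,d)$ together with $g_4=2$, the standard decomposition $A=[B\mid C]$, and the case split $N=0$, $N\geq2$, $N=1$, with $N=1$ being the only hard case. Where the paper argues by contradiction (it first notes $|A|\leq \forb(m,3,4\cdot K_2)-1$ and shows equality forces $|B|=2^m+m2^{m-1}$ and $|C|=3\binom m2-1$ with every column of $C$ single-mark, and then observes that some scarce pair such as $(1,m)$ or $(1,m-1)$ admits at most $2$ single-mark columns rather than $3$), you attempt direct refined bounds on $|B|$ and $|C|$. The two routes are genuinely different in how they close the gap of one column.

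Your $|B|$ computation is in fact correct: once $T$ is transitive and the rows are put in a linear extension, a single non-edge must occupy adjacent positions $(i^*,i^*+1)$ (any intermediate row would violate transitivity), and one can check $|B|\leq 2^m+m2^{m-1}-(i^*-1)2^{m-3}$ for a $0$-non-edge at $(i^*,i^*+1)$, which indeed makes $(1,2)$, and by $0/1$-complementation $(m-1,m)$, the only positions where the $|B|$ bound alone does not already win. You do not explicitly note the adjacency constraint, but the formula is used in a way consistent with it.

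The gap is in your justification of $|C|\leq 3\binom m2-3$. The phrase ``replacing any single-mark column by one with multiple marks only decreases $|C|$'' does not constitute a proof, and in fact the natural relaxation gives only $|C|\leq\frac{M+S}{2}$ where $M=\sum_{ij}m_{ij}\leq 3\binom m2-1$ is the total number of marks and $S\leq 3\binom m2-3$ is the total number of single-mark columns (using the $\leq2$ single-mark budgets you computed at $(1,m)$, $(2,m)$ and the mark cap $\leq2$ at $(1,2)$); this yields only $|C|\leq 3\binom m2-2$. To reach $-3$ one must additionally rule out the borderline case $\mu=1$ with $S=3\binom m2-3$ and $M=3\binom m2-1$, and the way to do this is to observe that a single multi-mark column covering the excess at both $(1,m)$ and $(2,m)$ would have $v_1=v_2=0$ and $v_m=1$, hence an additional mark at the $0$-non-edge $(1,2)$, forcing $m_{12}\geq3$, contradicting the cap of $2$ there. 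With this extra observation your bound $|C|\leq 3\binom m2-3$ does hold; but note it is stronger than what is needed, since $|C|\leq 3\binom m2-2$ already suffices for $|A|\leq\forb(m,3,4\cdot K_2)-2$, so the simpler inequality $|C|\leq\frac{M+S}{2}$ could have closed the proof more cheaply. The paper avoids this entire subtlety by doing the equality analysis first, which forces all of $C$ to be single-mark before any counting is done.
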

\begin{proof}
The conditions ensure that Lemma \ref{reduction} applies so we can assume that $b=c=4$. We have that $4\cdot I_2\prec F(a,4,4,d)$ and $g_4=2$ by \cite{dillon2021exponential}, so $2^m+m2^{m-1}+3\binom{m}{2}-2=\forb(m,3,4\cdot I_2)\leq\forb(m,3,F(a,4,4,d))$. It suffices to show the reverse inequality. 

\indent Let $A\in\Avoid(m,3,F(a,4,4,d))$, and let $A=\begin{bmatrix}B\mid C\end{bmatrix}$ be a standard decomposition. If $N=0$, then $A\in\Avoid(m,3,4\cdot I_2)$ so $\abs{A}\leq\forb(m,3,4\cdot I_2)$. If $N\geq 2$, then $\abs{A}\leq\abs{B}+\abs{C}\leq(2^m+m2^{m-1})+(3\binom{m}{2}-N)\leq\forb(m,3,4\cdot I_2)$.

\indent If $N=1$, $\abs{A}\leq\abs{B}+\abs{C}\leq(2^m+m2^{m-1})+(3\binom{m}{2}-1)=\forb(m,3,4\cdot I_2)+1$. Assume for a contradiction that equality holds. Then $\abs{B}=2^m+m2^{m-1}$, so $T$ is transitive by Lemma \ref{transitive}. Let $i'<j'$ be the unique pair of rows associated to $\begin{bmatrix}0\\0\end{bmatrix}$ or $\begin{bmatrix}1\\1\end{bmatrix}$. Without loss of generality we can assume it is associated with $\begin{bmatrix}0\\0\end{bmatrix}$. We can rearrange the rows of $A$ so that there is no 0 above 1 in $B$ for every pair of rows except possibly for rows $i',j'$. For equality to hold we also need $\abs{C}=3(\binom{m}{2}-1)+2=3\binom{m}{2}-1$. This means that every column in $C$ has exactly one mark, and for every $i<j$ with $(i,j)\ne(i',j')$ there are 3 columns in $C$ with a mark at $(i,j)$, and there are 2 columns in $C$ with a mark at $(i',j')$. There are four cases.

\indent If $1<i'<j'<m$, then for a column $v$ to have exactly one mark at $(1,m)$, we need $v_k=2$ for all $2\leq k\leq m-1$, which leaves only 1 choice for such $v$, contradiction. If $i'=1,j'=m$, then for a column $v$ to have exactly one mark at $(1,m-1)$, we need $v_k=2$ for all $2\leq k\leq m-2$. So there are only 2 choices for such $v$, which contradicts the above. If $i'=1,j'<m$, then for a column $v$ to have exactly one mark at $(1,m)$, we need $v_k=2$ for all $2\leq k\leq m-1$ except that $v_{j'}$ could be either $1$ or $2$, which gives only 2 choice for such $v$, contradiction. If $i'>1,j'=m$, then for a column $v$ to have exactly one mark at $(1,m)$, we need $v_k=2$ for all $2\leq k\leq m-1$ except that $v_{i'}$ could be either $0$ or $2$, which gives only 2 choice for such $v$, contradiction. In all cases we have a contradiction so equality cannot hold and $\abs{A}\leq\forb(m,3,4\cdot I_2)$. Thus $\forb(m,3,F(a,4,4,d))\leq\forb(m,3,4\cdot I_2)$, and this completes the proof.
\end{proof}

Up to now, we have shown that for $p\leq4$, $\forb(m,3,F(a,b,c,d))=\forb(m,3,p\cdot I_2)=\forb(m,3,p\cdot K_2)-g_p$ as long as $\max\{a,d\}<p=\min\{b,c\}$ and Lemma \ref{reduction} applies. This need not be true for larger $p$ as will be seen in the next subsection.

\subsection{$F(a,p,p,d)$}\label{sec:p-k}
Throughout this section we assume that $a,d\leq p$ and $A\in\ext(m,3,F(a,p,p,d))$.
\subsubsection{Structure of $A\in\ext(m,3,F(a,p,p,d))$}
Let $A=\begin{bmatrix}B\mid C\end{bmatrix}$ be a standard decomposition of $A$ as defined in Definition \ref{def:edge-nonedge}. Note that $|A|\geq \forb(m,3,p\cdot I_2)=\forb(m,3,p\cdot K_2)-g_p=2^m+m2^{m-1}+(p-1)\binom{m}2-g_p$ and $|C|\leq(p-1)\binom{m}{2}$. Thus, for all $m\geq 2N_r+2$,  we have $|B|>2^m+m2^{m-1}-2^{m-3}$, so by Lemma \ref{transitive} we may assume that no pair $i<j$ of rows is associated with $\begin{bmatrix}
    1\\0
\end{bmatrix}$. 
\begin{lemma}\label{lemma:transitive1}
Let $1\leq i<j<k\leq m$.
\begin{itemize}
    \item if $j,k$ forms a 0-non-edge, then one of $i,j$ and $i,k$ also forms a 0-non-edge,
    \item if $i,j$ forms a 1-non-edge, then one of $i,k$ and $j,k$ also forms a 1-non-edge.
\end{itemize} 
\end{lemma}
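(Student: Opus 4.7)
The approach mirrors the counting argument in the proof of Lemma~\ref{transitive}. From the standing assumption that $A$ is extremal in $\Avoid(m,3,F(a,p,p,d))$, together with $|C|\le(p-1)\binom{m}{2}$ and the hypothesis $m\ge 2N_r+2$, we already have $|B|>2^m+m2^{m-1}-2^{m-3}$. I would prove the first bullet by contradiction and obtain the second bullet by the symmetry of swapping $0\leftrightarrow 1$ and reversing the row order, which sends $F(a,p,p,d)$ to $F(d,p,p,a)$, interchanges 0-non-edges with 1-non-edges, preserves the ``no $\begin{bmatrix}1\\0\end{bmatrix}$-association'' convention, and turns a triple $(i,j,k)$ witnessing a failure of the first bullet into a triple witnessing a failure of the second bullet.

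Suppose then, for contradiction, that there exist $i<j<k$ with $(j,k)$ a 0-non-edge but neither $(i,j)$ nor $(i,k)$ a 0-non-edge. Since $\begin{bmatrix}1\\0\end{bmatrix}$-associations have been eliminated, each of $(i,j),(i,k)$ is associated with either $\begin{bmatrix}0\\1\end{bmatrix}$ (an edge) or $\begin{bmatrix}1\\1\end{bmatrix}$ (a 1-non-edge), producing four sub-cases. In each sub-case I would enumerate the $(0,1)$-patterns on rows $\{i,j,k\}$ that simultaneously avoid the associated vector at each of the three pairs. A direct check shows that in every sub-case one of two things happens: either the allowed patterns all share a common value in some row of $\{i,j,k\}$ (for example, with both $(i,j)$ and $(i,k)$ edges, the only allowed patterns are $101,110,111$, all having $v_i=1$), or two of the three rows are determined by the third (for example, with $(i,j)$ a 1-non-edge and $(i,k)$ an edge, the only allowed patterns are $010$ and $101$).

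Consequently, for each support $X \supseteq \{i,j,k\}$, deleting one or two rows of $\{i,j,k\}$ leaves $B(X)$ simple, and the resulting $(0,1)$-matrix is $K_2$-free by the same argument as in Lemma~\ref{transitive}, giving $|B(X)| \le |X|$. Summing as in the proof of Lemma~\ref{transitive},
\[
|B| \le \sum_{X\not\supseteq\{i,j,k\}}(|X|+1) + \sum_{X\supseteq\{i,j,k\}} |X| = 2^m + m2^{m-1} - 2^{m-3},
\]
contradicting the standing lower bound $|B|>2^m+m2^{m-1}-2^{m-3}$. The only real work is the sub-case enumeration; each case involves at most three $(0,1)$-patterns on $\{i,j,k\}$, so verifying the claimed constancy or determinacy is immediate and I do not expect any genuine obstacle.
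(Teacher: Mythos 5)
Your proposal is correct and mirrors the paper's own argument: the same four-subcase enumeration of what $(i,j)$ and $(i,k)$ can be assigned, noting in each case that $B(X)|_{X\setminus\{i\}}$ remains simple, followed by the identical summation giving $|B|\le 2^m+m2^{m-1}-2^{m-3}$. The only cosmetic difference is that you derive the second bullet via the $0\leftrightarrow 1$ / row-reversal symmetry, whereas the paper simply states that case is ``similar.''
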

\begin{proof}
The proof is similar to the one of Lemma~\ref{transitive}. First assume that $i<j<k$ and $j,k$ forms a 0-non-edge, and neither $i,j$ nor $i,k$ form a 0-non-edge. For any subset $X\subset[m]$ of rows, let $B(X)$ be the matrix consisting of columns of $B$ with support $X$. Note that $B(X)|_X$ is simple and it is in $\Avoid(|X|,2,K_2)$, hence $|B(X)|_X|\le |X|+1$ if $\{i,j,k\}\not\subseteq X$. For $\{i,j,k\}\subseteq X$ we show below that $|B(X)|_X|\le |X|$, so then a similar calculation as in Lemma~\ref{transitive} shows $|B|\leq 2^m+m2^{m-1}-2^{m-3}$, a contradiction. 
\begin{description}
      \item[(i)] Pair $(i,j)$ is assigned $\begin{bmatrix}
          0\\1
      \end{bmatrix}$.
      \begin{description}
          \item[(ia)] Pair $(i,k)$ is assigned $\begin{bmatrix}
          0\\1
      \end{bmatrix}$. In this case entries in row $i$ of $B(X)$ must all be 1, since $i=0$ would imply both $j=0$ and $k=0$, which contradicts the assumption that $j,k$ forms a 0-non-edge. Therefore, $B(X)|_{X\setminus\{i\}}$ is simple.
      \item[(ib)] Pair $(i,k)$ is assigned $\begin{bmatrix}
          1\\1
      \end{bmatrix}$.  In this case the only  possible columns of $B(X)|_{\{i,j,k\}}$ are $\begin{array}{c|cc}
               i& 0&1\\j&0&1\\k&1&0
      \end{array}$, hence $B(X)|_{X\setminus\{i\}}$ is simple.
      \end{description}
      \item[(ii)]  Pair $(i,j)$ is assigned $\begin{bmatrix}
          1\\1
      \end{bmatrix}$.
      \begin{description}
      \item[(iia)]  Pair $(i,k)$ is assigned $\begin{bmatrix}
          0\\1
      \end{bmatrix}$. Now, the only  possible columns of $B(X)|_{\{i,j,k\}}$ are $\begin{array}{c|cc}
               i& 0&1\\j&1&0\\k&0&1
      \end{array}$, hence $B(X)|_{X\setminus\{i\}}$ is simple.
      \item[(iib)] Pair $(i,k)$ is assigned $\begin{bmatrix}
          1\\1
      \end{bmatrix}$. Here, row $i$ of $B(X)$ cannot contain any entry $1$, hence $B(X)|_{X\setminus\{i\}}$ is simple.
      \end{description}
\end{description}
In all cases above, it follows that $|B(X)|=|B(X)|_{X\setminus\{i\}}|\leq\forb(|X|-1,2,K_2)=|X|$, as required.

The case when $i<j<k$ and $i,j$ forms a 1-non-edge is similar. 
\end{proof}

\begin{lemma}\label{lemma:transitive2}
If $i<j<k$ and $i,k$ forms a non-edge, then one of $i,j$ and $j,k$ also forms a non-edge.
\end{lemma}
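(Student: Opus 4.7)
The plan is to mimic the arguments of Lemma~\ref{transitive} and Lemma~\ref{lemma:transitive1}. As in those proofs, we work with the standard decomposition $A=[B\mid C]$; the hypotheses of the surrounding discussion ($m\geq 2N_r+2$ together with the lower bound $\abs{A}\geq\forb(m,3,p\cdot I_2)$) guarantee that $\abs{B}>2^m+m2^{m-1}-2^{m-3}$. After the rearrangement justified by Lemma~\ref{transitive}, no pair is associated with $\begin{bmatrix}1\\0\end{bmatrix}$, so every pair is either an edge (assigned $\begin{bmatrix}0\\1\end{bmatrix}$) or a non-edge of type $\begin{bmatrix}0\\0\end{bmatrix}$ or $\begin{bmatrix}1\\1\end{bmatrix}$.

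I would argue by contradiction, supposing that $(i,k)$ is a non-edge while both $(i,j)$ and $(j,k)$ are edges. For each $X\subseteq[m]$ with $\{i,j,k\}\subseteq X$, I analyze $B(X)|_{\{i,j,k\}}$ under the three mark-avoidance constraints: $(v_i,v_j)\neq(0,1)$, $(v_j,v_k)\neq(0,1)$, and either $(v_i,v_k)\neq(0,0)$ (if $(i,k)$ is a 0-non-edge) or $(v_i,v_k)\neq(1,1)$ (if $(i,k)$ is a 1-non-edge). A direct enumeration of the eight $\{0,1\}$-columns on rows $\{i,j,k\}$ shows that in the 0-non-edge case only $(1,0,0)^T$, $(1,1,0)^T$, $(1,1,1)^T$ survive, all sharing $v_i=1$; in the 1-non-edge case only $(0,0,0)^T$, $(1,0,0)^T$, $(1,1,0)^T$ survive, all sharing $v_k=0$. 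Either way, one of the rows $i,k$ is constant on $B(X)$, so deleting that row yields a simple matrix in $\Avoid(\abs{X}-1,2,K_2)$, whence $\abs{B(X)}\leq\abs{X}$ by Theorem~\ref{thm:sauer}.

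Combining this refined estimate with the usual bound $\abs{B(X)}\leq\abs{X}+1$ for the remaining subsets $X$ of $[m]$, the familiar sum gives
\[
\abs{B}\leq\sum_{X\subseteq[m]}(\abs{X}+1)-\sum_{\substack{X\subseteq[m]\\\{i,j,k\}\subseteq X}}1=2^m+m2^{m-1}-2^{m-3},
\]
contradicting the assumed lower bound on $\abs{B}$. I do not anticipate any substantive obstacle: the argument is a finite case-check on the $\{0,1\}^3$-cube, in the same spirit as (though not identical to) the cases already handled in Lemma~\ref{lemma:transitive1}. The only minor point requiring care is that, after the rearrangement, "edge" unambiguously means assignment $\begin{bmatrix}0\\1\end{bmatrix}$, so the mark-avoidance conditions at $(i,j)$ and $(j,k)$ both take the specific form $(v_x,v_y)\neq(0,1)$ used above.
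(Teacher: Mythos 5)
Your proof is correct, but it re-derives a fact that the paper extracts directly from Lemma~\ref{transitive}. The paper's proof is a one-liner: after the rearrangement, if neither $(i,j)$ nor $(j,k)$ is a non-edge, then $ij$ and $jk$ are both directed edges of $T$, so by the transitivity established in Lemma~\ref{transitive} the pair $ik$ is also a directed edge, which contradicts $(i,k)$ being a non-edge. Your argument instead re-runs the $B(X)$-counting machinery: your case analysis on the $\{0,1\}^3$-cube is accurate (in the $0$-non-edge case only $(1,0,0)^T,(1,1,0)^T,(1,1,1)^T$ survive, all with $v_i=1$; in the $1$-non-edge case only $(0,0,0)^T,(1,0,0)^T,(1,1,0)^T$ survive, all with $v_k=0$), and the resulting bound $\abs{B}\le 2^m+m2^{m-1}-2^{m-3}$ does contradict the available lower bound on $\abs{B}$. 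But notice that this is precisely the sub-case ``$ij,jk$ are edges of $T$ but neither $ik$ nor $ki$ is'' that is already settled inside the proof of Lemma~\ref{transitive}; the conclusion ``$T$ is transitive'' in that lemma means this configuration cannot occur, so no further counting is needed. The observation worth internalizing is that once you invoke Lemma~\ref{transitive} to justify the rearrangement, you should also use its full strength rather than rebuilding the estimate from scratch; the longer route is sound, just redundant.
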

\begin{proof}
If $ij$ and $jk$ are both directed edges, then $ik$ would also be a directed edge by the transitivity provided by Lemma~\ref{transitive}, a contradiction.
\end{proof}
\begin{lemma}\label{lemma:nonedgelocation}
If there are $N$ non-edges, then all the 0-non-edges are in the first $N+1$ rows, and all the 1-non-edges are in the last $N+1$ rows.  
\end{lemma}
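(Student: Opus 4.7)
The plan is a direct counting argument using both earlier transitivity lemmas. I will show that any 0-non-edge $(j_0,k_0)$ with $j_0<k_0$ forces at least $k_0-1$ distinct non-edges; since there are only $N$ non-edges in total, this gives $k_0\leq N+1$, and taking $(j_0,k_0)$ with $k_0$ maximal then shows that every 0-non-edge lives within the first $N+1$ rows. The 1-non-edge claim will follow by an entirely symmetric construction applied to a 1-non-edge $(i_0,j_0)$ with $i_0$ minimal.

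To produce the non-edges associated with a 0-non-edge $(j_0,k_0)$, I will assemble two disjoint batches. First, for each $i\in\{1,\dots,j_0-1\}$, the first bullet of Lemma~\ref{lemma:transitive1} applied to the triple $i<j_0<k_0$ (where $(j_0,k_0)$ is a 0-non-edge) forces one of $(i,j_0)$ and $(i,k_0)$ to be a 0-non-edge; call this non-edge $e_i$. Different values of $i$ give different smaller endpoints, so the $e_i$'s are pairwise distinct, contributing $j_0-1$ non-edges. Second, for each $j\in\{j_0+1,\dots,k_0-1\}$, Lemma~\ref{lemma:transitive2} applied to $j_0<j<k_0$ forces one of $(j_0,j)$ and $(j,k_0)$ to be a non-edge; call this $f_j$. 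Each $f_j$ contains row $j$ with $j_0<j<k_0$, so the $f_j$'s are mutually distinct and disjoint from both $(j_0,k_0)$ and the $e_i$'s (whose third endpoint, apart from $j_0$ or $k_0$, is always $<j_0$). Summing gives $(j_0-1)+(k_0-j_0-1)+1=k_0-1$ distinct non-edges, as needed.

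For the 1-non-edges, pick a 1-non-edge $(i_0,j_0)$ with $i_0$ minimal and carry out the mirror construction: apply the second bullet of Lemma~\ref{lemma:transitive1} to each triple $i_0<j_0<k$ with $k>j_0$ to obtain $m-j_0$ distinct 1-non-edges, each carrying a row index $k>j_0$, and apply Lemma~\ref{lemma:transitive2} to each triple $i_0<i<j_0$ to obtain $j_0-i_0-1$ further distinct non-edges, each carrying an index $i$ strictly between $i_0$ and $j_0$. Together with $(i_0,j_0)$ itself this produces at least $m-i_0$ distinct non-edges, forcing $i_0\geq m-N$. The only delicate point in either direction is verifying the disjointness of the non-edges produced, which in each case reduces to the observation that each non-edge has a distinguishing ``auxiliary'' endpoint lying in a different range of row indices.
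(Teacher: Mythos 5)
Your proof is correct and uses essentially the same strategy as the paper: using Lemmas \ref{lemma:transitive1} and \ref{lemma:transitive2} to exhibit at least $k_0-1$ distinct non-edges whenever $(j_0,k_0)$ is a 0-non-edge (and the symmetric count for 1-non-edges). The only cosmetic difference is that the paper applies Lemma \ref{lemma:transitive1} once to row $1$ and then branches into two cases, whereas you invoke it uniformly for each $i<j_0$, which avoids the case split.
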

\begin{proof}
Let $k$ be the largest number such that there exists $j<k$ with $j,k$ forming a 0-non-edge. Then either $1,j$ or $1,k$ is also a 0-non-edge by Lemma~\ref{lemma:transitive1}. If $1,k$ is a 0-non-edge, then by  Lemma~\ref{lemma:transitive2}, for every $1<\ell<k$, either $1,\ell$ or $\ell,k$ is a non-edge. This means that the total number of non-edges is at least $k-1$. If instead $1,j$ is a 0-non-edge, then similarly for $1<i<j$, one of $1,i$ and $i,j$ is a non-edge, and for every $j<\ell<k$, one of $j,\ell$ and $\ell,k$ is a non-edge. Together with the non-edges $1,j$ and $j,k$ we get at least $(j-1)+(k-j)=k-1$ non-edges. In both cases, we conclude that $N\geq k-1$, so $k\leq N+1$. The 1-non-edges case is similar.
\end{proof}
\subsubsection{Lower bounds}
Similar to Section \ref{sec:gp}, we say a pair $(i,j)$ of rows with $i<j$ is \textit{abundant} if $2^{i-1+m-j}\geq p-1$, and \textit{scarce} otherwise. Equivalently, if $r=\ceil{\log_2(p-1)}$, then $(i,j)$ is abundant if $i-1+m-j\geq r$ and scarce otherwise. 

\begin{lemma}\label{lemma:correspondence}
Suppose $m\geq\max\{2N+2,2r+2\}$ and there are $N$ non-edges. Then 
\begin{itemize}
    \item for every scare pair $(i,m+1-j)$, if $a_i$ is the number of 0-non-edges $(i,k)$ with $i<k$, and $b_j$ is the number of 1-non-edges $(k,m+1-j)$ with $k<m+1-j$, then the number of columns with their unique marks at $(i,m+1-j)$ is $2^{i+j-2+a_i+b_j}$. 
    \item for every abundant pair $(i,j)$ that is not a non-edge, there are at least $p-1$ 1-mark columns, each of whose unique mark is at $(i,j)$.
    \item for every non-edge $(i,j)$, there are at least $p-1$ 1-mark columns, each of whose unique mark is at $(i,j)$.
\end{itemize}
\end{lemma}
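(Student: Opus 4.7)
The plan is to prove all three bullets by directly tracing the constraints on a column $v$ whose only mark sits at the specified pair. Once the two anchor rows of the mark are fixed by the mark vector, each remaining entry $v_\ell$ must be chosen to avoid generating any additional mark. The main structural input is Lemma~\ref{lemma:nonedgelocation}: together with $m \geq 2N+2$, it forces the 0-non-edges into rows $[1,N+1]$ and the 1-non-edges into rows $[m-N,m]$, with these two ranges disjoint. In particular, every pair touching a row strictly between $N+1$ and $m-N$ is an edge.

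For the first bullet, fix a scarce edge pair $(i,m+1-j)$, so that $v_i = 0$ and $v_{m+1-j} = 1$. I would partition the remaining rows into three groups. For $\ell < i$: because $i \leq r$ and $m+1-j \geq m-r+1$, the inequality $m \geq \max\{2N+2,2r+2\}$ rules out $(\ell,m+1-j)$ being either kind of non-edge, so it is an edge, forcing $v_\ell \neq 0$ and giving $v_\ell \in \{1,2\}$ with factor $2^{i-1}$. Symmetrically, for $\ell > m+1-j$ the pair $(i,\ell)$ is an edge forcing $v_\ell \neq 1$, contributing $2^{j-1}$. For $i < \ell < m+1-j$, by default both $(i,\ell)$ and $(\ell,m+1-j)$ are edges so $v_\ell = 2$; each 0-non-edge $(i,\ell)$ relaxes the first constraint and gives $v_\ell \in \{1,2\}$, and each 1-non-edge $(\ell,m+1-j)$ relaxes the second and gives $v_\ell \in \{0,2\}$, each such relaxation contributing an extra factor of $2$. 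The disjointness of the non-edge row ranges ensures the rows counted by $a_i$ and $b_j$ are disjoint, so multiplying yields $2^{i+j-2+a_i+b_j}$.

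For the second and third bullets, an analogous row-by-row computation produces a lower bound. For an abundant edge pair $(i,j)$, the baseline count (with all non-edge contributions suppressed) is $2^{(i-1)+(m-j)} \geq 2^r \geq p-1$ directly from the abundance condition, and any non-edge can only add extra factors of $2$. For a non-edge $(i,j)$, the anchor values $v_i = v_j$ (both $0$ for a 0-non-edge, both $1$ for a 1-non-edge) impose fewer constraints than an edge anchoring would: for a 0-non-edge I would take $v_\ell \in \{1,2\}$ for $\ell < i$, $v_\ell \in \{0,2\}$ for $i < \ell < j$, and $v_\ell \in \{0,2\}$ for $\ell > j$, producing $2^{m-2}$ admissible columns, which already exceeds $p-1$ since $m \geq 2r+2$ gives $2^{m-2} \geq 2^{2r} \geq (p-1)^2$; the 1-non-edge case is symmetric.

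The main obstacle is the independence verification underlying the factorization: one must rule out that some pair $(\ell,\ell')$ with both endpoints distinct from the anchor rows creates an additional mark when $v_\ell, v_{\ell'}$ range over their locally allowed sets. This demands a case analysis over pair types (edge, 0-non-edge, 1-non-edge) and over the relative positions of $\ell, \ell'$ among the three row groups. The key simplification is again the disjointness from the first step: the forced value $v_\ell = 2$ on middle rows not involved in a relaxing non-edge, together with $v_\ell \neq 0$ for $\ell < i$ and $v_\ell \neq 1$ for $\ell > m+1-j$, systematically prevents the pair $(v_\ell, v_{\ell'})$ from matching any mark vector on these non-anchor pairs.
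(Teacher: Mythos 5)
Your treatment of the first bullet is essentially the paper's argument and is correct: for a scarce pair $(i,m+1-j)$ one has $i\leq r$ and $m+1-j\geq m-r+1$, so all the free rows live inside $[1,r]\cup[m-r+1,m]$, which together with $m\geq N+r+2$ (a consequence of $m\geq\max\{2N+2,2r+2\}$) puts them outside the ranges $[m-N,m]$ and $[1,N+1]$ that could host a $1$-non-edge or a $0$-non-edge respectively; the independence check you flag at the end does go through there.

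The second and third bullets, however, have a genuine gap. You claim the number of $1$-mark columns at an abundant pair $(i,j)$ is at least $2^{(i-1)+(m-j)}$ and that ``any non-edge can only add extra factors of $2$,'' but non-edges \emph{not} incident to the anchor rows $i,j$ can \emph{remove} columns from this list, not add them. For instance, an abundant pair $(i,j)$ that is not a non-edge can have both $i,j\geq m-N$; if $(\ell,\ell')$ with $\ell<\ell'<i$ is a $1$-non-edge, then every column with $v_\ell=v_{\ell'}=1$ acquires an extra mark and is not a $1$-mark column, so the true count drops strictly below $2^{(i-1)+(m-j)}$. The same failure occurs for your third bullet: two further $0$-non-edges interior to your free region (e.g.\ $(\ell,\ell')$ with $i<\ell<\ell'<j\leq N+1$, where you allow $v_\ell,v_{\ell'}\in\{0,2\}$) likewise kill some of the $2^{m-2}$ columns. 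The paper sidesteps this entirely by taking a far smaller free region: for an abundant pair it chooses $r_1\leq i-1$, $r_2\leq m-j$ with $r_1+r_2=r$ and frees only $z\leq r_1$ and $z\geq m-r_2+1$, while for a $0$-non-edge it frees only $z\geq m-r+1$. Since $r<m-N$ and $m-r+1>N+1$, those $r$ free rows can never host a $1$-non-edge (at the top) or a $0$-non-edge (at the bottom), and one still harvests $2^r\geq p-1$ columns. To repair your argument you must restrict the free region in the same way; as written, the inequalities $2^{(i-1)+(m-j)}\geq p-1$ and $2^{m-2}\geq p-1$ are not lower bounds on the number of valid $1$-mark columns.
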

\begin{proof}
Since $m\geq 2N+2$, the first and last $N+1$ rows of $A$ are disjoint. By Lemma \ref{lemma:nonedgelocation}, all 0-non-edges are within the first $N+1$ rows, and all 1-non-edges are within the last $N+1$ rows. 

If $(i,m+1-j)$ is a scarce pair, then $i-1+m-(m+1-j)\leq r-1$, so $(m+1-j)-i\geq m-r\geq\frac12m\geq N+1$. In particular, $(i,m+1-j)$ is not a non-edge. Let $i<k_1<k_2<\ldots <k_{a_i}$ be such that $(i,k_t)$ is a 0-non-edge for all $t\in[a_i]$, and let $\ell_1<\ell_2<\ldots <\ell_{b_j}<m+1-j$ be such that $(\ell_s,m+1-j)$ is a 1-non-edge for all $s\in[b_j]$. By Lemma \ref{lemma:nonedgelocation}, $k_{a_i}\leq N+1<m-N\leq\ell_1$. If $v$ is a 1-mark column with its unique mark at $(i,m+1-j)$, then we must have $v_i=0, v_{m+1-j}=1$ and $v_z=2$ for all $z\in\{i+1,i+2,\ldots ,m+1-j\}\setminus \{k_1,k_2,\ldots ,k_{a_i},\ell_1,\ell_2,\ldots ,\ell_{b_j}\}$, while $v_z$ can be 1 or 2 for all $z<i$ and all $z\in\{k_1,k_2,\ldots ,k_{a_i}\}$, and $v_z$ can be 0 or 2 for all $z>m+1-j$ and all $z\in\{\ell_1,\ell_2,\ldots ,\ell_{b_j}\}$. Moreover, every column $v$ of this form does contain exactly one mark at $(i,m+1-j)$, so there are $2^{i+j-2+a_i+b_j}$ 1-mark columns with their unique mark at $(i,m+1-j)$. 

If $(i,j)$ is abundant and not a non-edge, then $i-1+m-j\geq r$. It follows that we can find $0\leq r_1\leq i-1$ and $0\leq r_2\leq m-j$, such that $r_1+r_2=r$. Consider the columns $v$ of the form $v_i=0, v_j=1$, $v_z=1$ or 2 for all $z\leq r_1$, $v_z=0$ or 2 for all $z\geq m-r_2+1$, and $v_z=2$ for all other $z$. There are $2^{r_1+r_2}=2^r\geq p-1$ such columns, and one can verify that each of these contains a unique mark at $(i,j)$.

If $(i,j)$ is a non-edge, without loss of generality a 0-non-edge, consider the columns $v$ of the form $v_i=v_j=0$, $v_z=0$ or 2 for all $z\geq m-r+1$, and $v_z=2$ for all other $z$. There are $2^r\geq p-1$ such columns, and one can verify that each of these contains a unique mark at $(i,j)$.
\end{proof}

\begin{lemma}\label{lemma:0mark}
If there exist $r_1,r_2\geq0$ with $r_1+r_2\leq m-2$, such that the 0-non-edges are exactly $(i,j)$ with $1\leq i<j\leq r_1+1$ and the 1-non-edges are exactly $(i,j)$ with $m-r_2\leq i<j\leq m$, then there are exactly $2^m+m2^{m-1}$ columns with no mark. 
\end{lemma}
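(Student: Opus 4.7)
The plan is to establish $|B|=2^m+m2^{m-1}$ in two steps: (i) enumerate the $m$-rowed $\{0,1,2\}$-columns satisfying the no-mark constraints imposed by the given non-edge structure, and (ii) use extremality of $A$ to show every such column must appear in $A$.

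First, I would partition the rows into the prefix $U=\{1,\ldots,r_1+1\}$, the suffix $D=\{m-r_2,\ldots,m\}$, and the middle $M=[m]\setminus(U\cup D)$; these three sets are disjoint since $r_1+r_2\le m-2$. A mark-free column $v$ must satisfy: (a) at most one $0$-entry on $U$ (no 0-non-edge becomes marked), (b) at most one $1$-entry on $D$ (no 1-non-edge becomes marked), and (c) no $0$ above $1$ on any edge pair. Writing $\alpha=|\{i\in U:v_i=0\}|$ and $\beta=|\{j\in D:v_j=1\}|$, conditions (a) and (b) give $\alpha,\beta\in\{0,1\}$, and the case $(\alpha,\beta)=(1,1)$ is ruled out because the $0$ in $U$ and the $1$ in $D$ form an edge pair (as $U$ precedes $D$ and the pair lies in neither $U^2$ nor $D^2$) that violates (c).

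I would then count each remaining case. In case $(0,0)$ one has $v|_U\in\{1,2\}^{r_1+1}$ and $v|_D\in\{0,2\}^{r_2+1}$, so (c) reduces to no $0$ above $1$ on $M$; using $\forb(|M|,3,K_2)=2^{|M|}+|M|2^{|M|-1}$, this yields $2^{r_1+1}\bigl(2^{|M|}+|M|2^{|M|-1}\bigr)2^{r_2+1}$ columns. In case $(1,0)$, fixing the unique $0$ at one of $r_1+1$ positions $i^*\in U$ forces $v|_M\in\{0,2\}^{|M|}$ (a $1$ on $M$ together with the $0$ at $i^*$ would violate (c) on an edge pair), producing $(r_1+1)\cdot 2^{r_1}\cdot 2^{|M|}\cdot 2^{r_2+1}$ columns. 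Case $(0,1)$ is symmetric, with $2^{r_1+1}\cdot 2^{|M|}\cdot(r_2+1)\cdot 2^{r_2}$ columns. Summing the three contributions and using $m=(r_1+1)+(r_2+1)+|M|$, the total collapses to $2^m+|M|2^{m-1}+(r_1+1)2^{m-1}+(r_2+1)2^{m-1}=2^m+m2^{m-1}$.

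For the extremality step, let $v$ be any such valid no-mark column. For every row pair $(i,j)$ the coordinates $(v_i,v_j)$ differ from the associated vector, so appending $v$ to $A$ leaves the count of the marked combination at $(i,j)$ unchanged, and in particular still below the threshold ($p-1$, $a-1$, or $d-1$) required to realise $F(a,p,p,d)$ at that pair. Therefore $A\cup\{v\}\in\Avoid(m,3,F(a,p,p,d))$, and extremality of $A$ forces $v\in A$; hence $B$ contains every valid no-mark column, giving $|B|=2^m+m2^{m-1}$. I expect the main obstacle is to verify cleanly that the edge constraints across $U$, $M$, and $D$ decouple in each case; in particular, case $(1,0)$ requires checking that fixing the $0$ at $i^*$ is the only additional constraint coupling $U$ to $M$, so that the three blocks contribute independent binary choices and the product formula is justified.
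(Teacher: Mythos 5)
Your Part (i) is correct and establishes the lemma, though by a different decomposition than the paper's. Writing $U=[r_1+1]$, $M$, $D=\{m-r_2,\ldots,m\}$ and classifying no-mark columns by $(\alpha,\beta)\in\{(0,0),(1,0),(0,1)\}$, where $\alpha$ and $\beta$ count the $0$'s in $U$ and $1$'s in $D$, you reduce each case to independent choices over the three blocks and obtain $2^m+|M|2^{m-1}+(r_1+1)2^{m-1}+(r_2+1)2^{m-1}=2^m+m2^{m-1}$. The paper instead partitions by the position of the first deviation from the canonical form $v_z\in\{1,2\}$ for $z\leq m-r_2-1$, $v_z\in\{0,2\}$ for $z\geq m-r_2$, getting $2^m$ plus $m$ groups of $2^{m-1}$. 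Both are valid; your version makes the block independence explicit, while the paper's is a single linear sweep. Two caveats. First, in case $(0,0)$ you invoke $\forb(|M|,3,K_2)$ to count the $v|_M$ with no $0$ above $1$; strictly speaking $\forb$ is a maximum, so this alone only gives an upper bound--you should count directly (breakpoint at the last $1$: $2^{|M|}+|M|2^{|M|-1}$). The identity with $\forb(|M|,3,K_2)$ is a happy coincidence, not the justification. Second, your Part (ii) (the extremality step) addresses a statement the lemma does not make: the lemma is a pure counting fact about all $\{0,1,2\}$-columns under a prescribed labelling of the row pairs, and it is used in the proof of Proposition~\ref{prop:lowerbound} precisely to count columns available to a \emph{construction}, not to bound $|B|$ inside a fixed extremal $A$. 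Your extremality argument is sound as a side remark, but it is extraneous here, and interpreting the lemma through an ambient extremal $A$ is a misreading of what needs to be proved.
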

\begin{proof}
If $v_z=1$ or 2 for all $z\leq m-r_2-1$ and $v_z=0$ or 2 for all $z\geq m-r_2$, then $v$ contains no mark and there are $2^m$ such columns. 

If $v$ is not of this form and has no mark, then $v_z=0$ for some $z\leq m-r_2-1$, or $v_z=1$ for some $z\geq m-r_2$. In the first case, let $i\leq m-r_2-1$ be minimal such that $v_i=0$. If $i\leq r_1+1$, then $v_z$ must be 1 or 2 for all other $z\in[r_1+1]$ and $v_z$ must be 0 or 2 for all $z>r_1+1$, and all of these $2^{m-1}$ columns have no mark. If $r_1+1<i\leq m-r_2-1$, then $v_z$ must be 1 or 2 for all other $z<i$ and $v_z$ must be 0 or 2 for all $z>i$, and all of these $2^{m-1}$ columns have no mark. If we are not in the first case, then $v_z=1$ or 2 for all $z\leq m-r_2-1$. Let $i\geq m-r_2$ be minimal such that $v_i=1$. Then $v_z$ must be 0 or 2 for all other $z\geq m-r_2$, and all of these $2^{m-1}$ columns have no mark. In total, there are $m2^{m-1}$ such columns with no mark. 
\end{proof}

Recall that $N_r=\binom{\floor{\frac{r}{2}}+1}{2}+\binom{\ceil{\frac{r}{2}}+1}{2}$. We say that a scarce pair $(i,m+1-j)$ is \emph{fixed}, if $2^{i+j-2+a_i+b_j}\ge p-1$, or equivalently $i+j-2+a_i+b_j\geq r$, using the notation of Lemma~\ref{lemma:correspondence}.

\begin{proof}[Proof of Proposition \ref{prop:lowerbound}]
Since $\forb(m,3,F(p-q_0,p,p,p-q_1))=\forb(m,3,F(p-q_1,p,p,p-q_0))$ by taking 0,1-complements, we may without loss of generality assume $q_0\leq q_1$. Consider $\binom{r_1+1}2$ 0-non-edges, which are $(i,j)$ for all $1\leq i<j\leq r_1+1$, and $\binom{r_2+1}2$ 1-non-edges, which are $(i,j)$ for all $m-r_2\leq i<j\leq m$. Using $r_1+r_2=r$, one can check that all $\binom{r+1}{2}$ scarce pairs are fixed. By Lemma \ref{lemma:0mark}, there are $2^m+m2^{m-1}$ columns with no mark. Since all scarce pairs are fixed, for each pair $(i,j)$ that is not a non-edge, by Lemma \ref{lemma:correspondence}, we can find $p-1$ columns, each of whose unique mark is at $(i,j)$. Also by Lemma \ref{lemma:correspondence}, for each 0-non-edge $(i,j)$, we can find $p-q_0-1$ columns with their unique marks at $(i,j)$, and for each 1-non-edge $(i,j)$, we can find $p-q_1-1$ columns with their unique marks at $(i,j)$. The matrix formed by all these distinct columns avoids $F(p-q_0,p,p,p-q_1)$ as a configuration, so
\begin{align*}
\forb(m,3,F(p-q_0,p,p,p-q_1))&\geq2^m+m2^{m-1}+(p-1)\binom m2-q_0\binom{r_1+1}2-q_1\binom{r_2+1}2\\
&=\forb(m,3,p\cdot K_2)-q_0\binom{r_1+1}2-q_1\binom{r_2+1}2,
\end{align*}
as required. 
\end{proof}

\begin{cor}\label{cor:lowerbound}
For every $0\leq q\leq p-1$ and every $m\geq 2N_r+2$,
\[\forb(m,3,F(p-q,p,p,p-q))\geq\forb(m,3,p\cdot K_2)-qN_r.\]
\end{cor}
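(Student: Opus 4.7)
The plan is to obtain this corollary as a direct specialization of Proposition~\ref{prop:lowerbound}. Concretely, I would set $q_0 = q_1 = q$ and choose $r_1, r_2$ to be the most balanced split of $r$, namely $r_1 = \lceil r/2 \rceil$ and $r_2 = \lfloor r/2 \rfloor$. This choice satisfies $r_1 + r_2 = r$, and with it
\[
\binom{r_1+1}{2} + \binom{r_2+1}{2} = \binom{\lceil r/2\rceil + 1}{2} + \binom{\lfloor r/2\rfloor + 1}{2} = N_r,
\]
so the hypothesis $m \geq 2\binom{r_1+1}{2} + 2\binom{r_2+1}{2} + 2$ of Proposition~\ref{prop:lowerbound} is exactly the given condition $m \geq 2N_r + 2$.

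Next, I would note that when $q_0 = q_1 = q$, the quantities $\min\{q_0,q_1\}$ and $\max\{q_0,q_1\}$ both equal $q$, so the lower bound in Proposition~\ref{prop:lowerbound} collapses to
\[
\forb(m,3,F(p-q,p,p,p-q)) \geq \forb(m,3,p\cdot K_2) - q\binom{r_1+1}{2} - q\binom{r_2+1}{2} = \forb(m,3,p\cdot K_2) - qN_r,
\]
which is precisely the claimed inequality.

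There is no real obstacle here; the corollary is a two-line consequence of Proposition~\ref{prop:lowerbound} once one recognizes that the balanced choice $(r_1,r_2) = (\lceil r/2\rceil, \lfloor r/2\rfloor)$ gives the sum $N_r$ and that the $\min/\max$ distinction becomes vacuous when $q_0 = q_1$. The only thing worth flagging is that this balanced split is in fact the optimum of $\binom{r_1+1}{2} + \binom{r_2+1}{2}$ over $r_1 + r_2 = r$ (by convexity of $x \mapsto \binom{x+1}{2}$), which is already observed in the discussion following Proposition~\ref{prop:lowerbound}, so no additional justification is needed.
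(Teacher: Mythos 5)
Your proposal is correct and matches the paper's proof exactly: the paper also derives the corollary by applying Proposition~\ref{prop:lowerbound} with $q_0=q_1=q$ and $r_1=\ceil{r/2}$, $r_2=\floor{r/2}$. Your additional observation that this balanced split is optimal by convexity is a nice sanity check, though not needed for the corollary itself.
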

\begin{proof}
Apply Proposition \ref{prop:lowerbound} with $q_0=q_1=q$ and $r_1=\ceil{\frac r2}, r_2=\floor{\frac r2}$.
\end{proof}

Let us call a pair $(i,m+1-j)$ of rows \emph{weakly fixed} if, using the notation of Lemma \ref{lemma:correspondence}, we have $a_i+b_j=r+1-i-j$.

\begin{proof}[Proof of Proposition \ref{prop:notalwaystrue}]
From assumption, $d=q-(\ceil{\frac r2}+1)(p-1-2^{r-1})>0$. We may assume $r\geq 2$ so $\floor{\frac r2}>0$, as otherwise the result follows directly from Corollary \ref{cor:lowerbound}. Consider $\binom{\ceil{\frac r2}+1}2$ 0-non-edges, which are $(i,j)$ for all $1\leq i<j\leq\ceil{\frac r2}+1$, and $\binom{\floor{\frac r2}}2=\binom{\floor{\frac r2}+1}2-\floor{\frac r2}$ 1-non-edges, which are $(i,j)$ for all $m-\floor{\frac r2}+1\leq i<j\leq m$. This corresponds to $r_1=\ceil{\frac r2}$ and $r_2=\floor{\frac r2}-1$ in the language of Proposition \ref{prop:lowerbound}, though as $r_1+r_2=r-1<r$, not every scarce pair is fixed. It can be checked that the scarce pairs $(i,m+1-j)$ are weakly fixed for all $i\in[\ceil{\frac r2}+1]$ and $j\in[\floor{\frac{r}2}]$, while all other scarce pairs are fixed. This means that we used $N_r-\floor{\frac r2}$ non-edges to fix $\binom{r+1}2-\floor{\frac r2}(\ceil{\frac{r}2}+1)$ scarce pairs, and leave the remaining $\floor{\frac r2}(\ceil{\frac{r}2}+1)$ scarce pairs weakly fixed. Therefore, $\forb(m,3,F(p-q,p,p,p-q))$ is at least the number of columns with 0 or 1 mark, which by Lemma \ref{lemma:correspondence} and Lemma \ref{lemma:0mark} is  
\begin{align*}
&\phantom{{}={}}2^m+m2^{m-1}+(p-1)\binom{m}{2}-q\left(N_r-\floor{\frac r2}\right)-\floor{\frac r2}\left(\ceil{\frac{r}2}+1\right)(p-1-2^{r-1})\\
&=\forb(m,3,p\cdot K_2)-qN_r+\floor{\frac r2}\left(q-\left(\ceil{\frac{r}2}+1\right)(p-1-2^{r-1})\right)\\
&=\forb(m,3,p\cdot K_2)-qN_r+\floor{\frac r2}d,
\end{align*}
as claimed.
\end{proof}
\subsubsection{Upper bounds}
\begin{lemma}\label{lemma:M-N}
If $m\geq 2N_r+2$, and $N$ non-edges are used to fix $M$ scarce pairs, then \[\max (M-N)=\binom{r+1}2-N_r.\]
\end{lemma}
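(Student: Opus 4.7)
The lower bound $\max(M-N)\ge\binom{r+1}{2}-N_r$ follows immediately from the construction in the proof of Proposition~\ref{prop:lowerbound} specialised to $r_1=\ceil{r/2}$ and $r_2=\floor{r/2}$: that configuration places exactly $\binom{r_1+1}{2}+\binom{r_2+1}{2}=N_r$ non-edges in total and fixes all $\binom{r+1}{2}$ scarce pairs. For the upper bound, given any valid configuration, set $u_i=a_i$ and $v_j=b_j$ in the notation of Lemma~\ref{lemma:correspondence}. Each 0-non-edge contributes $+1$ to exactly one $u_i$ (via its smaller endpoint), so $\sum_{i=1}^r u_i\le N_0$; similarly $\sum_{j=1}^r v_j\le N_1$, whence $N\ge\sum u+\sum v$. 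A scarce pair indexed by $(i,j)\in T:=\{(i,j)\in[r]^2:\,i+j\le r+1\}$ is fixed iff $u_i+v_j\ge r+2-i-j$, so it suffices to establish
\begin{equation*}
\sum_{i=1}^r u_i+\sum_{j=1}^r v_j+\bigl|\{(i,j)\in T:\,u_i+v_j<r+2-i-j\}\bigr|\ \ge\ N_r\tag{$\ast$}
\end{equation*}
for every pair of non-negative integer sequences; this yields $M-N\le M-\sum u-\sum v\le\binom{r+1}{2}-N_r$.

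Inequality $(\ast)$ is a minimum-cost partial covering problem on the triangular bipartite region $T$ with pair weights $w(i,j)=r+2-i-j$: one pays $1$ per unit of $u_i$ or $v_j$ and $1$ per uncovered pair. The canonical solution $u_i=\max(0,r_1+1-i)$, $v_j=\max(0,r_2+1-j)$ (with $r_1=\ceil{r/2}$) covers all of $T$ at total cost $N_r=\floor{(r+1)^2/4}$, matching the weight of the diagonal matching $\{(i,i):\,1\le i\le\floor{(r+1)/2}\}\subset T$, which is a maximum-weight matching in $T$ and witnesses the LP lower bound for the ``full coverage'' version of $(\ast)$. The plan is to prove $(\ast)$ by induction on $r$ using the identity $N_r=N_{r-2}+r$: peel off both row $1$ and column $1$ simultaneously; after the shift $(i,j)\mapsto(i-1,j-1)$ the remaining pairs form exactly a $T_{r-2}$-instance with the shifted weights, so by the inductive hypothesis they contribute at least $N_{r-2}$ to $(\ast)$. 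The heaviest pair $(1,1)$ has weight $r$: if it is fixed, $u_1+v_1\ge r$ absorbs precisely the increment $N_r-N_{r-2}=r$; if it is unfixed, the $+1$ it contributes to the uncovered count takes the place of one unit of this increment, and a bookkeeping argument on the remaining cross-pairs $(1,j)$ and $(i,1)$ supplies the remaining $r-1$.

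The main obstacle is that the LP relaxation of $(\ast)$ has a strict integrality gap. Already for $r=3$, the fractional primal $u=(1,1,0),\,v=(1,0,0)$ together with $z_{11}=\tfrac{1}{3},\,z_{12}=\tfrac{1}{2}$ (and all other $z_{ij}=0$) attains value $\tfrac{23}{6}$, matched by the dual assignment $y_{11}=\tfrac{1}{3},\,y_{12}=y_{21}=y_{22}=\tfrac{1}{2},\,y_{13}=y_{31}=\tfrac{1}{6}$; this is strictly less than $N_3=4$. Consequently LP duality alone cannot close the gap, and the proof of $(\ast)$ must exploit the integrality of $u$ and $v$ in an essential way --- for example through the cross-pair bookkeeping of the inductive step above, or by directly tracking the integer level sets $\{i:u_i\ge k\}$ and $\{j:v_j\ge k\}$ and arguing that each level contributes an integer unit to $\sum u+\sum v+|U|$.
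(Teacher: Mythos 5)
Your lower bound is correct and identical to the paper's (reuse the construction of Proposition~\ref{prop:lowerbound} with $r_1=\ceil{r/2}$, $r_2=\floor{r/2}$), and your reformulation of the upper bound as the inequality $(\ast)$ is exactly the triangular-array problem the paper works with. Your observation that the natural LP relaxation of $(\ast)$ has a strict integrality gap at $r=3$ (primal value $23/6<4=N_3$) is correct and is a genuinely useful sanity check: it rules out the temptation to prove the upper bound by LP duality alone. So far this matches the paper.

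The gap is in the inductive step, and it is not merely that the ``bookkeeping argument'' is unwritten --- the step as you have described it is false. You propose to split $(\ast)$ as
$(\sum_{i\ge2}u_i+\sum_{j\ge2}v_j+|\text{uncovered in }T_{r-2}|)+(u_1+v_1+|\text{uncovered in row }1\text{ or column }1|)$
and argue that the first bracket is at least $N_{r-2}$ by induction while the second bracket is at least $r=N_r-N_{r-2}$. The first part is fine, but the second bracket can be as small as $1$. For instance, take $u_1=v_1=0$ and make $a_i,b_j$ large for $i,j\ge2$ so that every cross-pair $(1,j)$ and $(i,1)$ is covered; then only $(1,1)$ is uncovered in row~$1$ and column~$1$, and the second bracket equals $0+0+1$, far below $r$. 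The total $(\ast)$ still holds, but because the first bracket is then much larger than $N_{r-2}$ --- the deficit in the peel is compensated by excess in the subtriangle, which your additive split cannot see. The paper avoids this by abandoning the inductive split in exactly this hard case: when the covered count in row~$1$/column~$1$ exceeds $a_1+b_1$ by at least $r$, it lower-bounds $\sum_{i\ge2}b_i\ge\binom{x-a_1+1}{2}$ and $\sum_{i\ge2}a_i\ge\binom{y-b_1+1}{2}$ and invokes convexity to conclude $N\ge N_r$ outright, which finishes regardless of $M$. That convexity step is the heart of the proof and is absent from your proposal. If you want to salvage the clean peel-row-and-column induction, you would need a charging scheme that transfers surplus from the subtriangle term to the peel term; as written, the claim that the cross-pairs ``supply the remaining $r-1$'' does not hold.
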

\begin{proof}
Since $M\leq\binom{r+1}2$, we may assume that $N\leq N_r$, so that $m\geq 2N+2$. For every $i\in [r]$, let $a_i$ be the number of 0-non-edges $(i,k)$ with $i<k$, and let $b_i$ be the number of 1-non-edges $(k,m+1-i)$ with $k<m+1-i$. By definition, for every $i\in[r]$ and $j\in[r+1-i]$, the scarce pair $(i,m+1-j)$ is fixed if and only if $2^{i+j-2+a_i+b_j}\geq p-1$, which happens if and only if $a_i+b_j\geq r+2-i-j$.

Consider the construction in the proof of Proposition~\ref{prop:lowerbound} with $r_1=\ceil{\frac r2}$ and $r_2=\floor{\frac r2}$. Here, $N_r=\binom{\ceil{\frac r2}+1}2+\binom{\floor{\frac r2}+1}2$ non-edges are used to fix all $\binom{r+1}2$ scarce pairs, so $M-N$ can achieve the value $\binom{r+1}2-N_r$. 

To show $\binom{r+1}2-N_r$ is the best possible, consider the following equivalent problem on the triangular array $T_r$ of size $r$, whose $(i,j)$ entry is $T_r(i,j)=r+2-i-j$ for all $i\in[r]$ and $j\in[r+1-i]$.
\begin{center}
$\begin{matrix}
r & r-1 & \cdots & \cdots & 1\\
\vdots & \vdots & \iddots & \iddots & \\
3 & 2 & 1\\
2 & 1\\
1\\
\end{matrix}$
\end{center}
For every $i\in [r]$, a row operation on row $i$ reduces every entry in row $i$ by 1, and a column operation on column $i$ reduces every entry in column $i$ by 1. If we perform $a_i$ row operations on row $i$ and $b_i$ column operations on column $i$ for all $i\in [r]$, then after all of these operations, the $(i,j)$ entry of $T_r$ is non-positive if and only if $a_i+b_j\geq r+2-i-j$, and so if and only if the scarce pair $(i,m+1-j)$ is fixed. Therefore, it suffices to show that if $N$ row or column operations turn $M$ entries in $T_r$ non-positive, then $M-N\leq\binom{r+1}2-N_r$.

To this end, we use induction on $r$ to show $M-N\leq\binom{r+1}2-N_r=\floor{\frac{r^2}{4}}$. The $r=1,2$ cases can be checked easily, so we assume $r\geq 3$. 

For each $i\in[r]$, let $a_i$ and $b_i$ be the number of operations performed on row $i$ and column $i$, respectively. Observe that we may assume $a_r=0$, as otherwise reducing $a_r$ by 1 and increasing $b_1$ by 1 does not decrease $M$ and leaves $N$ unchanged. Similarly, we may assume $b_r=0$.

Let $M_1$ be the number of non-positive integers in the first row or column of $T_r$ after $N$ operations, and let $M'=M-M_1$. Let $N'=\sum_{i=2}^{r-1}a_i+\sum_{i=2}^{r-1}b_j$, so that $N-N'=a_1+a_r+b_1+b_r=a_1+b_1$. From the induction hypothesis, $M'-N'\leq\floor{\frac{(r-2)^2}{4}}$, so we are done if $M_1-a_1-b_1\leq \floor{\frac{r^2}{4}}-\floor{\frac{(r-2)^2}{4}}=r-1$.

If the top left entry of $T_r$, which is $r$, is turned non-positive, then $a_1+b_1\geq r$, and so $M_1-a_1-b_1\leq2r-1-r=r-1$ and we are done.

If the top left entry of $T_r$ remains positive, let $x$ be the number of non-positive entries in row 1 and let $y$ be the number of non-positive entries in column 1, so that $x+y=M_1$. If $x\leq a_1$, then $y-b_1\leq y\leq r-1$, so $M_1-a_1-b_1=x-a_1+y-b_1\leq r-1$ and we are done. Similarly, we are done if $y\leq b_1$, so we assume neither is true. 

For each $2\leq i\leq r$, the $(1,i)$ entry of $T_r$ is non-positive after the operations if and only if $a_1+b_i\geq r+1-i$. Since $x$ of these entries are non-positive, we have $\sum_{i=2}^{r}b_i\geq\sum_{i=r-x+1}^r\max\{r+1-i-a_1,0\}=\sum_{i=1}^x\max\{i-a_1,0\}=\sum_{k=1}^{x-a_1}k=\binom{x-a_1+1}{2}$. Similarly, we have $\sum_{i=2}^{r}a_i\geq\binom{y-b_1+1}{2}$. Like above, we may assume $M_1-a_1-b_1=x-a_1+y-b_1\geq r$, as otherwise we are done. By convexity, $N\geq\sum_{i=2}^{r}a_i+\sum_{i=2}^{r}b_i\geq\binom{\floor{\frac{r+2}{2}}}2+\binom{\ceil{\frac{r+2}{2}}}2=N_r$, so $M-N\leq \binom{r+1}2-N_r=\floor{\frac{r^2}{4}}$, as required.
\end{proof}

\begin{lemma}\label{lemma:faraway}
If $m\geq 2N_r+2$ and $p\geq2^{r-1}+2q+1$, then $$\forb(m,3,F(p-q,p,p,p-q))=\forb(m,3,p\cdot K_2)-qN_r.$$     
\end{lemma}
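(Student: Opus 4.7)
The plan is to establish the upper bound matching Corollary~\ref{cor:lowerbound}'s lower bound $\forb(m,3,F(p-q,p,p,p-q))\ge\forb(m,3,p\cdot K_2)-qN_r$. Take $A\in\ext(m,3,F(p-q,p,p,p-q))$ with standard decomposition $A=[B\mid C]$. The lower bound combined with $|C|\le(p-1)\binom{m}{2}$ forces $|B|>2^m+m2^{m-1}-2^{m-3}$, so Lemma~\ref{transitive} makes $T$ transitive and after permuting rows we may assume no pair $1\le i<j\le m$ is associated with $\begin{bmatrix}1\\0\end{bmatrix}$. Let $N$ be the total number of non-edges. If $N\ge N_r$, the bound follows immediately from $|A|\le|B|+|C|\le 2^m+m2^{m-1}+(p-1)\binom{m}{2}-qN$, using that each non-edge cuts the mark capacity by $q$; so I assume $N<N_r$ below.

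The crucial structural step is that in this regime every non-edge lies at an abundant pair. By Lemma~\ref{lemma:nonedgelocation} every non-edge $(i,k)$ lies in the first or last $N+1\le N_r+1$ rows, so $k-i\le N_r$, whereas a scarce pair $(i,k)$ requires $k-i\ge m-r$; since $N_r\ge r-1$ is easily verified from the definition, the condition $m\ge 2N_r+2$ yields $m-r>N_r$. Hence no non-edge can be scarce, every scarce pair is an edge, and Lemma~\ref{lemma:correspondence} provides the structural bound $c_{ij}=2^{i+j-2+a_i+b_j}$ on the number of 1-mark columns at scarce pair $(i,m+1-j)$. Writing $M$ for the number of fixed scarce pairs and $U=\binom{r+1}{2}-M$, Lemma~\ref{lemma:M-N} gives $U\ge N_r-N$. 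At every unfixed scarce pair $c_{ij}$ is a power of $2$ strictly less than $p-1$, and since $p-1>2^{r-1}$ this forces $c_{ij}\le 2^{r-1}$; the hypothesis $p\ge 2^{r-1}+2q+1$ then yields $p-1-c_{ij}\ge 2q$.

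Finally, I bound $|C|$ by a linear-programming argument. Let $M_1$ and $M_{\ge 2}$ denote the numbers of $1$-mark and multi-mark columns, and $\mu_{ij}$, $\mu_{ij}^1$ the number of columns with a mark or unique mark at pair $(i,j)$. Summing mark capacities gives $M_1+2M_{\ge 2}\le\sum\mu_{ij}\le(p-1)\binom{m}{2}-qN$, while the structural bounds $\mu_{ij}^1\le c_{ij}$ at unfixed scarce pairs and $\mu_{ij}^1\le\mathrm{cap}_{ij}$ elsewhere sum to $M_1\le(p-1)\binom{m}{2}-qN-L$ with $L:=\sum_{\text{sc.\ unfix.}}(p-1-c_{ij})\ge 2qU$. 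Therefore $|C|\le\frac{1}{2}\bigl(M_1+\sum\mu_{ij}\bigr)\le(p-1)\binom{m}{2}-qN-L/2$, and since $L/2\ge qU\ge q(N_r-N)$ this collapses to $|C|\le(p-1)\binom{m}{2}-qN_r$; combining with $|B|\le 2^m+m2^{m-1}$ completes the proof. The main obstacle I anticipate is the structural claim that no non-edge is scarce, which hinges on the precise arithmetic $m-r>N_r$; without it one would need to handle scarce non-edges whose own mark capacity is $p-q-1$ rather than $p-1$, complicating the LP accounting. Once that simplification is in place, the LP analysis above is routine.
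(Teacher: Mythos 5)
Your proof is correct and takes essentially the same route as the paper's: both apply Lemma~\ref{transitive} to make $T$ transitive, use Lemma~\ref{lemma:correspondence}'s power-of-two count $c_{ij}$ of $1$-mark columns at unfixed scarce pairs together with the arithmetic $p-1-c_{ij}\geq 2q$, and invoke Lemma~\ref{lemma:M-N} to close the counting, with your LP-style consolidation $|C|\leq\tfrac12(M_1+\sum\mu_{ij})$ algebraically identical to the paper's bound on the number of $1$-mark plus multi-mark columns. The explicit split into $N\geq N_r$ versus $N<N_r$, and the verification that no non-edge can be scarce in the latter case, is a small rigor-improving refinement (Lemma~\ref{lemma:correspondence} formally requires $m\geq 2N+2$, which the paper leaves implicit) rather than a genuinely different argument.
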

\begin{proof}
Suppose $N$ non-edges are used to fix $M$ scarce pairs. Let $\mathcal{P}$ be the set of scarce pairs $(i,m+1-j)$ that remained unfixed. For every $(i,m+1-j)\in\mathcal{P}$, let the $(i,j)$ entry of $T_r$ after the $N$ corresponding row or columns operations be $a_{i,j}$, so that by Lemma \ref{lemma:correspondence} there are $n_{i,j}:=2^{r-a_{i,j}}$ possible 1-mark columns with their unique marks at $(i,m+1-j)$. Since $(i,m+1-j)$ is not fixed, $a_{i,j}\geq1$ and so $p-1-n_{i,j}\geq p-1-2^{r-1}\geq2q$. 

It follows that the maximum number of 1-mark columns plus columns with at least two marks is at most
\begin{align*}
&\phantom{{}={}}(p-1)\left(\binom{m}{2}-\left(\binom{r+1}{2}-M\right)\right)-qN+\sum_{(i,m+1-j)\in\mathcal{P}}n_{i,j}+\frac12\sum_{(i,m+1-j)\in\mathcal{P}}(p-1-n_{i,j})\\
&=(p-1)\binom{m}{2}-qN-\frac12\sum_{(i,m+1-j)\in\mathcal{P}}(p-1-n_{i,j})\\
&\leq(p-1)\binom{m}{2}-qN-q\left(\binom{r+1}{2}-M\right)\\
&\leq(p-1)\binom{m}{2}-qN_r,
\end{align*}
where the last inequality follows from Lemma \ref{lemma:M-N}. 

There are at most $2^m+m2^{m-1}$ columns with no mark, as they together form a matrix in $\Avoid(m,3,K_2)$. Hence, the total number of columns is at most $2^m+m2^{m-1}+(p-1)\binom{m}{2}-qN_r=\forb(m,3,p\cdot K_2)-qN_r$. This combined with Corollary \ref{cor:lowerbound} completes the proof.
\end{proof}

\begin{lemma}\label{lemma:notfixed}
Let $m\geq 2N_r+2$, $qN_r\leq2^{r-3}$, and $A\in\Avoid(m,3,F(p-q,p,p,p-q))$. Suppose there is at least one scarce pair that is neither fixed nor weakly fixed, then $A$ has at most $\forb(m,3,p\cdot K_2)-qN_r$ columns.
\end{lemma}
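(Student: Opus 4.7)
The plan is to control $|A|$ via the standard decomposition $A=[B\mid C]$ together with a mark-counting argument. Since $|B|\leq 2^m+m2^{m-1}$, it suffices to show that $|C|\leq(p-1)\binom{m}{2}-qN_r$. Let $a_j$ denote the number of columns of $A$ with exactly $j$ marks, let $M_1=\sum_{j\geq 1}ja_j$, and write $N$ for the number of non-edges and $M$ for the number of fixed scarce pairs. Since each non-edge hosts at most $p-q-1$ marks while every other pair hosts at most $p-1$, one has $M_1\leq(p-1)\binom{m}{2}-qN$. The bookkeeping $|C|=\sum_{j\geq 1}a_j\leq a_1+\tfrac12\sum_{j\geq 2}ja_j\leq\tfrac12(a_1+M_1)$ then reduces the task to an upper bound on $a_1$.

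For each pair that is neither a non-edge nor an unfixed scarce pair, at most $p-1$ columns of $A$ can carry their unique mark there; each non-edge admits at most $p-q-1$; and each unfixed scarce pair $(i,m+1-j)\in\mathcal{P}$ admits at most $n_{i,j}:=2^{i+j-2+a_i+b_j}$ by Lemma~\ref{lemma:correspondence}. Summing over pairs and simplifying,
\[a_1\leq(p-1)\binom{m}{2}-qN-\sum_{(i,m+1-j)\in\mathcal{P}}(p-1-n_{i,j}).\]
Combining with the bound on $M_1$, the desired inequality $|C|\leq(p-1)\binom{m}{2}-qN_r$ reduces to the clean numerical statement
\[qN+\tfrac12\sum_{(i,m+1-j)\in\mathcal{P}}(p-1-n_{i,j})\geq qN_r.\]

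To exploit the assumption that some scarce pair is neither fixed nor weakly fixed, I split $\mathcal{P}=\mathcal{P}_1\sqcup\mathcal{P}_2$, where $\mathcal{P}_1$ collects the weakly fixed pairs (for which $n_{i,j}=2^{r-1}$) and $\mathcal{P}_2$ collects the rest (for which $n_{i,j}\leq 2^{r-2}$); by hypothesis $|\mathcal{P}_2|\geq 1$. Writing $p-1-2^{r-2}=(p-1-2^{r-1})+2^{r-2}$ gives
\[\sum_{\mathcal{P}}(p-1-n_{i,j})\geq|\mathcal{P}|(p-1-2^{r-1})+|\mathcal{P}_2|\cdot 2^{r-2}\geq|\mathcal{P}|(p-1-2^{r-1})+2^{r-2}.\]
Lemma~\ref{lemma:M-N} supplies $N\geq\max\{0,N_r-|\mathcal{P}|\}$. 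A short case split on whether $|\mathcal{P}|\leq N_r$ (using $qN\geq q(N_r-|\mathcal{P}|)$) or $|\mathcal{P}|>N_r$ (using $qN\geq 0$) reduces the remaining inequality to $q|\mathcal{P}|\leq qN_r\leq 2^{r-3}$, which is exactly the hypothesis $qN_r\leq 2^{r-3}$.

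The main obstacle is the regime where $p-1$ is close to $2^{r-1}$, so that $p-1-2^{r-1}$ is much smaller than $2q$ and the direct argument of Lemma~\ref{lemma:faraway} fails: the weakly fixed pairs in $\mathcal{P}_1$ cannot individually absorb the full $2q$ deficit needed per scarce pair to reach $qN_r$. The extra slack of $2^{r-3}$ guaranteed by $|\mathcal{P}_2|\geq 1$, combined with the hypothesis $qN_r\leq 2^{r-3}$, is precisely what makes the final case analysis close.
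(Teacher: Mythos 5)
Your proposal is correct and follows essentially the same mark-counting strategy the paper uses: bound $|C|\leq\tfrac12(a_1+M_1)$ to get $|C|\leq(p-1)\binom{m}{2}-qN-\tfrac12\sum_{\mathcal P}(p-1-n_{i,j})$, then observe that a scarce pair that is neither fixed nor weakly fixed contributes more than $2^{r-2}$ to the sum, which combined with $qN_r\leq2^{r-3}$ closes the argument. The only difference is that you invoke Lemma~\ref{lemma:M-N} and run a case split on $|\mathcal{P}|$ versus $N_r$, whereas the paper notes that $N\geq0$ already suffices, making that extra machinery unnecessary.
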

\begin{proof}
Let $\mathcal{P}_1$ be the set of scarce pairs that are weakly fixed and let $\mathcal{P}_2$ be the set of scarce pairs that are neither fixed nor weakly fixed. For every $(i,m+1-j)\in\mathcal{P}:=\mathcal{P}_1\cup\mathcal{P}_2$, let the $(i,j)$ entry of $T_r$ after the $N$ operations be $a_{i,j}$, so $a_{i,j}=1$ if $(i,m+1-j)\in\mathcal{P}_1$, and $a_{i,j}\geq2$ if $(i,m+1-j)\in\mathcal{P}_2$. From Lemma \ref{lemma:correspondence}, there are $n_{i,j}:=2^{r-a_{i,j}}$ possible 1-mark columns with their unique marks at $(i,m+1-j)$.

Therefore, the maximum number of columns is at most, like in the proof of Lemma \ref{lemma:faraway},
\begin{align*}
&\phantom{{}={}}2^m+m2^{m-1}+(p-1)\binom{m}{2}-qN-\frac12\sum_{(i,j)\in\mathcal{P}}(p-1-n_{i,j})\\
&<2^m+m2^{m-1}+(p-1)\binom{m}{2}-qN-\frac12\cdot2^{r-2}\\
&\leq2^m+m2^{m-1}+(p-1)\binom{m}{2}-qN_r\\
&=\forb(m,3,p\cdot K_2)-qN_r,
\end{align*}
as required, where the first inequality uses $\emptyset\not=\mathcal{P}_2\subset\mathcal{P}$ and $p-1-n_{i,j}\geq p-1-2^{r-2}>2^{r-2}$ for all $(i,m+1-j)\in\mathcal{P}_2$, while the second inequality holds because $\frac12\cdot2^{r-2}\geq qN_r$.
\end{proof}

\begin{lemma}\label{lemma:LP}
If all scarce pairs are fixed or weakly fixed by $N$ non-edges, then $N_r-N\leq\ceil{\frac r2}$. 
\end{lemma}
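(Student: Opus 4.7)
The plan is to reinterpret the hypothesis as a system of linear constraints on the row/column operation counts in the triangular array $T_r$ (using the reformulation from the proof of Lemma~\ref{lemma:M-N}), and then obtain the lower bound on $N$ essentially by LP duality, with the ``diagonal matching'' of $T_r$ serving as the dual certificate.

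First, following Lemma~\ref{lemma:M-N}, let $a_i$ be the number of row operations on row $i$ of $T_r$ and $b_j$ the number of column operations on column $j$. Each non-edge contributes $+1$ to at most one of these, so $N\geq \sum_{i=1}^{r}a_i+\sum_{j=1}^{r}b_j$. Next, the condition that the scarce pair $(i,m+1-j)$ is fixed or weakly fixed is exactly that the $(i,j)$ entry of $T_r$ becomes $\leq 1$ after the operations, that is, $a_i+b_j\geq r+1-i-j$. This constraint is non-trivial precisely when $i+j\leq r$.

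The key step is to extract only the \emph{diagonal} constraints $a_i+b_i\geq r+1-2i$ for $i=1,\dots,\lfloor r/2\rfloor$. Since all $a_i,b_j\geq 0$, summing these gives
\[
N \;\geq\; \sum_{i=1}^{r}a_i+\sum_{j=1}^{r}b_j \;\geq\; \sum_{i=1}^{\lfloor r/2\rfloor}(a_i+b_i) \;\geq\; \sum_{i=1}^{\lfloor r/2\rfloor}(r+1-2i) \;=\; \left\lfloor \tfrac{r^2}{4}\right\rfloor.
\]
I would close by verifying the identity $\lfloor r^2/4\rfloor = N_r-\lceil r/2\rceil$: unfolding $N_r=\binom{\lceil r/2\rceil+1}{2}+\binom{\lfloor r/2\rfloor+1}{2}$ and splitting on the parity of $r$ reduces this to a one-line calculation (for $r=2k$ both sides equal $k^2$, and for $r=2k+1$ both equal $k(k+1)$). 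Rearranging yields $N_r-N\leq \lceil r/2\rceil$, as required.

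No serious obstacle is anticipated: once one recognises that the diagonal of $T_r$ already provides an optimal matching for the dual of the natural covering LP, the argument reduces to summing $\lfloor r/2\rfloor$ of the given inequalities plus the routine identity above. (That the diagonal really is optimal, and hence that the bound in the lemma is tight, will be used elsewhere but is not needed for the proof itself.)
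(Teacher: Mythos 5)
Your proof is correct, and it takes a genuinely different (and more direct) route than the one the paper ends up using. The paper deduces Lemma~\ref{lemma:LP} by reducing to Lemma~\ref{lemma:M-N}: after discarding operations on the last row and column, the same $N$ operations turn all entries of the smaller array $T_{r-1}$ non-positive, whence $N\geq N_{r-1}=N_r-\ceil{\tfrac r2}$ follows from the inductive bound proved in Lemma~\ref{lemma:M-N}. You instead bound $N$ from below in one step by extracting the $\lfloor r/2\rfloor$ diagonal constraints $a_i+b_i\geq r+1-2i$ and summing them, using only $N\geq\sum_i a_i+\sum_j b_j$ and $a_i,b_j\geq0$; this is precisely an LP-duality argument with the diagonal of $T_r$ serving as an integral dual certificate. (In fact the source contains a commented-out proof of exactly this flavour, which the authors replaced with the reduction to Lemma~\ref{lemma:M-N}.) Your route is shorter and self-contained modulo the $T_r$ reformulation, whereas the paper's route reuses the already-established and harder Lemma~\ref{lemma:M-N}; both are valid, and the numeric identity $\lfloor r^2/4\rfloor = N_r-\lceil r/2\rceil$ you invoke checks out on both parities. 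One small remark for completeness: you should note that each $0$-non-edge contributes to at most one $a_i$ (via its smaller endpoint) and each $1$-non-edge to at most one $b_j$ (via its larger endpoint), and that these two classes are disjoint, which is what justifies $N\geq\sum_{i=1}^r a_i+\sum_{j=1}^r b_j$ even without assuming that every non-edge is captured by some $a_i$ or $b_j$.
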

\begin{proof}
In the language of Lemma \ref{lemma:M-N}, all entries in the triangular array $T_r$ are at most 1 after $N$ row or column operations, and we want to show $N\geq N_r-\ceil{\frac r2}$. We may assume that all operations are performed on the first $r-1$ rows and columns, as the entries in the last row and the last column are already at most 1. Now, apply this set of operations on the first $r-1$ rows and columns of $T_r$ to the triangular array $T_{r-1}$ instead, and note that they turn all $\binom r2$ entries in $T_{r-1}$ non-positive. Thus, by Lemma \ref{lemma:M-N} we have $\binom r2-N\leq\binom r2-N_{r-1}$, and so $N\geq N_{r-1}=N_r-\ceil{\frac r2}$.
\end{proof}

For $i\in[r]$ and $j\in[r+1-i]$, we say the $(i,j)$ entry of $T_r$ is fixed (or weakly fixed) if the corresponding scarce pair $(i,m+1-j)$ is fixed (or weakly fixed).
\begin{lemma}\label{lemma:induction}
If all scarce pairs are fixed or weakly fixed by $N_r-t$ non-edges for some $0\leq t\leq\ceil{\frac r2}$, then there are at least $t(\floor{\frac r2}+1)$ weakly fixed pairs. 
\end{lemma}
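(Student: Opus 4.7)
The plan is to change variables via $s_i = a_i + i$ and $t_j = b_j + j$ for $i, j \in [r]$. Then the condition that every pair $(i, j)$ with $i + j \leq r + 1$ is fixed or weakly fixed translates to $s_i + t_j \geq r + 1$ for all $i, j \in [r]$ (the inequality being automatic when $i + j > r + 1$ since $s_i + t_j \geq i + j$). A pair is weakly fixed iff $s_i + t_j = r + 1$, the total number of operations is $N = \sum_i s_i + \sum_j t_j - r(r+1)$, and the non-negativity $a_i, b_j \geq 0$ becomes $s_i \geq i$, $t_j \geq j$.

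Let $\sigma = \min_i s_i$ and $\tau = \min_j t_j$; necessarily $\sigma + \tau \geq r + 1$. If $\sigma + \tau > r + 1$, then no pair can be weakly fixed (since being weakly fixed forces $(s_i, t_j) = (\sigma, \tau)$, whose sum is $> r+1$), and a direct calculation using $\sum s_i \geq \min(\binom{\sigma}{2} + \binom{r+1}{2}, r\sigma)$ together with convexity yields $N \geq N_r$, so $t \leq 0$ and the claim holds vacuously. I therefore assume $\sigma + \tau = r + 1$, which forces $\sigma, \tau \leq r$. Setting $\alpha = |\{i : s_i = \sigma\}|$ and $\beta = |\{j : t_j = \tau\}|$, the number of weakly fixed pairs equals exactly $\alpha\beta$, and $s_i \geq i$ forces $\alpha \leq \sigma$ and $\beta \leq \tau$.

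I then refine the lower bound on $N$: if $\alpha < \sigma$, then $\sigma - \alpha$ of the indices in $[\sigma]$ satisfy $s_i \geq \sigma + 1$, each contributing at least one extra to $\sum s_i$ beyond the minimum $\binom{\sigma}{2} + \binom{r+1}{2}$, so $\sum s_i \geq \binom{\sigma}{2} + \binom{r+1}{2} + (\sigma - \alpha)$. The analogous bound for $t$, added and simplified using $\sigma + \tau = r + 1$, gives
\[
N \geq \binom{\sigma}{2} + \binom{\tau}{2} + (r+1) - \alpha - \beta.
\]
Substituting $t = N_r - N$ and writing $f = \floor{r/2} + 1$, the target inequality $\alpha\beta \geq tf$ rearranges to
\[
(\alpha - f)(\beta - f) \geq f\left[N_r - \binom{\sigma}{2} - \binom{\tau}{2} - (r+1) + f\right].
\]

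Parametrising $\sigma + \tau = r + 1$ by the imbalance $u$ from balanced (with $\sigma = f + u$, $\tau = f - u$ for $r$ odd, and $\sigma = f - 1 + u$, $\tau = f - u$ for $r$ even), the right-hand side collapses to $-fu^2$ and $-fu(u-1)$ respectively. The proof finishes by a case analysis: when $\alpha, \beta \leq f$ the left-hand side is non-negative and dominates; when (say) $\alpha > f$, the worst extremal choice $(\alpha, \beta) = (\sigma, 1)$ gives LHS $\geq -|u|(f-1)$, which beats the right-hand side by an elementary inequality for $|u| \geq 1$. The main obstacle is the bookkeeping in this final case analysis, especially in cleanly separating the two parities of $r$ and verifying the extremal sub-cases $\alpha = \sigma$, $\beta = 1$ (and its mirror).
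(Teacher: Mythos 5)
Your proposal is correct and takes a genuinely different route from the paper. The paper proves this lemma by induction on $r$, removing the first row and column of the triangular array $T_r$ and arguing separately about the border and the size-$(r-2)$ subtriangle, with a fairly intricate case split (depending on the parity of $r$ and a parameter $x\in\{-1,0\}$). Your argument is direct and non-inductive: the substitution $s_i=a_i+i$, $t_j=b_j+j$ converts the triangular constraint set into the rectangular one $s_i+t_j\geq r+1$ for all $i,j\in[r]$ together with $s_i\geq i$, $t_j\geq j$, and then everything hinges on two numbers $\sigma=\min s_i$, $\tau=\min t_j$ and their multiplicities $\alpha,\beta$. The key observations that weakly fixed pairs are exactly the $\alpha\beta$ pairs where both minima are attained, that $\alpha\leq\sigma$, $\beta\leq\tau$, and that $\sum s_i\geq\binom{\sigma}{2}+\binom{r+1}{2}+(\sigma-\alpha)$ (and likewise for $t$) reduce everything to a bilinear inequality in $(\alpha,\beta)$ over a box with corners at $(1,1),(1,\tau),(\sigma,1),(\sigma,\tau)$; I have checked all four corners in both parities and they all go through. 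What your approach buys is a much cleaner argument: there is no inductive hypothesis to manage and the parity split becomes a single algebraic identity ($N_r-\binom{\sigma}{2}-\binom{\tau}{2}-(r+1)+f=-fu^2$ or $-fu(u-1)$). The paper's approach is perhaps more robust to perturbations of the statement since it only ever touches the outermost row and column. One small slip worth noting: the inequality $\sum s_i\geq\min\bigl(\binom{\sigma}{2}+\binom{r+1}{2},\,r\sigma\bigr)$ should use the appropriate one of the two expressions depending on whether $\sigma\leq r$ or $\sigma>r$ (they agree at $\sigma=r$), not the minimum; this does not affect the argument since in the main case $\sigma+\tau=r+1$ forces $\sigma,\tau\leq r$.
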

\begin{proof}
We use induction on $r$ and work equivalently in the triangular array $T_r$. The $r=1,2$ cases are easy to verify, so we assume $r\geq 3$.

For every $1\leq i\leq\ceil{\frac{r+1}2}$, let $a_i$ be the number of operations performed on row $i$ of $T_r$ and let $b_i$ be the number of operations performed on column $i$ of $T_r$. For every $1\leq i\leq\ceil{\frac{r+1}2}$, since the pair $(1,m+1-i)$ is fixed or weakly fixed, the $(1,i)$ entry of $T_r$, which was $r-i+1$ at the beginning, is now at most 1. Thus, $r-i+1-a_1-b_i\leq1$, and so $a_1+b_i\geq r-i$. In particular, $a_1+b_1\geq r-1$. Moreover, if the $(1,i)$ entry is non-positive, then $r-i+1-a_1-b_i\leq0$, and so $a_1+b_i\geq r-i+1$. Similarly, by looking at the $(i,1)$ entry of $T_r$ for every $1\leq i\leq\ceil{\frac{r+1}2}$, we see that $a_i+b_1\geq r-i$, and $a_i+b_1\geq r-i+1$ if this entry is non-positive.

Consider the subtriangle of size $r-2$ obtained by removing the first row and column from $T_r$. Let $N_{r-2}-S$ be the number of operations performed in this subtriangle. Then $r-1\leq a_1+b_1=N_r-t-(N_{r-2}-S)$, so $S\geq r-1-N_r+N_{r-2}+t=t-1$. Also, since all entries in this subtriangle are fixed or weakly fixed, we have $N_{r-2}-S\geq N_{r-2}-\ceil{\frac {r-2}2}=N_{r-2}-\ceil{\frac{r}2}+1$ by Lemma \ref{lemma:LP}. It follows that $S\leq\ceil{\frac r2}-1$, and we can apply the induction hypothesis to see that there are at least $S(\floor{\frac{r-2}2}+1)=S\floor{\frac r2}$ weakly fixed entries in this subtriangle.

If $S\geq t+1$, then as $S\leq\ceil{\frac r2}-1$ from above, we have $t\leq\ceil{\frac r2}-2$. From the induction hypothesis, the subtriangle contains at least $(t+1)\floor{\frac r2}\geq t(\floor{\frac r2}+1)$ weakly fixed entries, and hence so does the original triangle, as required. Otherwise, $S=t+x$ for $x=-1$ or 0. Then, we have $a_1+b_1=N_r-t-(N_{r-2}-S)=r+x$. Therefore, 
\begin{align*}
\sum_{i=2}^{\ceil{\frac {r+1}2}}(a_i+b_1)+\sum_{i=2}^{\ceil{\frac {r+1}2}}(a_1+b_i)&\leq\ceil{\frac {r-1}2}(r+x)+(N_{r-2}-S)\\
&=\begin{cases}
    \frac34r^2+\frac12(x-1)r-x-t, \text{ if }r\text{ is even},\\
    \frac34r^2+\frac12(x-2)r+\frac14(1-6x-4t), \text{ if }r\text{ is odd}.
\end{cases}
\end{align*}
On the other hand, \[\sum_{i=2}^{\ceil{\frac{r+1}2}}(r-i+1)+\sum_{i=2}^{\ceil{\frac{r+1}2}}(r-i+1)=\begin{cases}
    \frac34r^2-\frac12r, \text{ if }r\text{ is even},\\
    \frac34r^2-r+\frac14, \text{ if }r\text{ is odd}.
\end{cases}\]

Therefore, if $r$ is even, then at least \[\frac34r^2-\frac12r-\left(\frac34r^2+\frac12(x-1)r-x-t\right)=x+t-\frac12xr\]
of the inequalities $a_i+b_1\geq r-i+1$ for $2\leq i\leq\frac r2+1$ and $a_1+b_i\geq r-i+1$ for $2\leq i\leq\frac r2+1$ must fail to hold, which leads to $x+t-\frac12xr$ weakly fixed entries in the first row or column. If $x=-1$, this is $\frac12r+t-1$, which along with the weakly fixed $(1,1)$ entry and the at least $(t-1)\frac r2$ weakly fixed entries in the subtriangle guaranteed by the induction step above gives the required $t(\frac r2+1)$ weakly fixed entries overall. If $x=0$, this is $t$, and there are at least $t\frac r2$ weakly fixed entries in the subtriangle by induction, so we have the required $t(\frac r2+1)$ overall as well. 

If $r$ is odd, then at least \[\frac34r^2-r+\frac14-\left(\frac34r^2+\frac12(x-2)r+\frac14(1-6x-4t)\right)=\frac32x+t-\frac12xr\]
of the inequalities $a_i+b_1\geq r-i+1$ for $2\leq i\leq\ceil{\frac{r+1}2}$ and $a_1+b_i\geq r-i+1$ for $2\leq i\leq\ceil{\frac{r+1}2}$ must fail to hold, leading to $\frac32x+t-\frac12xr$ weakly fixed entries in the first row or column. If $x=-1$, this is $\frac12r+t-\frac32=\floor{\frac r2}+t-1$, which along with the weakly fixed $(1,1)$ entry and the at least $(t-1)\floor{\frac r2}$ weakly fixed entries in the subtriangle guaranteed by induction gives the required $t(\floor{\frac r2}+1)$ weakly fixed entries overall. If $x=0$, this is $t$, and there are at least $t\floor{\frac r2}$ weakly fixed entries in the subtriangle by induction, so we have the required $t(\floor{\frac r2}+1)$ overall as well, finishing the proof. 
\end{proof}

\begin{proof}[Proof of Theorem \ref{thm:p-k}]
We have already seen in Lemma \ref{lemma:faraway} that equality holds if $p\geq 2^{r-1}+2q+1$. If $p\leq2^{r-1}+2q$ and $qN_r\leq2^{r-3}$, then by Lemma \ref{lemma:notfixed}, the number of columns is at most $\forb(m,3,p\cdot K_2)-qN_r$ if at least one scarce pair is neither fixed nor weakly fixed. Now assume all scarce pairs fixed or weakly fixed and $\frac12(\floor{\frac r2}+1)(p-1-2^{r-1})\geq q$. 

Suppose there are $N=N_r-t$ non-edges, and they fix $M$ pairs, then by Lemma \ref{lemma:LP} we have $t\leq\ceil{\frac r2}$. Using the notation in the proof of Lemma \ref{lemma:notfixed}, the number of columns with at least 1 mark is at most
\begin{align*}
&\phantom{{}={}}(p-1)\binom{m}{2}-qN-\frac12\sum_{(i,m+1-j)\in\mathcal{P}_1}(p-1-n_{i,j})\\
&=(p-1)\binom{m}{2}-qN-\frac12|\mathcal{P}_1|(p-1-2^{r-1})\\
&=(p-1)\binom{m}{2}-qN_r+qt-\frac12|\mathcal{P}_1|(p-1-2^{r-1}).
\end{align*}

If $t\leq 0$, then this is at most $(p-1)\binom{m}{2}-qN_r$. If $1\leq t\leq\ceil{\frac r2}$, then Lemma \ref{lemma:induction} implies that $|\mathcal{P}_1|\geq t\left(\floor{\frac r2}+1\right)$, so this is at most
\[(p-1)\binom{m}{2}-qN_r+qt-\frac12t\left(\floor{\frac r2}+1\right)(p-1-2^{r-1})\leq(p-1)\binom{m}{2}-qN_r.\]
Thus, in both cases, the maximum number of columns is at most
\begin{align*}
&\phantom{{}={}}2^m+m2^{m-1}+(p-1)\binom{m}{2}-qN_r=\forb(m,3,p\cdot K_2)-qN_r.
\end{align*}
Together with Corollary \ref{cor:lowerbound}, this completes the proof.
\end{proof}

\begin{proof}[Proof of Corollary~\ref{thm:p-1ppp-1}]
If $p=2$, then $N_r=N_0=0$. From \cite{dillon2021exponential}, $g_2=0$, so we have $\forb(m,3,2\cdot K_2)=\forb(m,3,2\cdot I_2)$ for $m\geq 2$ by Proposition \ref{prop:gpindep}. Since $\forb(m,3,2\cdot I_2)\le \forb(m,3,F(1,2,2,1))\le \forb(m,3,2\cdot K_2)$, we have $\forb(m,3,F(1,2,2,1))=\forb(m,3,2\cdot K_2)=\forb(m,3,2\cdot K_2)-N_0$. If $p=3$, then $N_r=N_1=1$. By Corollary \ref{p=3}, noting that we do not need to apply the Lemma \ref{reduction}, for $m\geq4$ we have $\forb(m,3,F(2,3,3,2))=\forb(m,3,3\cdot K_2)-1=\forb(m,3,3\cdot K_2)-N_1$. If $p=4$, then $N_r=N_2=2$. By Proposition \ref{p=4}, again without the need for Lemma \ref{reduction}, for $m\geq6$ we have $\forb(m,3,F(3,4,4,3))=\forb(m,3,4\cdot K_2)-2=\forb(m,3,4\cdot K_2)-N_2$. Therefore, the statement holds for $p=2,3,4$.

For $p\ge 5$, we have $r\geq2$. It follows from Corollary \ref{cor:lowerbound} that $\forb(m,3, F(p-1,p,p,p-1))\geq\forb(m,3,p\cdot K_2)-N_r$, so we just need to prove the reverse inequality. If all scarce pairs are fixed or weakly fixed, then as $\frac12(\floor{\frac r2}+1)(p-1-2^{r-1})\geq1$, the proof of Theorem~\ref{thm:p-k} in the case when all scarce pairs are fixed or weakly fixed shows the maximum number of columns is $\forb(m,3,p\cdot K_2)-N_r$. 

Therefore, we just need to consider the case when at least one scarce pair is neither fixed nor weakly fixed. Assume that $N$ non-edges are used and $\mathcal{P}$ is the set of scarce pairs $(i,m+1-j)$ that are not fixed. Then, as in the proof of Lemma~\ref{lemma:notfixed}, the maximum possible number of columns is \[2^m+m2^{m-1}+(p-1)\binom{m}{2}-N-\frac12\sum_{(i,m+1-j)\in\mathcal{P}}(p-1-n_{i,j}).\] This is at most $\forb(m,3,p\cdot K_2)-N_r$ if and only if 
\begin{equation}\label{eq:notfixed}
N+\frac12\sum_{(i,m+1-j)\in\mathcal{P}}(p-1-n_{i,j})\geq N_r.   
\end{equation}
Let $w_f$ denote the number of weakly fixed pairs and $n_f$ denote the number of neither fixed nor weakly fixed ones, so that $w_f+n_f=|\mathcal{P}|$. If $(i,m+1-j)$ is neither fixed nor weakly fixed, then $p-1-n_{i,j}\geq p-1-2^{r-2}\geq2^{r-2}+1$, so
\begin{equation}\label{eq:wfnf}
    N+\frac12\sum_{(i,j)\in\mathcal{P}}(p-1-n_{i,j})\geq N+\frac12 w_f+(2^{r-3}+\frac12)n_f.
\end{equation}
We will prove that for all $r$, 
\begin{equation}\label{eq:Nrupperbd}
    N+\frac12 w_f+(2^{r-3}+\frac12)n_f\geq N_r,
\end{equation}
which then implies (\ref{eq:notfixed}) and finishes the proof.

If $r\ge 7$, then $2^{r-3}\geq N_r$, and as $n_f\geq 1$, (\ref{eq:Nrupperbd}) holds. The values of $N_r$ for $2\leq r\le 6$ are as follows.
$$\begin{array}{c|c|c|c|c|c|c|}
        r &2&3&4&5&6  \\
        \hline
         N_r&2&4&6&9&12 
\end{array}$$

If $r=2$, then we are done if $N\geq 1$ as $n_f\geq1$. If $N=0$, then $w_f=2$ and (\ref{eq:Nrupperbd}) holds as well. 

For $r=3$, we need $N+\frac12 w_f+\frac32n_f\geq 4$. If $N\ge3$, then the inequality holds as $n_f\geq1$. If $N=2$, (\ref{eq:Nrupperbd}) holds as at least one of the three diagonal entries is weakly fixed, while $N\le 1$ implies $n_f\ge 2$ and $w_f\geq 2$, so (\ref{eq:Nrupperbd}) still holds. 

For $r=4$, we need $N+\frac12 w_f+\frac52n_f\geq 6$, which holds if $N\geq 4$ as $n_f\geq1$. If $N=3$, it holds as $w_f\geq 1$. If $N=1,2$, then at least two of the three entries in the upper left corner of $T_4$ $\begin{array}{cc}4&3\\3&\end{array}$ are not at least weakly fixed, so $n_f\geq 2$ and (\ref{eq:Nrupperbd}) holds. If $N=0$, $n_f=6$ and (\ref{eq:Nrupperbd}) holds. 

For $r=5$, we need $N+\frac12 w_f+\frac92n_f\geq 9$, which holds if $N\geq 5$. If $N=4$, then (\ref{eq:Nrupperbd}) holds as $w_f\geq 1$. If $N\le 3$, then $n_f\ge 2$ and (\ref{eq:Nrupperbd}) holds, as at least two of the three upper left corner entries of $T_5$ $\begin{array}{cc}
         5&4  \\
         4& 
\end{array}$ are not at least weakly fixed. 

Finally for $r=6$, we need $N+\frac12 w_f+\frac{17}2n_f\geq 12$, which holds if $N\geq 4$. If $N\le 3$, then $n_f\geq3$ and (\ref{eq:Nrupperbd}) holds as all three entries of the upper left corner of $T_6$ $\begin{array}{cc}
         6&5  \\
         5& 
\end{array}$ are not at least weakly fixed. This proves (\ref{eq:Nrupperbd}) for all $r\geq 2$ and finishes the proof.
\end{proof}

\end{document}